\theoremstyle{plain}
\newtheorem*{main-theorem}{Main Theorem}
\newtheorem{theorem}{Theorem}[subsection]
\newtheorem{lemma}{Lemma}[subsection]
\newtheorem{corollary}{Corollary}[subsection]
\newtheorem{proposition}{Proposition}[subsection]
\newtheorem*{orlov}{Orlov's Theorem}
\theoremstyle{definition}
\newtheorem{definition}{Definition}[section]
\newtheorem{example}{Example}[section]
\theoremstyle{remark}
\newtheorem{remark}{Remark}[section]
\DeclareMathOperator{\Bl}{\mathrm{Bl}}
\DeclareMathOperator{\Ext}{\mathrm{Ext}} 
\DeclareMathOperator{\Hom}{\mathrm{Hom}}
\DeclareMathOperator{\Spec}{\mathrm{Spec}}
\begin{document}

\title[Equivariant Derived Categories of Certain Hypersurfaces]{Equivariant derived
categories associated to a sum of two potentials}

\author[B. Lim]{Bronson Lim}
\address{BL: Department of Mathematics \\ California State University San
  Bernardino \\ San Bernardino, CA 92407, USA}
\email{bronson.lim@csusb.edu}
\subjclass[2010]{Primary 14F05; Secondary 14J70}
\keywords{Derived Categories, Semi-orthogonal Decompositions}

\maketitle

\begin{abstract}
	Suppose \(f,g\) are homogeneous polynomials of degree \(d\) defining smooth
	hypersurfaces \(X_f = V(f)\subset\mathbb{P}^{m-1}\) and \(X_g =
	V(g)\subset\mathbb{P}^{n-1}\). Then the sum of \(f\) and \(g\)
	defines a smooth hypersurface \(X = V(f\oplus g)\subset\mathbb{P}^{m+n-1}\)
	with an action of \(\mu_d\) scaling the \(g\) variables. Motivated by the
	work of Orlov, we construct a semi-orthogonal decomposition of the derived
	category of coherent sheaves on \([X/\mu_d]\) provided
	\(d\geq\mathrm{max}\{m,n\}\).
\end{abstract}

\section{Introduction}
\label{sec:intro}

\subsection{Semi-orthogonal decompositions in algebraic geometry}
\label{ssec:sod-ag}

To a space \(X\), i.e. a smooth and projective variety or more generally a
smooth and proper Deligne-Mumford stack, we can associate the bounded derived
category of coherent sheaves on the space, denoted \(\mathcal{D}(X)\). The
category \(\mathcal{D}(X)\) lives in the intersection between homological
algebra and algebraic geometry and has proved to be a useful tool 
when applied to algebro-geometric problems.

Of particular interest is when \(\mathcal{D}(X)\) admits a semi-orthogonal
decomposition (see Section \ref{ssec:sod} for the definition). Roughly, a
semi-orthogonal decomposition is the analogue of a group extension for
triangulated categories. If \(\mathcal{D}(X)\) admits a semi-orthogonal
decomposition, one can hope to further understand \(\mathcal{D}(X)\), or
sometimes \(X\), using the components of the decomposition. 

See the surveys \cite{bondal-orlov-sod} and \cite{bridgeland-derived} for
examples of semi-orthogonal decompositions and their uses.

\subsection{Orlov's Theorem}
\label{ssec:orlov-theorem}

Let \(k\) be an algebraically closed field of characteristic zero and \(V\) a
vector space over \(k\) of dimension \(n\). Assume \(f\in k[V]_d\) defines a smooth
hypersurface, say \(X_f=V(f)\subset\mathbb{P}(V)\). We call \(f\) a
\textit{potential}. Let \(\mathrm{HMF}^{gr}(f)\) denote the homotopy category of
graded matrix factorizations of the potential \(f\). Recall, objects of
\(\mathrm{HMF}^{gr}(f)\) are \(\mathbb{Z}/2\)-graded, curved complexes of
\(\mathbb{G}_m\)-equivariant vector bundles on \(V\) with curvature \(f\). There
is a natural differential on the space of morphisms between two matrix
factorizations. The category \(\mathrm{HMF}^{gr}(f)\) is the corresponding
homotopy category.

A relationship between \(\mathrm{HMF}^{gr}(f)\) and \(\mathcal{D}(X_f)\) was
discovered by Orlov in \cite{orlov-sing-09}. Orlov constructs two
\(\mathbb{Z}\)-indexed families of exact functors \(\Psi_i:\mathcal{D}(X_f)\to
\mathrm{HMF}^{gr}(f)\) and \(\Phi_i:\mathrm{HMF}^{gr}(f)\to \mathcal{D}(X_f)\).
If \(X_f\) is Fano or Calabi-Yau, then \(\Phi_i\) is a full embedding. If
\(X_f\) is general type or Calabi-Yau, then \(\Psi_i\) is a full embedding.
Moreover, the semi-orthogonal complement is determined:

\begin{orlov} \cite[Theorem 3.11]{orlov-sing-09}
	Let \(f\) be as above. For each \(i\in\mathbb{Z}\), we have the following
	semi-orthogonal decompositions:
	\begin{align*}
		\text{Fano} &: \mathcal{D}(X_f) = \langle
		\mathcal{O}_{X_f}(-i-n+d+1),\ldots,\mathcal{O}_{X_f}(-i),\Phi_i\mathrm{HMF}^{gr}(f)\rangle;
		\\
		\text{General Type} &: \mathrm{HMF}^{gr}(f) = \langle
		k^{stab}(-i),\ldots,k^{stab}(-i+n-d+1),\Psi_i\mathcal{D}(X_f)\rangle; \\
		\text{Calabi-Yau} &: \Phi_i,\Psi_i\text{ induce mutual inverse equivalences }
		\mathcal{D}(X_f)\cong \mathrm{HMF}^{gr}(f).
	\end{align*}
	Here \(k^{stab}\) is a certain matrix factorization associated to the residue
	field of \(k[V]\) at the origin.
\end{orlov}

\subsection{Adding two potentials.}
Let \(f,g\) be homogeneous polynomials of degree \(d\) defining smooth
hypersurfaces \(X_f\subset\mathbb{P}^{m-1}\) and \(X_g\subset\mathbb{P}^{n-1}\).
Let \(X = V(f\oplus g)\subset\mathbb{P}^{m+n-1}\). Then \(X\) is smooth since
\(X_f\) and \(X_g\) are smooth.

Suppose \(d\geq \mathrm{max}\{m,n\}\). Then there is a \(\mathbb{Z}\)-indexed
family of embeddings \(\Psi_i:\mathcal{D}(X_f)\to \mathrm{HMF}^{gr}(f)\) and
similarly \(\Psi_j:\mathcal{D}(X_g)\to \mathrm{HMF}^{gr}(g)\). By tensoring, we
can consider the family of embeddings:
\[
	\Psi_{i,j}\colon\mathcal{D}(X_f\times X_g)\cong \mathcal{D}(X_f)\otimes
	\mathcal{D}(X_g) \to \mathrm{HMF}^{gr}(f)\otimes \mathrm{HMF}^{gr}(g).
\]
where \(\Psi_{i,j} = \Psi_i\otimes\Psi_j\) and the
tensor product is understood to be taken in suitable
dg-enhancements\footnote{It is known that Orlov's functors lift to the dg level
\cite{cal-tu-13}}. We further have an identification, see \cite[Corollary 5.18]{bfk1-14}
\[
	\mathrm{HMF}^{gr}(f)\otimes \mathrm{HMF}^{gr}(g)\cong
	\mathrm{HMF}^{gr,\mu_d}(f\oplus g).
\]
where \(\mu_d\) acts on the \(g\) variables.

If in addition \(d\leq n+m\), it was noticed in \cite[Example 3.10]{bfk2-14}
that we can then embed \(\mathrm{HMF}^{gr,\mu_d}(f\oplus g)\) into
\(\mathcal{D}[X/\mu_d]\) using Orlov's Theorem
a second time.  Fix one such embedding to get a doubly indexed family of fully-faithful
functors \(\Xi_{i,j}\colon\mathcal{D}(X_f\times X_g)\to
\mathcal{D}[X/\mu_d]\). The complement consists of \(mn\) exceptional
objects, \(d-m\) copies of \(\mathcal{D}(X_g)\), and \(d-n\) copies of
\(\mathcal{D}(X_f)\). Specifically, we have
\[
	\mathcal{D}[X/\mu_d] = \langle \mathcal{A}, \mathcal{K}, \mathcal{D}_f, \mathcal{D}_g,
	\mathcal{D}_{fg}\rangle
\]
where \(\mathcal{A}\) consists of \( (m+n-d)d\) line bundles,
\(\mathcal{K} \cong \langle k^{stab}(-i),\ldots,k^{stab}(-i+m-d+1)\rangle\otimes \langle
k^{stab}(-j),\ldots,k^{stab}(-j+n-d+1)\rangle\), \(\mathcal{D}_f =
\Phi_i\mathcal{D}(X_f)\otimes\langle k^{stab}(-j),\ldots,k^{stab}(-j+n-d+1)\rangle\),
\(\mathcal{D}_g = \langle k^{stab}(-i),\ldots, k^{stab}(-i+m-d+1)\rangle\otimes
\Phi_j\mathcal{D}(X_g)\), and \(\mathcal{D}_{fg} =
\Xi_{i,j}\mathcal{D}(X_f\times X_g)\).

These functors are not easy to compute with however and, with the exception of
\(\mathcal{A}\), explicitly understanding the left and right semi-orthogonal
complements to the image of \(\Xi_{i,j}\) as \(\mu_d\)-equivariant complexes of
sheaves on \(X\) is not easy. 

\subsection{Main result}

In this paper, we give a more geometric definition of the functors \(\Xi_{i,j}\)
and show that they, miraculously, remain embeddings even if \(d>n+m\). Moreover,
we explicitly determine the other components in the associated semi-orthogonal
decomposition.

\begin{main-theorem}
  Assume \(n\geq m\), then there is a semi-orthogonal decomposition
	\[
		\mathcal{D}[X/\mu_d] = \langle \mathcal{D}_{g1}, \mathcal{D}_{fg},
		\mathcal{D}_{g2}, \mathcal{D}_f, \mathcal{A}\rangle,
	\]
  where \(\mathcal{D}_{g1}\) and \(\mathcal{D}_{g2}\) collectively consist of
  \(d-m\) twists of \(\mathcal{D}(X_g)\) (Section \ref{sssec:d-xg}),
  \(\mathcal{D}_f\) consists of \(d-n\) twists of \(\mathcal{D}(X_f)\) (Section
  \ref{sssec:d-xf}), \(\mathcal{D}_{fg}\) is the image of \(\Xi_{-m,-n}\)
  (Section \ref{ssec:embedding-xf-xg}), and \(\mathcal{A}\) consists of an
  exceptional collection of \({m\choose 2} + m(n-m) +{m+1\choose 2}\) line
  bundles (Section \ref{ssec:a-category}).
\end{main-theorem}

To align with the picture given by Orlov's theorem we can mutate the
decomposition; however, it gets complicated quickly. As stated, each of the
components has a simple description given by explicit Fourier-Mukai functors. 

\subsection{Outline of paper}

In Section \ref{sec:prelims}, we recall facts about (equivariant)
triangulated categories. Section \ref{sec:geometry-pn} is devoted to
understanding the derived category of the quotient stack
\([\mathbb{P}^{m+n-1}/\mu_d]\). In Section \ref{sec:ts} we define all of the
terms in the above decomposition. In Section \ref{sec:sod} we prove 
semi-orthogonality. In Section \ref{sec:koszul} we discuss
sheaves constructed by taking Koszul complexes. In Section \ref{sec:full} we
prove fullness. In Section \ref{sec:orlov}, we show that in the case
\(X_f\) and \(X_g\) are Calabi-Yau, our functors agree with Orlov's up to a
twist by a line bundle. We end the paper with Section
\ref{sec:special-cases}, which is devoted to the special case when \(m=1\).

\subsection{Acknowledgements}

This work was completed at the University of Oregon as the authors thesis. The
author is very grateful to Alexander Polishchuk for introducing this problem to
him and for guiding him through the graduate program at the University of
Oregon. The author is also grateful to the referee for their substantial
comments and suggestions for improvement.

\section{Preliminaries on (Equivariant) Triangulated Categories}
\label{sec:prelims}

Throughout \(k\) is an algebraically closed field of characteristic zero. For an
overview of triangulated categories in algebraic geometry see
\cite{huybrechts-fourier}.

\subsection{Triangulated categories}

Recall, a triangulated category \(\mathcal{T}\) is a  \(k\)-linear category
together with an autoequivalence \([1]:\mathcal{T\to T}\) and a class of exact
triangles
\[
	t\to u\to v\to t[1]
\]
satisfying certain axioms, see \cite{gelfand-manin}.

\begin{example}
	If \(X\) is a scheme, or more generally an algebraic stack, we can associate a
	triangulated category called the bounded derived category of coherent sheaves
	on \(X\), denoted \(\mathcal{D}(X)\). This category is the Drinfeld-Verdier
	localization of the category of chain complexes of coherent sheaves with
	bounded cohomology with respect to the class of quasi-isomorphisms.
	\label{ex:dcat-coherent-sheaves}
\end{example}

\subsection{Semi-orthogonal decompositions}
\label{ssec:sod}

Let \(\mathcal{T}\) be a triangulated category. A \textbf{semi-orthogonal
decomposition} of \(\mathcal{T}\), written
\[
	\mathcal{T} = \langle \mathcal{A}_1,\ldots,\mathcal{A}_n\rangle
\]
is a sequence of full triangulated subcategories
\(\mathcal{A}_1,\ldots,\mathcal{A}_n\) of \(\mathcal{T}\) such that:
\begin{itemize}
	\item \(\mathrm{Hom}_{\mathcal{T}}(a_i,a_j) = 0\) for
		\(a_i\in\mathcal{A}_i\), \(a_j\in \mathcal{A}_j\), and \(i>j\);
	\item For any \(t\in \mathcal{T}\), there is a sequence of morphisms
		\[
			0 = a_n\to a_{n-1}\to\cdots \to a_1\to a_0 = t
		\]
		where \(\mathrm{Cone}(a_i\to a_{i-1})\in\mathcal{A}_i\).
\end{itemize}

\begin{example}
	The most common examples of semi-orthogonal decompositions occur when
	\(\mathcal{T} = \mathcal{D}(X)\) for some smooth projective scheme over \(k\).
	In this case, there is often a vector bundle \(\mathcal{E}\) such that
	\(\mathrm{Ext}^\ast_X(\mathcal{E},\mathcal{E})\cong k[0]\). Such an object in
	\(\mathcal{D}(X)\) is called \textbf{exceptional}. The subcategory generated
	by \(\mathcal{E}\) is also abusively denoted by \(\mathcal{E}\) and there are
	two semi-orthogonal decompositions (that it exists follows from Example
	\ref{ex:saturated-exceptional-object}):
	\[
		\mathcal{D}(X) = \langle \mathcal{E}^\perp,\mathcal{E}\rangle = \langle
		\mathcal{E},{}^\perp\mathcal{E}\rangle
	\]
	where \(\mathcal{E}^\perp = \{\mathcal{F}^\cdot\in\mathcal{D}(X)\mid
	\mathrm{Ext}^\ast_X(\mathcal{E},\mathcal{F}^\cdot) = 0\}\) and
	\({}^\perp\mathcal{E}\) is defined similarly.
	\label{ex:exceptional-objects}
\end{example}

\subsection{Spanning classes}

Let \(\mathcal{T}\) be a triangulated category. A subclass of objects
\(\Omega\subset\mathcal{T}\) is called a \textbf{spanning class} if for every
\(t\in \mathcal{T}\) the following two conditions hold:
\begin{itemize}
	\item \(\mathrm{Hom}_{\mathcal{T}}(t,\omega[i]) = 0\) for all \(\omega\in
		\Omega\) and all \(i\in\mathbb{Z}\) implies \(t=0\);
	\item \(\mathrm{Hom}_{\mathcal{T}}(\omega[i],t) = 0\) for all \(\omega\in
		\Omega\) and all \(i\in\mathbb{Z}\) implies \(t=0\).
\end{itemize}

\begin{example}
	If \(X\) is a smooth projective variety over \(k\), then a spanning class is
	furnished by the structure sheaves of closed points:
	\[
		\Omega = \{\mathcal{O}_x\mid x\in X\text{ is a closed point}\}.
	\]
	More generally, if \(\mathcal{X}\) is a smooth and proper Deligne-Mumford
  stack over \(k\), then for each \(k\)-point \(x\colon\mathrm{Spec}(k)\to
  \mathcal{X}\), there is a closed embedding of the residual gerbe \(\iota_x\colon
  [\mathrm{Spec}(k)/\mathrm{Aut}(x)]\hookrightarrow \mathcal{X}\). For each
  irreducible representation \(\xi\) of \(\mathrm{Stab}(x)\), we have an object
  \(\mathcal{O}_{x,\xi}:=\iota_{x\ast}(\mathcal{O}_k\otimes V)\in\mathcal{D(X)}\). The collection of all such objects, as
  \(x\) and \(\xi\) vary, form a spanning class, see \cite[Prop. 2.1]{LP-20}:
	\[
    \Omega = \{\mathcal{O}_{x,\xi}\mid
      x\colon\mathrm{Spec}(k)\to\mathcal{X}\text{ and
    }\xi\in\mathrm{Irr}(\mathrm{Aut}(x))\}
	\]
	\label{ex:spanning-class-stack}
\end{example}

\subsection{Admissible triangulated subcategories}

Let \(\mathcal{A}\subset\mathcal{T}\) be a full triangulated subcategory of a
triangulated category. We say \(\mathcal{A}\) is
\textbf{admissible} if the embedding functor \(\iota:\mathcal{A\to T}\) has a left and right
adjoint.\footnote{We will not need the more general notions of left and right
admissibilility in this paper.}

If \(\mathcal{A}\) is admissible, then it follows formally that \(\mathcal{T}\)
admits two semi-orthogonal decompositions
\[
	\mathcal{T} = \langle \mathcal{A}^\perp,\mathcal{A}\rangle = \langle
	\mathcal{A}, {}^\perp\mathcal{A}\rangle.
\]
where
\begin{align*}
	\mathcal{A}^\perp := \{t\in \mathcal{T}\mid
	\mathrm{Hom}_{\mathcal{T}}(t,a[i]) = 0\text{ for all
	}a\in\mathcal{A},i\in\mathbb{Z}\}; \\
	{}^\perp \mathcal{A}:= \{t\in\mathcal{T}\mid
	\mathrm{Hom}_{\mathcal{T}}(a[i],t) = 0\text{ for all
	}a\in\mathcal{A},i\in\mathbb{Z}\}.
\end{align*}

We have the following useful lemma regarding admissible subcategories.

\begin{proposition}
	Suppose \(\Omega\) is a spanning class for \(\mathcal{T}\) and \(\mathcal{A}\)
	is a full, admissible, triangulated subcategory containing \(\Omega\), then \(\mathcal{A}
	= \mathcal{T}\).
	\label{prop:subcat-spanning-class}
\end{proposition}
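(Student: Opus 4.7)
The plan is to use the two semi-orthogonal decompositions that the admissibility hypothesis provides and to kill one of the two pieces using the spanning class.

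First I would invoke admissibility to write
\[
	\mathcal{T} = \langle \mathcal{A}^\perp, \mathcal{A}\rangle,
\]
as recorded in the paragraph preceding the proposition. The goal is then reduced to showing that the orthogonal complement $\mathcal{A}^\perp$ is the zero subcategory: indeed, if $\mathcal{A}^\perp = 0$, then for any $t\in\mathcal{T}$ the decomposition triangle has its $\mathcal{A}^\perp$-part equal to zero, so $t$ is isomorphic to its $\mathcal{A}$-part and hence lies in $\mathcal{A}$.

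To show $\mathcal{A}^\perp = 0$, I would pick any $t\in\mathcal{A}^\perp$ and unpack the definition: $\mathrm{Hom}_{\mathcal{T}}(t, a[i]) = 0$ for every $a\in\mathcal{A}$ and every $i\in\mathbb{Z}$. Since $\Omega\subset \mathcal{A}$ by hypothesis, in particular $\mathrm{Hom}_{\mathcal{T}}(t,\omega[i]) = 0$ for all $\omega\in\Omega$ and all $i$. The first condition in the definition of a spanning class then forces $t = 0$.

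There is really no hard step here; the only thing to watch is that I use the correct one of the two bullets in the spanning-class definition to match the orthogonality direction in $\langle \mathcal{A}^\perp, \mathcal{A}\rangle$. (As a sanity check, the symmetric argument applied to the other decomposition $\mathcal{T} = \langle \mathcal{A}, {}^\perp\mathcal{A}\rangle$ uses the second bullet and yields ${}^\perp\mathcal{A} = 0$, giving the same conclusion; either direction suffices.)
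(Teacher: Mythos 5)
Your argument is correct and follows the same route as the paper: invoke admissibility to get the decomposition $\mathcal{T}=\langle\mathcal{A}^\perp,\mathcal{A}\rangle$, then use the first spanning-class condition to show $\mathcal{A}^\perp=0$. You simply fill in more detail than the paper's terse two-line proof.
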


\begin{proof}
	Since \(\mathcal{A}\) is admissible, there is a semi-orthogonal decomposition
	of \(\mathcal{T}\) of the form:
	\[
		\mathcal{T} = \langle \mathcal{A}^\perp,\mathcal{A}\rangle
	\]
	The condition that \(\mathcal{A}\) contains a spanning class implies that
	\(\mathcal{A}^\perp\) must be trivial.
\end{proof}

\begin{proposition}
  Suppose \(\mathcal{A,B}\subset\mathcal{T}\) are admissible triangulated
  subcategories and \(\Omega_\mathcal{A}\) and \(\Omega_\mathcal{B}\) be
  spanning classes of \(\mathcal{A,B}\), respectively. If
  \(\mathrm{Hom}_\mathcal{T}(b,a[i]) = 0\) for all
  \(a\in\Omega_\mathcal{A}\), \(b\in\Omega_\mathcal{B}\), and
  \(i\in\mathbb{Z}\), then \(\langle \mathcal{A,B}\rangle\)
  is a semiorthogonal decomposition.
  \label{prop:sod-spanning}
\end{proposition}

\begin{proof}
  Since \(\mathrm{Hom}_\mathcal{T}(b,a[i]) = 0\) for all
  \(a\in\Omega_\mathcal{A}\), \(b\in\Omega_\mathcal{B}\), and
  \(i\in\mathbb{Z}\), it follows that \(\mathcal{B}\subset
  {}^\perp\mathcal{A}\).
\end{proof}

\subsection{Saturated triangulated subcategories}

A triangulated category \(\mathcal{T}\) is called \textbf{saturated} if every
cohomological functor (contravariant or covariant) \(H:\mathcal{T}\to
\mathrm{Vect}_k\) of finite type is representable. We have the following
important proposition regarding saturated subcategories, see \cite[Proposition
2.6]{bondal-kapranov-serre}.

\begin{proposition}
	Let \(\mathcal{A}\) be a saturated triangulated category and
	\(\iota:\mathcal{A}\to \mathcal{T}\) is a full embedding. Then \(\mathcal{A}\)
	is an admissible sucategory of \(\mathcal{T}\).
	\label{prop:sat-subcat-admissible}
\end{proposition}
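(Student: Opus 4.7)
The plan is to build both a left and a right adjoint to the inclusion $\iota\colon\mathcal{A}\to\mathcal{T}$ by representing suitable Hom-functors, using the saturation of $\mathcal{A}$ twice. Fix $t\in\mathcal{T}$ and consider the contravariant functor $H_t\colon\mathcal{A}\to\mathrm{Vect}_k$ defined by $H_t(a)=\mathrm{Hom}_{\mathcal{T}}(\iota a, t)$. Because $\iota$ is an exact embedding into the triangulated category $\mathcal{T}$, the functor $H_t$ turns every exact triangle in $\mathcal{A}$ into a long exact sequence, so $H_t$ is cohomological. Assuming $H_t$ is of finite type (discussed below), saturation of $\mathcal{A}$ produces an object $R(t)\in\mathcal{A}$ together with a natural isomorphism
\[
	\mathrm{Hom}_{\mathcal{T}}(\iota a, t)\cong \mathrm{Hom}_{\mathcal{A}}(a, R(t)).
\]

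Next, I would promote $t\mapsto R(t)$ to an exact functor $R\colon\mathcal{T}\to\mathcal{A}$. For a morphism $t\to t'$ in $\mathcal{T}$, post-composition induces a natural transformation $H_t\Rightarrow H_{t'}$, which by the Yoneda lemma inside $\mathcal{A}$ corresponds to a unique morphism $R(t)\to R(t')$; functoriality is automatic, and sending triangles to triangles follows by a standard five-lemma/Yoneda argument once one knows $R$ is well-defined. By construction $R$ is right adjoint to $\iota$. An entirely parallel argument, now applied to the covariant cohomological functor $a\mapsto \mathrm{Hom}_{\mathcal{T}}(t, \iota a)$ (which is represented by its own object $L(t)\in\mathcal{A}$ via saturation), produces a left adjoint $L$ to $\iota$. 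Together these two adjoints witness $\mathcal{A}$ as an admissible subcategory of $\mathcal{T}$.

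The main obstacle is the finite-type hypothesis needed to trigger saturation. The functors $H_t$ and its covariant counterpart are defined entirely through morphism spaces in $\mathcal{T}$, so their finiteness is a property of $\mathcal{T}$, not of $\mathcal{A}$; without some finiteness on $\mathcal{T}$ one cannot directly apply the definition of saturation to $\mathcal{A}$. In the geometric settings of interest here, $\mathcal{T}$ will be $\mathcal{D}(X)$ for a smooth and proper Deligne--Mumford stack, so all graded Hom spaces are finite-dimensional and bounded, and the finite-type condition is automatic. This is the only non-formal step; everything else is a Yoneda-type manipulation, which is why the cited statement \cite[Proposition 2.6]{bondal-kapranov-serre} is essentially a direct consequence of the definition of saturation once the finiteness issue is in hand.
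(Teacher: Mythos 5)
The paper gives no proof of this proposition; it is cited as \cite[Proposition 2.6]{bondal-kapranov-serre}. Your argument is a correct reconstruction of the standard representability proof that underlies that citation: restrict the contravariant and covariant Hom-functors $\mathrm{Hom}_{\mathcal{T}}(\iota(-),t)$ and $\mathrm{Hom}_{\mathcal{T}}(t,\iota(-))$ along $\iota$, observe they are cohomological, invoke saturation to represent them by objects $R(t),L(t)\in\mathcal{A}$, and then use Yoneda to assemble $R$ and $L$ into functors which are, by construction, right and left adjoints to $\iota$. Your flag on the finite-type hypothesis is the right thing to worry about and is handled correctly: as stated the proposition is silently assuming that $\mathcal{T}$ is Ext-finite (so that $\bigoplus_i\mathrm{Hom}_{\mathcal{T}}(\iota a, t[i])$ is finite-dimensional), which is the standing assumption in Bondal--Kapranov and holds automatically for $\mathcal{T}=\mathcal{D}$ of a smooth proper Deligne--Mumford stack, the only case used in the paper. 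The one step you wave at---that $R$ and $L$ are exact---deserves a word: this is the general fact that an adjoint of an exact functor between triangulated categories is again exact, which is not purely Yoneda but is standard; it is fine to cite rather than reprove.
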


\begin{example}
	The derived category of coherent sheaves on a smooth projective variety,
	\(X\), is saturated, \cite[Theorem 2.14]{bondal-kapranov-serre}. If
	\(\mathcal{E}\) is an exceptional object of \(\mathcal{D}(X)\), then there is
	a full embedding \(\iota_{\mathcal{E}}:\mathcal{D}(\mathrm{Spec}(k))\to
	\mathcal{D}(X)\) given by \(\iota_{\mathcal{E}}(V) = \mathcal{E}\otimes V\).
	This justifies Example \ref{ex:exceptional-objects}.
	\label{ex:saturated-exceptional-object}
\end{example}

We will use the following proposition in conjuction with Proposition
\ref{prop:subcat-spanning-class} in Section \ref{sec:full}.

\begin{proposition}\cite[Theorem 2.10]{bondal-kapranov-serre}
	If \(\mathcal{A},\mathcal{B}\subset \mathcal{T}\) are full, saturated, triangulated
	subcategories such that \(\langle \mathcal{A},\mathcal{B}\rangle\) is
	semi-orthogonal, then \(\langle \mathcal{A},\mathcal{B}\rangle\) is saturated.
	\label{prop:sod-saturatedness}
\end{proposition}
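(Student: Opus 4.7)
The plan is to verify saturation directly. Setting \(\mathcal{C} := \langle \mathcal{A},\mathcal{B}\rangle\), I will take an arbitrary contravariant cohomological functor \(H\colon \mathcal{C}\to \mathrm{Vect}_k\) of finite type and construct a representing object of \(\mathcal{C}\) by gluing representing objects coming from \(\mathcal{A}\) and \(\mathcal{B}\); the covariant case is symmetric, and longer decompositions will reduce to the two-term case by induction on the number of factors.

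First, I would restrict \(H\) to each of \(\mathcal{A}\) and \(\mathcal{B}\). Each restriction remains cohomological and of finite type on its respective subcategory, so by the saturation hypothesis there exist objects \(a\in\mathcal{A}\) and \(b\in\mathcal{B}\) together with natural isomorphisms \(H|_{\mathcal{A}} \simeq \Hom_{\mathcal{A}}(-,a)\) and \(H|_{\mathcal{B}} \simeq \Hom_{\mathcal{B}}(-,b)\). By Proposition \ref{prop:sat-subcat-admissible} each of \(\mathcal{A}\) and \(\mathcal{B}\) is admissible in \(\mathcal{C}\), so semi-orthogonality equips every \(x\in\mathcal{C}\) with a functorial decomposition triangle \(b_x \to x \to a_x \to b_x[1]\) with \(a_x\in\mathcal{A}\) and \(b_x\in\mathcal{B}\).

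Next, I would extract the gluing datum. Any morphism between representable functors is Yoneda-dual to a morphism of the representing objects, so \(H\) should determine a canonical class \(\alpha \in \Hom_{\mathcal{C}}(a,b[1])\) that encodes how \(H\) transports the connecting maps of decomposition triangles between the \(\mathcal{A}\)- and \(\mathcal{B}\)-pieces. Setting \(c := \mathrm{Cone}(\alpha)[-1]\) yields an object of \(\mathcal{C}\) sitting in its own decomposition triangle \(b\to c\to a\to b[1]\). To check that \(c\) represents \(H\), I apply both \(H\) and \(\Hom_{\mathcal{C}}(-,c)\) to the decomposition triangle of an arbitrary \(x\in\mathcal{C}\) and compare the resulting long exact sequences: the terms corresponding to \(a_x\) and \(b_x\) match by construction of \(a\) and \(b\), the connecting maps match by naturality of \(\alpha\), and the five-lemma delivers the required natural isomorphism \(H(x)\simeq \Hom_{\mathcal{C}}(x,c)\).

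The hard part will be isolating the gluing class \(\alpha\) and checking that it is well-defined and natural; this is what upgrades the pair \((a,b)\) from two independent representing objects into a genuine triangulated gluing inside \(\mathcal{C}\), and it requires careful tracking of Yoneda identifications across the semi-orthogonal structure. Once that bookkeeping is handled, the five-lemma comparison and the inductive step for longer decompositions are routine.
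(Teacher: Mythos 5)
The paper offers no proof of this proposition; it is quoted verbatim from Bondal--Kapranov, so there is nothing internal to compare against. Your blind attempt, however, has a genuine gap at the gluing step. In the decomposition $\langle\mathcal{A},\mathcal{B}\rangle$ the component $c_{\mathcal{A}}$ in the triangle $c_{\mathcal{B}}\to c\to c_{\mathcal{A}}\to c_{\mathcal{B}}[1]$ is the image of $c$ under the \emph{left} adjoint $\alpha^{*}$ to the inclusion $\alpha_{*}\colon\mathcal{A}\hookrightarrow\mathcal{C}$, characterised by $\Hom_{\mathcal{C}}(x,a')\cong\Hom_{\mathcal{A}}(\alpha^{*}x,a')$. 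The object that represents $H|_{\mathcal{A}}$, on the other hand, must satisfy $\Hom_{\mathcal{A}}(y,a)\cong H(y)=\Hom_{\mathcal{C}}(y,c)$ for $y\in\mathcal{A}$, which characterises the image of $c$ under the \emph{right} adjoint $\alpha^{!}$. These two projections are genuinely different (they differ by a mutation through $\mathcal{B}$), so your $a$ is not the third vertex of the decomposition triangle of the putative representing object, and $\mathrm{Cone}(\alpha)[-1]$ will not represent $H$.

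You can see the five-lemma comparison fail concretely: for $a_x\in\mathcal{A}$ you assert $\Hom_{\mathcal{C}}(a_x,c)\cong\Hom_{\mathcal{A}}(a_x,a)$, but from the triangle $b\to c\to a$ one gets an exact sequence involving $\Hom_{\mathcal{C}}(a_x,b)$ and $\Hom_{\mathcal{C}}(a_x,b[1])$, and these are \emph{not} killed by semi-orthogonality (only $\Hom(\mathcal{B},\mathcal{A})$ vanishes, not $\Hom(\mathcal{A},\mathcal{B})$). For an explicit counterexample take $\mathcal{C}=\mathcal{D}(\mathbb{P}^1)=\langle\mathcal{O}(-1),\mathcal{O}\rangle$ and $H=\Hom(-,\mathcal{O}(1))$. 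Then $H|_{\langle\mathcal{O}(-1)\rangle}$ is represented by $a=\mathcal{O}(-1)^{\oplus 3}$ and $H|_{\langle\mathcal{O}\rangle}$ by $b=\mathcal{O}^{\oplus 2}$, whereas the actual decomposition components of $\mathcal{O}(1)$ are $\mathcal{O}(-1)[1]$ and $\mathcal{O}^{\oplus 2}$. Since $\Ext^{1}(\mathcal{O}(-1),\mathcal{O})=0$ your gluing class is forced to vanish, giving $c=\mathcal{O}(-1)^{\oplus 3}\oplus\mathcal{O}^{\oplus 2}$, and $\Hom(\mathcal{O}(-1),c)=k^{5}\neq k^{3}=H(\mathcal{O}(-1))$. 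A correct proof must handle the asymmetry between the two adjoints (e.g. by first subtracting off the contribution coming from one component before representing the remainder on the other), rather than the symmetric ``represent on each piece, then cone'' that you propose.
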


We will also need the following theorem in Section \ref{sec:full}.

\begin{theorem}
	Suppose \(F:\mathcal{D}(X)\to \mathcal{T}\) is a full embedding where \(X\) is
	smooth and projective over \(k\). Further suppose there exists a saturated subcategory
	\(\mathcal{A}\) containing \(F(\Omega)\), where \(\Omega\) is a spanning class
	for \(\mathcal{D}(X)\). Then \(F(\mathcal{D}(X))\subset\mathcal{A}\).
	\label{thm:swapping-functors}
\end{theorem}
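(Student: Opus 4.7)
The plan is to show directly, using both admissibilities in play, that each $F(t)$ coincides with its projection onto $\mathcal{A}$. First, since $\mathcal{D}(X)$ is saturated (Example~\ref{ex:saturated-exceptional-object}) and $F$ is a full embedding, $F(\mathcal{D}(X))$ is saturated as well. By Proposition~\ref{prop:sat-subcat-admissible}, both $\mathcal{A}$ and $F(\mathcal{D}(X))$ are admissible in $\mathcal{T}$; I denote the corresponding right adjoints to the inclusions by $R_{\mathcal{A}}$ and $R_{F}$. Because $F$ is fully faithful, $F(\Omega)$ is immediately a spanning class for the subcategory $F(\mathcal{D}(X))$.

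Fix $t \in \mathcal{D}(X)$. Using the semi-orthogonal decomposition $\mathcal{T} = \langle \mathcal{A}^\perp, \mathcal{A}\rangle$ coming from admissibility of $\mathcal{A}$, there is a triangle
\[
R_{\mathcal{A}}(F(t)) \to F(t) \to b \to R_{\mathcal{A}}(F(t))[1]
\]
with $b \in \mathcal{A}^\perp$. It suffices to show $b = 0$, for then $F(t) \cong R_{\mathcal{A}}(F(t)) \in \mathcal{A}$.

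The first half of the argument shows $b$ is also right-orthogonal to $F(\mathcal{D}(X))$. Since $F(\Omega) \subset \mathcal{A}$ and $b \in \mathcal{A}^\perp$, we have $\Hom(F(\omega), b[i]) = 0$ for every $\omega \in \Omega$ and $i \in \mathbb{Z}$. The adjunction defining $R_{F}$ then gives $\Hom(F(\omega), R_{F}(b)[i]) = 0$; because $R_{F}(b) \in F(\mathcal{D}(X))$ and $F(\Omega)$ is a spanning class there, the spanning-class condition forces $R_{F}(b) = 0$, i.e.\ $b \in F(\mathcal{D}(X))^\perp$. The second half observes that $b$ itself sits inside the triangulated subcategory of $\mathcal{T}$ generated by $\mathcal{A} \cup F(\mathcal{D}(X))$, being the cone of a morphism between $R_{\mathcal{A}}(F(t)) \in \mathcal{A}$ and $F(t) \in F(\mathcal{D}(X))$. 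Standard long-exact-sequence propagation of the vanishings $\Hom(\mathcal{A}, b) = 0$ and $\Hom(F(\mathcal{D}(X)), b) = 0$ then yields $\Hom(w, b) = 0$ for every $w$ in that generated subcategory; applied to $w = b$ this gives $\mathrm{id}_b = 0$, hence $b = 0$.

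The main obstacle I anticipate is recognising this last ``self-orthogonality'' step: $\mathcal{A}^\perp \cap F(\mathcal{D}(X))^\perp$ need not be trivial in an arbitrary $\mathcal{T}$, but the specific $b$ arising from our construction is forced to zero precisely because it lies in the subcategory against which we are testing. Both admissibilities are essential — that of $\mathcal{A}$ to produce $b$, and that of $F(\mathcal{D}(X))$ to transport the spanning-class vanishing inward to a cone $b$ which, \emph{a priori}, lives outside $F(\mathcal{D}(X))$.
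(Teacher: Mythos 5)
Your proof is correct and takes essentially the same approach as the paper's: the core step in both is identical, namely using the adjunction to convert the orthogonality $\Hom(F(\omega),b[i])=0$ (for $\omega\in\Omega$, $b\in\mathcal{A}^\perp$) into the vanishing of the right adjoint applied to $b$, and then invoking the spanning class. Where the paper tersely asserts that this is ``sufficient'' (implicitly relying on $F(t)\in{}^\perp(\mathcal{A}^\perp)=\mathcal{A}$), you fill in that gap explicitly by fixing $t$, forming the cone $b$, and running a self-orthogonality argument on the defining triangle; this is a valid and slightly more self-contained way to finish, though one can shorten your last step by applying $\Hom(-,b)$ directly to the triangle $R_{\mathcal{A}}(F(t))\to F(t)\to b$ rather than routing through the generated subcategory.
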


\begin{proof}
	It is sufficient to prove that the right adjoint \(G:\mathcal{T}\to
	\mathcal{D}(X)\) is zero on \(\mathcal{A}^\perp\). Suppose \(b\in
	\mathcal{A}^\perp\). For every \(\omega\in \Omega\) and \(i\in\mathbb{Z}\) we
	have
	\[
		\Ext^i_X(\omega,G(b)) \cong
		\mathrm{Hom}_{\mathcal{A}}(F(\omega),b[i]) = 0.
	\]
	Since \(\Omega\) is a spanning class, we conclude \(G(b) = 0\) for all \(b\in
	\mathcal{A}^\perp\).
\end{proof}

\subsection{Equivariant triangulated categories}

Suppose \(G\) is a finite group. An action of \(G\) on a triangulated category
\(\mathcal{T}\) is the following data, \cite[\S 3.1]{kuznetsov-perry}:
\begin{itemize}
	\item For every \(g\in G\), an exact autoequivalence \(g^\ast:\mathcal{T\to
		T}\);
	\item For every \(g,h\in G\), an isomorphism of functors
		\(\varepsilon_{g,h}:(gh)^\ast\xrightarrow{\sim} h^\ast\circ g^\ast\)
		satisfying the usual associativity conditions.
\end{itemize}

A \(G\)-equivariant object of \(\mathcal{T}\) is a pair \( (t,\theta)\), where
\(t\in \mathcal{T}\) and \(\theta\) is a collection of isomorphisms
\(\theta_g:t\xrightarrow{\sim} g^\ast t\) for all \(g\in G\) satifying the usual
associativity diagram. 

For any action of \(G\) on a triangulated category \(\mathcal{T}\), we can form
the category of equivariant objects of \(\mathcal{T}\) denoted
\(\mathcal{T}^G\).

\begin{remark}
	It is not true that \(\mathcal{T}^G\) is always triangulated, i.e. that the
	triangulated structure on \(\mathcal{T}\) descends to \(\mathcal{T}^G\), see
	\cite[Example 8.4]{elagin-equivariant}. 
\end{remark}

\begin{example}
	Suppose a finite group \(G\) acts on a scheme \(X\), then there is an exact
	equivalence \(\mathcal{D}[X/G]\cong \mathcal{D}(X)^G\), see \cite[Section
	3.8]{vistoli}. So in this case there is a natural triangulated structure on
	\(\mathcal{D}(X)^G\). A good reference for equivariant derived categories of
	coherent sheaves is \cite[Section 4]{bkr-mukai-01}.
\end{example}

\begin{example}
  Let \( (\mathcal{F},\theta)\) be an equivariant object in \(\mathcal{D}(X)\).
  Further, let \(\chi:G\to\mathbb{G}_m\) be a multiplicative character of \(G\).
  Define a new equivariant object \(
  (\mathcal{F}\otimes\chi,\theta\otimes\chi)\) where \(\mathcal{F}\otimes\chi =
  \mathcal{F}\) as an object of \(\mathcal{D}(X)\) but the maps
  \(\theta_g\otimes\chi:\mathcal{F}\to g^\ast\mathcal{F}\) are twisted by
  \(\chi\) via the formula \(\theta_g\otimes\chi := \chi(g)\cdot\theta_g\). Thus
  if \(\mathcal{F}\) admits one equivariant structure it can admit several distinct
  equivariant structures.
	\label{ex:twisting-reps}
\end{example}

\subsection{Bondal-Orlov fully-faithfulness criterion}

We shall need the well known fully-faithfulness criterion of Bondal and Orlov.

\begin{theorem}[Bondal, Orlov]
	Let \(X\) be a smooth projective variety over \(k\) and \(\mathcal{T}\) be a
	triangulated category. Suppose \(F:\mathcal{D}(X)\to \mathcal{T}\) is an exact
	functor with a right adjoint \(G\). Then \(F\) is fully-faithful if
	and only if for any two closed points \(x,y\in X\) we have
	\[
		\mathrm{Hom}_{\mathcal{T}}(F(\mathcal{O}_x),F(\mathcal{O}_y)[i]) =
		\begin{cases}
			k & \text{ if }x=y\text{ and }i = 0 \\
			0 & \text{ if }x\neq y\text{ and }i\notin[0,\dim(X)].
		\end{cases}
	\]
	\label{thm:bondal-orlov-ff}
\end{theorem}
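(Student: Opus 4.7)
The direction $(\Rightarrow)$ is immediate: if $F$ is fully faithful then
\[
\Hom_{\mathcal{T}}(F(\mathcal{O}_x),F(\mathcal{O}_y)[i])\cong \Ext^i_X(\mathcal{O}_x,\mathcal{O}_y),
\]
which equals $k$ for $x=y,\,i=0$ and vanishes identically when $x\neq y$ by disjoint supports (in particular for $i\notin[0,\dim X]$).

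For $(\Leftarrow)$, my plan is to work with the right adjoint $G$ and the unit $\eta\colon\Id_{\mathcal{D}(X)}\to GF$; recall that $F$ is fully faithful if and only if $\eta$ is a natural isomorphism. Write $Z_y:=GF(\mathcal{O}_y)$ for each closed point $y$. Via adjunction,
\[
\Ext^i_X(\mathcal{O}_x,Z_y)\cong\Hom_{\mathcal{T}}(F(\mathcal{O}_x),F(\mathcal{O}_y)[i]),
\]
so the hypothesis becomes: $\Ext^i_X(\mathcal{O}_x,Z_y)=0$ whenever $x\neq y$ or $i\notin[0,\dim X]$, together with $\Hom_X(\mathcal{O}_y,Z_y)=k$. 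The first substantial step is to conclude $Z_y\cong\mathcal{O}_y$ with $\eta_{\mathcal{O}_y}$ realizing the isomorphism: Ext vanishing against skyscrapers off $y$ pins the set-theoretic support of $Z_y$ onto $\{y\}$; the amplitude restriction confines its cohomology to degrees $[0,\dim X]$; combined with $\Hom_X(\mathcal{O}_y,Z_y)=k$ and the smoothness of $X$ at $y$, one extracts $Z_y\cong\mathcal{O}_y$, and $\eta_{\mathcal{O}_y}$ is forced to be nonzero (being adjoint to $\Id_{F\mathcal{O}_y}\neq 0$) hence an isomorphism.

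Once $\eta_{\mathcal{O}_y}$ is an isomorphism for every $y$, the plan is to deduce $\eta_E$ iso for every $E\in\mathcal{D}(X)$. For the cone $C_E:=\mathrm{cone}(\eta_E)$, the spanning-class property of $\Omega=\{\mathcal{O}_x\}_x$ (Example~\ref{ex:spanning-class-stack}) reduces $C_E=0$ to the vanishing of $\Hom_X(\mathcal{O}_x,C_E[i])$ for all $x,i$. The long exact $\Hom$-sequence together with the adjunction identification $\Hom_X(\mathcal{O}_x,GF(E)[i])\cong\Hom_{\mathcal{T}}(F\mathcal{O}_x,F(E)[i])$ then turns this into the assertion that the natural map $\Hom_X(\mathcal{O}_x,E[i])\to\Hom_{\mathcal{T}}(F\mathcal{O}_x,F(E)[i])$ is an isomorphism for all $x,i$; the subcategory of $E$ for which this holds is triangulated, contains every $\mathcal{O}_y$ by the point-level case, and a final dévissage—applied directly to the cone of $\eta_E$ via the spanning class—extends the conclusion to all of $\mathcal{D}(X)$.

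The main obstacle will be the support-and-amplitude step pinning $Z_y$ down to $\mathcal{O}_y$: the hypothesis only controls Exts off the diagonal $x\neq y$, so the local structure of $Z_y$ on a formal neighborhood of $y$ must be extracted from the single datum $\Hom_X(\mathcal{O}_y,Z_y)=k$ together with the amplitude bound, using the regularity of the local ring $\mathcal{O}_{X,y}$ and a careful analysis of a minimal resolution of $Z_y$ near $y$ to rule out larger local models.
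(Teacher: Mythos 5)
The paper does not prove this from scratch: it cites \cite[Proposition 7.1]{huybrechts-fourier} and observes that the argument there needs only a right adjoint. Your proposal instead tries to rederive the criterion directly, so the two take genuinely different routes, and it is the substance of your reduction that needs scrutiny.

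The step you flag as the ``main obstacle'' is in fact a genuine gap, and the data you propose to use cannot close it. After the support and amplitude analysis one knows that $Z_y=GF(\mathcal{O}_y)$ is a coherent sheaf concentrated in degree $0$, Artinian and supported at $y$, with $\Hom_X(\mathcal{O}_y,Z_y)=k$, i.e.\ with one-dimensional socle, and that $\eta_{\mathcal{O}_y}$ is the (nonzero, hence injective) socle inclusion. But these conditions alone do \emph{not} force $Z_y\cong\mathcal{O}_y$: any Gorenstein Artinian quotient of $\mathcal{O}_{X,y}$ satisfies them. For $\dim X=1$ take $Z_y=\mathcal{O}_X/\mathfrak{m}_y^2\cong k[t]/(t^2)$; for $\dim X=n$ take $\mathcal{O}_X/(t_1^2,\ldots,t_n^2)$. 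These have socle $k$, are supported at $y$, and have $\Ext^i_X(\mathcal{O}_y,Z_y)$ vanishing for $i<0$ and $i>n$, so they are not ruled out by $\Hom=k$, the amplitude bound, regularity, or a minimal-resolution analysis. What actually excludes such ``fat'' $Z_y$ in the Bondal--Orlov/Bridgeland proof is a \emph{global} input that your outline does not replicate: for a Fourier--Mukai functor the composite $GF$ has a kernel $Q$ on $X\times X$, and one shows $Q$ is a sheaf supported on the diagonal and flat over $X$; flatness is what upgrades the pointwise socle statement to $Q\cong\mathcal{O}_\Delta$. Equivalently, one needs the map $F\colon\Ext^1(\mathcal{O}_y,\mathcal{O}_y)\to\Hom_{\mathcal{T}}(F\mathcal{O}_y,F\mathcal{O}_y[1])$ to be injective, and this is exactly the failure mode in the Gorenstein example (there the induced map on $\Ext^1$ vanishes). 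Your plan supplies no mechanism to obtain this injectivity.

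A secondary issue: in the bootstrap you observe that the class of $E$ with $\Hom(\mathcal{O}_x,E[i])\xrightarrow{\sim}\Hom_{\mathcal{T}}(F\mathcal{O}_x,FE[i])$ is triangulated and contains the skyscrapers, but skyscrapers do not generate $\mathcal{D}(X)$ as a triangulated subcategory, so this by itself proves nothing about general $E$, and the promised ``final d\'evissage'' is not described. This part is repairable (e.g.\ use saturatedness of $\mathcal{D}(X)$ to represent $\Hom_{\mathcal{T}}(F\mathcal{O}_x,F(-))$ by some $R_x$, check via the spanning class and Serre duality that the canonical map $R_x\to\mathcal{O}_x$ is an isomorphism, and conclude), but as written it is a second gap; and it is downstream of the first, unresolved, one.
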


\begin{proof}
	The proof in \cite[Proposition 7.1]{huybrechts-fourier} only requires that the
	functor \(F\) has a right adjoint.
\end{proof}

\begin{remark}
	We will use Theorem \ref{thm:bondal-orlov-ff} when \(\mathcal{T}\) is the
	derived category of a smooth and proper Deligne-Mumford stack over \(k\). In this case,
	the existence of a (left and) right adjoint is guaranteed as the relative
	dualizing sheaf exists.
\end{remark}

\subsection{C\u{a}ld\u{a}raru-Katz-Sharpe Spectral Sequence}

The last ingredient is a spectral sequence for computing extension groups
between structure sheaves of subvarieties. Suppose \(X\) is a smooth projective
variety and \(S,T\subset X\) are subvarieties such that \(W = S\cap T\) is again
smooth. 

\begin{theorem}[\cite{cks}]
  For all \(q\in \mathbb{Z}\), we have an isomorphism
  \[
    \mathcal{E}xt^q_X(\mathcal{O}_S,\mathcal{O}_T)\cong \Lambda^m\mathcal{N}_{W/T}\otimes
    \Lambda^{q-m}\tilde{N}
  \]
  where \(m = \mathrm{codim}(W\hookrightarrow T)\) and \(\tilde{N} =
  T_X|_W/(T_S|_W+T_T|_W)\) is the excess normal bundle. In particular, there is
  a convergent spectral sequence
  \[
    {}^2E^{pq} = H^p(W;\Lambda^m\mathcal{N}_{W/T}\otimes\Lambda^{q-m}\tilde{N})
    \Rightarrow \mathrm{Ext}^{p+q}_X(\mathcal{O}_S,\mathcal{O}_T)
  \]
  \label{thm:cks-ss}
\end{theorem}

Suppose, in addition, \(X\) has an action of a finite group \(G\) and \(S\) and \(T\)
are invariant subvarieties. Then, by functoriality, the isomorphism in Theorem
\ref{thm:cks-ss} is \(G\)-equivariant provided all the occuring sheaves are
equipped with the canonical equivariant structure.

\section{Derived Category of \([\mathbb{P}^{m+n-1}/\mu_d]\)}
\label{sec:geometry-pn}

For the rest of this paper, we let \(\chi:\mu_d\to\mathbb{G}_m\) denote the
standard primitive character \(\chi(\lambda) = \lambda\). 

There is an action of \(\mu_d\) on \(\mathbb{P}^{m+n-1}\), where
\(\mathbb{P}^{m+n-1}\) has coordinates \( [x_1:\ldots:x_m:y_1:\ldots:y_n]\) and
\(\mu_d\) acts by scaling the variables \(y_1,\ldots,y_n\). In terms of the
homogeneous coordinate algebra \(k[x_1,\ldots,x_m,y_1,\ldots,y_n]\), the
variables \(y_i\) have weight \(\chi^{-1}\) and the variables \(x_i\) have
trivial weight.

In this section, we will study the corresponding quotient stack
\([\mathbb{P}^{m+n-1}/\mu_d]\) and its derived category
\(\mathcal{D}[\mathbb{P}^{m+n-1}/\mu_d]\).

\subsection{Equivariant objects}

Let \(H_y = V(x_1,\ldots,x_m)\) and \(H_x = V(y_1,\ldots,y_n)\). The fixed
locus of the \(\mu_d\) action is \((\mathbb{P}^{m+n-1})^{\mu_d} = H_x\sqcup H_y\).
Therefore the sheaves \(\mathcal{O}_{H_x}\) and \(\mathcal{O}_{H_y}\) have a natural
equivariant structure given by the identity morphism. As in Example
\ref{ex:twisting-reps}, we can form the equivariant sheaves
\(\mathcal{O}_{H_x}\otimes\chi^i\) and \(\mathcal{O}_{H_y}\otimes\chi^i\) for
\(i = 0,\ldots,d-1\). 

We equip \(\mathcal{O}(-1)\) with the \(\mu_d\)-linearization
\(\theta_\lambda\colon \mathcal{O}(-1)\to \lambda^\ast\mathcal{O}(-1)\) given by
fiberwise multiplication by \(\lambda\) and consider \(\mathcal{O}(i)\)
with the induced \(\mu_d\)-linearizations. We can also twist these sheaves by
characters to get the equivariant line bundles
\(\mathcal{O}_{\mathbb{P}^{m+n-1}}(i)\otimes\chi^j\) for \(i\in\mathbb{Z}\) and
\(j=0,\ldots,d-1\).

\subsection{Serre duality}
\label{ssec:serre}

The canonical bundle on \(\mathbb{P}^{m+n-1}\) is \(\mathcal{O}(-m-n)\).  It is
locally trivial as a \(\mu_d\)-equivariant bundle; however, the identification
\(\omega_{\mathbb{P}^{m+n-1}}\cong \mathcal{O}(-m-n)\) may involve twisting by
a character. To determine the twist, we recall the Euler exact sequence on
\(\mathbb{P}^{m+n-1}\)
\[
	0\to \Omega^1\to \mathcal{O}(-1)^{\oplus m+n}\xrightarrow{\alpha} \mathcal{O}\to 0
\]
where \(\alpha = (x_1,\ldots,x_m,y_1,\ldots,y_n)\). Since the sections \(y_i\)
have weight \(-1\), the above Euler exact sequence admits the following
\(\mu_d\)-linearization:
\[
	0\to \Omega^1\to (\oplus_{i=1}^m\mathcal{O}(-1))\oplus
	(\oplus_{j=1}^n\mathcal{O}(-1)\otimes\chi^{-1})\xrightarrow{\alpha} \mathcal{O}\to 0
\]
Now taking determinants yields \(\omega_{[\mathbb{P}^{m+n-1}/\mu_d]}\cong
\mathcal{O}_{\mathbb{P}^{m+n-1}}(-m-n)\otimes\chi^{-n}\) as
\(\mu_d\)-equivariant sheaves. Serre duality therefore takes the following form:

\begin{proposition}[Serre Duality]
	For any \(\mathcal{F},\mathcal{G}\in \mathcal{D}[\mathbb{P}^{m+n-1}/\mu_d]\)
	there is a natural isomorphism
	\[
		\Ext^\ast_{[\mathbb{P}^{m+n-1}/\mu_d]}(\mathcal{F},\mathcal{G})\cong
		\Ext^{m+n-1-\ast}_{[\mathbb{P}^{m+n-1}/\mu_d]}(\mathcal{G},\mathcal{F}(-m-n)\otimes\chi^{-n}).
	\]
	\label{prop:serre-duality-projective-mud}
\end{proposition}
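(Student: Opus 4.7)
The plan is to invoke Serre duality for the smooth proper Deligne--Mumford stack $[\mathbb{P}^{m+n-1}/\mu_d]$, using the equivariant dualizing sheaf just computed. The non-formal content of the proposition --- the character twist $\chi^{-n}$ appearing in $\omega$ --- has already been extracted from the equivariantly linearized Euler sequence in the paragraph preceding the statement, so the proposition is essentially a bookkeeping exercise on top of a standard duality result.

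First, I would observe that $\mathbb{P}^{m+n-1}$ is smooth and proper over $k$ of dimension $m+n-1$, and since $\mu_d$ is finite with order invertible in $k$, the quotient $[\mathbb{P}^{m+n-1}/\mu_d]$ is itself a smooth proper DM stack of the same dimension. Such a stack satisfies Serre duality in the form
\[
\Ext^i_{[\mathbb{P}^{m+n-1}/\mu_d]}(\mathcal{F},\mathcal{G}) \cong \Ext^{m+n-1-i}_{[\mathbb{P}^{m+n-1}/\mu_d]}(\mathcal{G},\mathcal{F}\otimes\omega_{[\mathbb{P}^{m+n-1}/\mu_d]})^\vee.
\]
A clean derivation uses the equivalence $\mathcal{D}[X/G]\cong\mathcal{D}(X)^G$: $\mathrm{RHom}$ on the quotient stack is the $\mu_d$-invariant part of $\mathrm{RHom}$ on $\mathbb{P}^{m+n-1}$, and since classical Serre duality on projective space is $\mu_d$-equivariantly natural once we use the equivariant dualizing sheaf, taking $\mu_d$-invariants descends the duality to the stack.

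Plugging in $\omega_{[\mathbb{P}^{m+n-1}/\mu_d]} \cong \mathcal{O}(-m-n)\otimes\chi^{-n}$ from the preceding paragraph then yields the stated isomorphism, after the harmless dualization of the finite-dimensional $k$-vector spaces on both sides. There is no real obstacle here: the sole piece of equivariant content is the character $\chi^{-n}$, which was already accounted for by giving the $y$-variables weight $-1$ in the Euler sequence. Alternatively, one could apply Grothendieck duality directly to the smooth proper morphism $[\mathbb{P}^{m+n-1}/\mu_d] \to \Spec(k)$, with relative dualizing complex $\omega_{[\mathbb{P}^{m+n-1}/\mu_d]}[m+n-1]$, and obtain the same conclusion.
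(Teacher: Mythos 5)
Your proposal is correct and matches the paper's approach: the paper computes $\omega_{[\mathbb{P}^{m+n-1}/\mu_d]}\cong\mathcal{O}(-m-n)\otimes\chi^{-n}$ from the $\mu_d$-linearized Euler sequence in the paragraph immediately preceding the proposition, and then simply states the proposition as an instance of Serre duality for the smooth proper DM stack without writing a separate proof. Your additional remark about the implicit $k$-linear dual (which the paper's phrasing elides, as is common since all Ext groups in play are finite-dimensional over $k$) is accurate and appropriately flagged as harmless.
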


\subsection{Grothendieck Splitting Theorem}

The classical Grothendieck splitting theorem decomposes any vector bundle on
\(\mathbb{P}^1\) as a sum of line bundles. We have the following equivariant
version of this result which is used in Section \ref{ssec:embedding-xf-xg}.

\begin{theorem}[Equivariant Grothendieck Splitting]
	Let \(\mathcal{E}\) be a rank \(r\) vector bundle on \([\mathbb{P}^1/\mu_d]\), then there exists
	\(n_1,\ldots,n_r\) and \(j_1,\ldots,j_r\) such that
	\[
		\mathcal{E}\cong \oplus_{i=1}^r\mathcal{O}(n_1)\otimes\chi^{j_i}.
	\]
	\label{thm:equivariant-grothendieck-splitting}
\end{theorem}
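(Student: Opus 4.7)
The plan is to induct on the rank $r$, using the classical (non-equivariant) Grothendieck splitting to extract an equivariant line subbundle of maximal slope and then splitting the resulting extension equivariantly.

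For the base case $r = 1$, the classical theorem gives $\mathcal{E} \cong \mathcal{O}(n)$ non-equivariantly for some $n \in \mathbb{Z}$. Any two $\mu_d$-linearizations on $\mathcal{O}(n)$ differ by a cocycle valued in $\mathrm{Aut}(\mathcal{O}(n)) = \mathbb{G}_m$, i.e.\ by an element of $\Hom(\mu_d,\mathbb{G}_m) \cong \mathbb{Z}/d$. Hence $\mathcal{E} \cong \mathcal{O}(n) \otimes \chi^j$ as equivariant line bundles for a unique $j$.

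For the inductive step, first forget equivariance and apply classical Grothendieck to write $\mathcal{E} \cong \bigoplus_{i=1}^r \mathcal{O}(n_i)$, and set $n = \max_i n_i$. The twist $\mathcal{E}(-n)$ is still a $\mu_d$-equivariant vector bundle whose non-equivariant summands all have degree $\leq 0$, so $V := H^0(\mathbb{P}^1, \mathcal{E}(-n))$ is a nonzero finite-dimensional $\mu_d$-representation. Decomposing $V = \bigoplus_j V_j$ into its $\chi^j$-isotypic pieces and choosing a nonzero $v \in V_j$, I obtain an equivariant morphism $\varphi_v : \mathcal{O}(n) \otimes \chi^j \to \mathcal{E}$. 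I next need to show $\varphi_v$ is a subbundle inclusion: if its image were not saturated, the saturation would be a line subbundle of $\mathcal{E}$ of degree strictly greater than $n$, which (by projection to a summand $\mathcal{O}(n_i)$) would give a non-equivariant sub-line-bundle of $\mathcal{E}$ of degree $> \max_i n_i$, contradicting the choice of $n$. Concretely, a nonzero global section of $\bigoplus \mathcal{O}(n_i - n)$ with every $n_i - n \leq 0$ must be supported on degree-zero summands and hence be nowhere vanishing.

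This yields a short exact sequence
\[
0 \to \mathcal{O}(n) \otimes \chi^j \to \mathcal{E} \to \mathcal{F} \to 0
\]
of equivariant sheaves with $\mathcal{F}$ an equivariant vector bundle of rank $r-1$; by induction $\mathcal{F} \cong \bigoplus_{i=2}^r \mathcal{O}(m_i) \otimes \chi^{j_i}$ with $m_i \leq n$. To conclude I split this extension equivariantly: the $\mu_d$-equivariant $\Ext^1$ is the subspace of $\mu_d$-invariants in $\Ext^1_{\mathbb{P}^1}(\mathcal{F}, \mathcal{O}(n) \otimes \chi^j) = \bigoplus_i H^1(\mathbb{P}^1, \mathcal{O}(n - m_i))$, and this non-equivariant group already vanishes since $n - m_i \geq 0$ for every $i$. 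The only mildly subtle point is the subbundle claim for $\varphi_v$; everything else is formal once classical Grothendieck and the isotypic decomposition of $V$ are in hand.
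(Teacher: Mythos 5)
Your proof is correct and takes essentially the same approach as the paper's (very terse) sketch: run the classical Grothendieck induction equivariantly, using that $\mu_d$ is abelian so the isotypic decomposition of $H^0(\mathcal{E}(-n))$ consists of characters, pick a nonzero section in an isotypic piece, and show it cuts out an equivariant line subbundle whose quotient splits off. The one point you assert without argument is that the summands $\mathcal{O}(m_i)$ of the quotient $\mathcal{F}$ satisfy $m_i \le n$; this is a standard part of the classical proof (the preimage in $\mathcal{E}$ of a line subbundle $\mathcal{O}(m)\subset\mathcal{F}$ is a rank-$2$ subbundle $\mathcal{O}(a)\oplus\mathcal{O}(b)$ with $a+b=n+m$ and $\max(a,b)\le n$, forcing $m\le n$), and alternatively you could sidestep the $\Ext^1$ computation entirely by taking any non-equivariant splitting of the sequence, which exists classically, and averaging it over $\mu_d$.
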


\begin{proof}
	The proof is almost identical to the classical proof. The twists
	\(\chi^{j_i}\) show up when looking for an equivariant global section of
  \(\mathcal{E}(n_i)\) with \(n_i>>0\). The fact that \(\mu_d\) is Abelian guarantees that
  the irreducible representations are one-dimensional.
\end{proof}

\section{The Hypersurface \([X/\mu_d]\)}
\label{sec:ts}

Let \(X_f\subset \mathbb{P}^{m-1}\) and \(X_g\subset\mathbb{P}^{n-1}\) be smooth
degree \(d\) hypersurfaces. Let \(X = V(f\oplus g)\subset\mathbb{P}^{m+n-1}\) be
the hypersurface associated to the sum of potentials. We impose the conditions
\(d\geq n\geq m\geq 2\), i.e. the hypersurfaces \(X_f\) and \(X_g\) are Calabi-Yau or
general type and are non-empty.

The action of \(\mu_d\) on \(\mathbb{P}^{m+n-1}\) descends to \(X\) and we
consider the quotient stack \([X/\mu_d]\).  The fixed loci are given by the
intersections with \( (\mathbb{P}^{m+n-1})^{\mu_d} = H_x\sqcup H_y\):
\[
	X^{\mu_d} = X\cap (H_x\sqcup H_y) \cong X_f\sqcup X_g.
\]

\subsection{Equivariant geometry of \(X\)}

Line bundles associated to hyperplane sections \(\mathcal{O}_X(iH)\) have \(d\)
distinct equivariant structures. These equivariant line bundles are of the form
\(\mathcal{O}_X(iH)\otimes \chi^j\).

\begin{proposition}[Serre Duality]
	The triangulated category \(\mathcal{D}[X/\mu_d]\) has the Serre functor
	\((-)\otimes\mathcal{O}_X(d-m-n)\otimes \chi^{-n}[m+n-2]\).
	\label{prop:serre-duality}
\end{proposition}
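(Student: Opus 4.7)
The plan is to apply standard Serre duality on the smooth proper Deligne–Mumford stack $[X/\mu_d]$ and then identify the equivariant dualizing sheaf via adjunction from the ambient stack $[\mathbb{P}^{m+n-1}/\mu_d]$, whose dualizing sheaf was already computed just above as $\mathcal{O}(-m-n)\otimes\chi^{-n}$.

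First, since $[X/\mu_d]$ is a smooth proper DM stack of dimension $m+n-2$, it admits a Serre functor of the form $(-)\otimes \omega_{[X/\mu_d]}[m+n-2]$; so the task is purely to pin down the equivariant line bundle $\omega_{[X/\mu_d]}$. Next, I would apply the adjunction formula for the closed immersion $\iota\colon [X/\mu_d]\hookrightarrow[\mathbb{P}^{m+n-1}/\mu_d]$, giving
\[
\omega_{[X/\mu_d]} \;\cong\; \iota^\ast\omega_{[\mathbb{P}^{m+n-1}/\mu_d]}\otimes \mathcal{N}_{[X/\mu_d]/[\mathbb{P}^{m+n-1}/\mu_d]}
\]
as $\mu_d$-equivariant line bundles. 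The ambient factor is $\mathcal{O}(-m-n)\otimes\chi^{-n}$ by Serre duality on the ambient stack.

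The main issue, and the only nontrivial point, is to determine the equivariant structure on the normal bundle, i.e.\ on $\mathcal{O}_X(X)\cong \mathcal{O}_X(d)$. For this I would use the defining section $f\oplus g\in H^0(\mathbb{P}^{m+n-1},\mathcal{O}(d))$: the $x$-variables are $\mu_d$-invariant and each $y$-variable has weight $\chi^{-1}$, so every monomial of $g$ (of total degree $d$ in the $y$'s) has weight $\chi^{-d}=\chi^0$, while monomials of $f$ have weight $\chi^0$. Hence $f\oplus g$ is a $\mu_d$-invariant section, so the equivariant ideal sheaf is $\mathcal{O}(-d)\otimes\chi^0$ and therefore the equivariant normal bundle is $\mathcal{O}_X(d)$ with trivial character twist.

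Combining these computations,
\[
\omega_{[X/\mu_d]} \;\cong\; \mathcal{O}_X(-m-n)\otimes\chi^{-n}\otimes\mathcal{O}_X(d) \;\cong\; \mathcal{O}_X(d-m-n)\otimes\chi^{-n},
\]
and shifting by the stack dimension $m+n-2$ yields the claimed Serre functor. The only real obstacle is the character-tracking in the normal bundle, which is resolved by the weight computation for the section $f\oplus g$.
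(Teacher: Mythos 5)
Your proof is correct and takes the same approach as the paper: both apply the adjunction formula to the closed immersion $[X/\mu_d]\hookrightarrow[\mathbb{P}^{m+n-1}/\mu_d]$ using the previously computed $\omega_{[\mathbb{P}^{m+n-1}/\mu_d]}\cong\mathcal{O}(-m-n)\otimes\chi^{-n}$. You supply additional detail justifying that the equivariant normal bundle carries no character twist, which the paper asserts implicitly by writing $\mathcal{O}_{[X/\mu_d]}(d)$ with trivial character.
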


\begin{proof}
	Since \([X/\mu_d]\) is a smooth substack of \([\mathbb{P}^{m+n-1}/\mu_d]\), we
	can use the adjunction formula 
	\[
		\omega_{[X/\mu_d]}\cong
		\omega_{[\mathbb{P}^{m+n-1}/\mu_d]}\otimes\mathcal{O}_{[X/\mu_d]}(d)\cong
		\mathcal{O}_X(d-m-n)\otimes\chi^{-n}.
	\]
\end{proof}

For Fano hypersurfaces it is easy to see that line bundles are exceptional. With
this extra \(\mu_d\) action, all line bundles on \([X/\mu_d]\)
are exceptional:

\begin{proposition}
	Line bundles are exceptional objects of \(\mathcal{D}[X/\mu_d]\).
	\label{prop:exceptional-line-bundles}
\end{proposition}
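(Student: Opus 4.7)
The plan is to reduce the statement to a computation of $\mu_d$-invariants on the sheaf cohomology of $X$. For any equivariant line bundle $\mathcal{L}$ on $[X/\mu_d]$, the tensor product $\mathcal{L}^\vee\otimes\mathcal{L}$ is canonically isomorphic to the equivariant structure sheaf $\mathcal{O}_{[X/\mu_d]}$, so
\[
    \Ext^i_{[X/\mu_d]}(\mathcal{L},\mathcal{L}) \;\cong\; H^i([X/\mu_d],\mathcal{O}) \;\cong\; H^i(X,\mathcal{O}_X)^{\mu_d}.
\]
It is therefore enough to show the right hand side is $k$ in degree zero and vanishes in all other degrees.

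The middle cohomology is handled directly. Applying the hypersurface short exact sequence
\[
    0\to \mathcal{O}_{\mathbb{P}^{m+n-1}}(-d)\to \mathcal{O}_{\mathbb{P}^{m+n-1}}\to \mathcal{O}_X\to 0
\]
together with the standard cohomology of $\mathbb{P}^{m+n-1}$ gives $H^0(X,\mathcal{O}_X)=k$ with the trivial $\mu_d$-action and $H^i(X,\mathcal{O}_X)=0$ for $0<i<m+n-2$. The only subtlety is the top piece $H^{m+n-2}(X,\mathcal{O}_X)$, which vanishes outright in the Fano range $d<m+n$ but is nonzero in the Calabi--Yau/general type range $d\geq m+n$, and where the $\mu_d$-invariants must be shown to vanish.

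For this last step, the equivariant Serre duality of Proposition~\ref{prop:serre-duality} reduces the problem to computing $H^0(X,\omega_X)^{\mu_d}$, where $\omega_X\cong \mathcal{O}_X(d-m-n)\otimes\chi^{-n}$. Twisting the hypersurface sequence by $\mathcal{O}(d-m-n)$ and using Bott vanishing identifies $H^0(X,\mathcal{O}_X(d-m-n))$ as a $\mu_d$-representation with the degree $(d-m-n)$ piece of $k[x_1,\ldots,x_m,y_1,\ldots,y_n]$, where the $x_i$ are of trivial weight and the $y_j$ have weight $\chi^{-1}$. Consequently, a monomial $x^\alpha y^\beta$ of degree $d-m-n$ contributes the character $\chi^{-n-|\beta|}$ to $H^0(X,\omega_X)$.

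An invariant section would require $|\beta|+n\equiv 0\pmod d$, and the only possible value in the admissible range $0\leq |\beta|\leq d-m-n$ would be $|\beta|=d-n$; but this forces $d-n\leq d-m-n$, i.e.\ $m\leq 0$, contradicting $m\geq 2$. Hence $H^0(X,\omega_X)^{\mu_d}=0$ and the proof is complete. The main obstacle is really this last invariants computation, which is delicate because it fails without the character twist $\chi^{-n}$ in $\omega_X$; the careful bookkeeping of the $\mu_d$-linearization of the canonical bundle done in Section~\ref{sec:geometry-pn} is exactly what makes the argument go through.
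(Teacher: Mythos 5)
Your proof is correct and follows essentially the same strategy as the paper: reduce to showing $H^\ast(\mathcal{O}_X)^{\mu_d}\cong k$, handle the degrees $0<i<m+n-2$ via the hypersurface sequence, and in the case $d\geq m+n$ show that $H^{m+n-2}(\mathcal{O}_X)^{\mu_d}$ vanishes by identifying the Serre dual with global sections of $\mathcal{O}(d-m-n)\otimes\chi^{-n}$ and checking that no monomial $x^I y^J$ can have the required invariant weight. The only cosmetic difference is that you apply Serre duality on $X$ before passing to the ambient projective space, whereas the paper uses the long exact sequence to move to $\mathbb{P}^{m+n-1}$ first and then dualizes there; the monomial-counting contradiction ($|J|=d-n$ forcing $|I|<0$) is identical.
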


\begin{proof}
	It is sufficient to prove \(H^\ast(\mathcal{O}_X)^{\mu_d}\cong k\). We have an
	equivariant exact sequence on \(\mathbb{P}^{m+n-1}\):
	\[
		0\to \mathcal{O}_{\mathbb{P}^{m+n-1}}(-d)\xrightarrow{f\oplus g}
		\mathcal{O}_{\mathbb{P}^{m+n-1}}\to \mathcal{O}_X\to 0.
	\]
	The only possible nonzero cohomology groups are
	\(H^0(\mathcal{O}_X)^{\mu_d}\) and
  \(H^{m+n-2}(\mathcal{O}_X)^{\mu_d}\). Further, we have an isomorphism
	\[
		H^{m+n-2}(\mathcal{O}_X)^{\mu_d}\cong
		H^{m+n-1}(\mathcal{O}_{\mathbb{P}^{m+n-1}}(-d))^{\mu_d}.
	\]
	If \(d<m+n\), then the latter group is zero and we are finished. Suppose
	\(d\geq m+n\). Then 
	\[
		H^{m+n-1}(\mathcal{O}_{\mathbb{P}^{m+n-1}}(-d))^{\mu_d}\cong
		H^0(\mathcal{O}_{\mathbb{P}^{m+n-1}}(d-m-n)\chi^{-n})^{\mu_d}.
	\]
	The latter has a basis of monomials of the form \(x^Iy^J\) where \(I =
	(i_1,\ldots,i_m), J=(j_1,\ldots,j_n)\) and \(x^I = x_1^{i_1}\cdots
	x_m^{i_m}, y^J = y_1^{j_1}\ldots y_n^{j_n}\) such that \(|I|+|J| = d-m-n\)
	and \(|J|+n\) is a positive multiple of \(d\). It follows that \(|J| = d-n\)
	is the only possible option. Hence, \(d-m-n=|I|+|J|=|I|+d-n\) from which we
	conclude \(|I|=-m\), impossible.
\end{proof}

\begin{proposition}
  For \(0<i<m\), \(H^\ast(\mathcal{O}_X\otimes\chi^{-i}) = 0\).
	\label{prop:full-orthogonality}
\end{proposition}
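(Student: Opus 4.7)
The plan is to mimic the computation in the proof of the previous proposition, namely use the $\mu_d$-equivariant resolution
\[
	0 \to \mathcal{O}_{\mathbb{P}^{m+n-1}}(-d) \xrightarrow{f\oplus g} \mathcal{O}_{\mathbb{P}^{m+n-1}} \to \mathcal{O}_X \to 0
\]
(note $f\oplus g$ has trivial character because each monomial of $g$ has weight $\chi^{-d}$ which is trivial), twist through by $\chi^{j-i}$, and show that the two outer terms have no $\mu_d$-invariant cohomology on $\mathbb{P}^{m+n-1}$. Since $H^{\ast}$ on the stack $[X/\mu_d]$ is exactly the $\mu_d$-invariant part of $H^\ast$ on $X$, the long exact sequence of the twisted short exact sequence then yields $H^\ast(\mathcal{O}_X\otimes\chi^{j-i}) = 0$.

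First I would dispose of $\mathcal{O}_{\mathbb{P}^{m+n-1}}\otimes\chi^{j-i}$. Its only nonzero cohomology is $H^0 = k$ with $\mu_d$-weight $\chi^{j-i}$; since $0 < i-j < m \leq d$, this weight is nontrivial and there are no invariants.

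Next I would analyze $\mathcal{O}_{\mathbb{P}^{m+n-1}}(-d)\otimes\chi^{j-i}$. Its only nonzero cohomology is $H^{m+n-1}$, which has a standard basis of Laurent monomials $x^{I}y^{J}$ with every entry of $I, J$ strictly negative and $|I|+|J|=-d$; in particular $-|I|\geq m$ and $-|J|\geq n$, so $-|J|$ lies in $[n, d-m]$. Since $y_\ell$ carries weight $\chi^{-1}$, the monomial $x^Iy^J$ has weight $\chi^{-|J|}$, and after the twist the weight becomes $\chi^{-|J|+j-i}$. A nonzero invariant therefore requires
\[
	-|J| \equiv i-j \pmod{d}, \qquad -|J| \in [n,\,d-m], \qquad i-j \in (0,m).
\]
The two candidate lifts are $-|J|=i-j$, which forces $i-j \geq n \geq m$ contradicting $i-j < m$, and $-|J| = i-j+d$, which forces $i-j \leq -m$ contradicting $i-j > 0$. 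Hence there are no invariants.

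I don't expect any real obstacle here; the only substantive step is the weight bookkeeping on the top Laurent-monomial basis, where one has to track that $|J|$ is bounded below by $n$ (from $X_g$ being cut out in $\mathbb{P}^{n-1}$) and use the hypothesis $n\geq m$ together with $0 < i-j < m$ to rule out both residue classes.
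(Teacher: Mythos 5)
Your proof is correct and follows essentially the same route the paper intends (the paper's proof is just the remark ``analogous to Proposition 4.1.2''): twist the defining short exact sequence by $\chi^{j-i}$, reduce to weight bookkeeping on $H^0(\mathcal{O}_{\mathbb{P}^{m+n-1}})$ and $H^{m+n-1}(\mathcal{O}_{\mathbb{P}^{m+n-1}}(-d))$, and use the range constraints on $|J|$ together with $0<i-j<m\le n$ to rule out invariants. The only cosmetic difference is that you compute the top cohomology directly with Laurent monomials rather than first applying Serre duality to land in $H^0$ as the paper does; the two are equivalent.
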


\begin{proof}
	Analagous to the computation in Proposition
	\ref{prop:exceptional-line-bundles}.
\end{proof}

\subsection{Subcategory of exceptional line bundles.}
\label{ssec:a-category}

Define subcategories \(\mathcal{A}_1,\mathcal{A}_2,\mathcal{A}_3\) of
\(\mathcal{D}[X/\mu_d]\) as follows.
\begin{align*}
	\mathcal{A}_1 &= \langle
	\mathcal{O}_X(-(n-1)-(m-1))\otimes\chi^{-(n-1)}, \\
	&\mathcal{O}_X(-(n-1)-(m-1)+1)\otimes\chi^{-(n-2),-(n-1)}, \\
	&\ldots, \mathcal{O}_X(-(n-1)-1)\otimes \chi^{-(n-m)-1,\ldots,-(n-1)}
	\rangle;\\
	\mathcal{A}_2 &= \langle \mathcal{O}_X(-(n-1))\otimes
	\chi^{-(n-m),\ldots,-(n-1)},\\
	&\mathcal{O}_X(-(n-1)+1)\otimes\chi^{-(n-m)+1,\ldots,-(n-1)+1},\\
	&\ldots,
	\mathcal{O}_X(-(m-1)-1)\otimes\chi^{-1,\ldots,-(m-1)} \rangle;\\
	\mathcal{A}_3 &= \langle
	\mathcal{O}_X(-(m-1))\otimes\chi^{0,\ldots,-(m-1)},\\
  &\mathcal{O}_X(-(m-2))\otimes\chi^{0,\ldots,-(m-2)},\ldots,\mathcal{O}_X\rangle.
\end{align*}
It is understood that if \(m=n\), then \(\mathcal{A}_2\) is zero. Further, the
notation \(\mathcal{O}_X(i)\otimes\chi^{j_1,\ldots,j_k}\) means the subcategory 
generated by the exceptional objects
\(\mathcal{O}_X(i)\otimes\chi^{j_1},\ldots,\mathcal{O}_X(i)\otimes\chi^{j_k}\).
By Proposition \ref{prop:full-orthogonality}, there is a semi-orthogonal
decomposition:
\[
	\mathcal{O}_X(i)\otimes\chi^{j_1,\ldots,j_k} = \langle
	\mathcal{O}_X(i)\otimes\chi^{j_k},\ldots,\mathcal{O}_X(i)\otimes\chi^{j_1}\rangle,
\]
where \(j_1>j_2>\cdots>j_k\) and \(j_1-j_k<m\)

\begin{proposition}
	The decomposition of the subcategories \(\mathcal{A}_i\) for \(i=1,2,3\) is
	semi-orthogonal. Moreover, \(\mathcal{A} = \langle
	\mathcal{A}_1,\mathcal{A}_2,\mathcal{A}_3\rangle\) is semi-orthogonal.
	\label{prop:sod-A}
\end{proposition}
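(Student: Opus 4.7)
The plan is to translate semi-orthogonality into a vanishing statement for $\mu_d$-isotypic components of ordinary cohomology on $X$. For two line bundles $L = \mathcal{O}_X(-k_L)\otimes\chi^{-\beta_L}$ and $L' = \mathcal{O}_X(-k_{L'})\otimes\chi^{-\beta_{L'}}$ in $\mathcal{A}$ with $L$ strictly preceding $L'$, unfolding equivariant $\Hom$ as the $\mu_d$-invariants of ordinary $\Hom$ gives
\[
  \Ext^*_{[X/\mu_d]}(L', L) \cong H^*\!\bigl(X, \mathcal{O}_X(k_{L'} - k_L)\bigr)^{\chi^{\beta_L - \beta_{L'}}},
\]
where the superscript denotes the isotypic component. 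Writing $K = k_L - k_{L'} \geq 0$ and $\ell = \beta_L - \beta_{L'}$, the task reduces to showing that the $\chi^\ell$-isotypic component of $H^*(X, \mathcal{O}_X(-K))$ vanishes.

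First I would handle the within-row case $K = 0$. The ordering inside each block $\mathcal{O}_X(-k)\otimes\chi^{j_1,\ldots,j_s}$ forces $\beta_L > \beta_{L'}$, and the explicit range $\beta \in [\max(0, k - m + 1), \min(k, n - 1)]$ for each $k$ traps $\ell$ strictly inside $(0, m)$; Proposition \ref{prop:full-orthogonality} then gives the desired vanishing at once.

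For the between-row case $K \geq 1$, I would feed the equivariant Koszul sequence
\[
  0 \to \mathcal{O}_{\mathbb{P}^{m+n-1}}(-K-d) \to \mathcal{O}_{\mathbb{P}^{m+n-1}}(-K) \to \mathcal{O}_X(-K) \to 0
\]
into cohomology. Because $0 < K < m + n$, the middle bundle is acyclic, so $H^*(X, \mathcal{O}_X(-K))$ is concentrated in degree $m + n - 2$ and is isomorphic to $H^{m+n-1}(\mathbb{P}, \mathcal{O}_{\mathbb{P}}(-K-d))$. Serre duality on $\mathbb{P}^{m+n-1}$, using $\omega_{[\mathbb{P}^{m+n-1}/\mu_d]} \cong \mathcal{O}(-m-n)\otimes\chi^{-n}$, identifies the latter with the dual of $H^0(\mathbb{P}, \mathcal{O}_{\mathbb{P}}(K + d - m - n)\otimes\chi^{-n})$, whose monomial basis $x^I y^J$ with $|I| + |J| = K + d - m - n$ yields $\mu_d$-weights exactly $\chi^{|J| + n}$. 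Consequently the only characters $\chi^t$ that can contribute to $H^{m+n-2}(X, \mathcal{O}_X(-K))$ are those with $t \in [n, K + d - m]$ reduced modulo $d$.

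What remains is to check that $\ell$ avoids this set for every admissible pair. The upper bound $\ell \leq \min(k_L, n-1) - \max(0, k_{L'} - m + 1) \leq n - 1$ puts positive $\ell$ below $[n, d-1]$ automatically. In every configuration with $k_L \geq m - 1$ one also has $\ell \geq K - m + 1$, which places $\ell$ strictly above the potential wrap-around piece $[0, K - m]$ and above the corresponding negative-residue range $[n - d, K - m]$. The only remaining subcase has both line bundles in $\mathcal{A}_3$ with $k_L \leq m - 2$, where $K \leq m - 2 < m$ prevents wrap and the actual $\ell$-range $[-k_{L'}, k_L]$ is disjoint from $[n - d, K - m]$ by a direct computation using $d \geq n$. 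Tracking these inequalities uniformly across the various placements of $L$ and $L'$ in $\mathcal{A}_1, \mathcal{A}_2, \mathcal{A}_3$ is the main technical obstacle, and the hypotheses $d \geq n \geq m \geq 2$ enter precisely at this stage.
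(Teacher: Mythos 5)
Your proposal is correct and follows essentially the same strategy as the paper: reduce the equivariant Ext-vanishing to a $\mu_d$-isotypic vanishing on $X$, pass to $\mathbb{P}^{m+n-1}$ via the hypersurface (Koszul) exact sequence, apply equivariant Serre duality there, and then show that no monomial $x^I y^J$ with the required degree and weight exists. The only difference is organizational: the paper checks $\mathcal{A}_3$ and $\langle\mathcal{A}_2,\mathcal{A}_3\rangle$ explicitly and declares the rest similar, whereas you package the case analysis into the uniform range $\beta\in[\max(0,k-m+1),\min(k,n-1)]$ and the bound $\ell\leq n-1$, $\ell>K-m$, which amounts to the same verification.
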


\begin{proof}
	We only show the semi-orthogonality of the decomposition for
	\(\mathcal{A}_3\). The semi-orthogonality of the decomposition for
	\(\mathcal{A}_1\) and \(\mathcal{A}_2\) is similar.

	The exceptional objects generating \(\mathcal{A}_3\) are of the form
	\(\mathcal{O}(-i_1)\otimes \chi^{-j_1}\) for \(0\leq i\leq m-1\) and \(0\leq i_1\leq
	i_2\). Pick two such objects with \(i_1\leq i_2\) and \(j_1\leq j_2\). By
	Serre Duality and the closed substack exact sequence:
	\[
		H^{m+n-2}(\mathcal{O}_X(i_1-i_2)\otimes\chi^{j_1-j_2})\cong
		H^0(\mathcal{O}_{\mathbb{P}^{m+n-1}}(d+i_2-i_1-n-m)\otimes\chi^{j_2-j_1-n}).
	\]

	We have the following inequalities:
	\begin{align*}
		& m-1\geq i_2-i_1\geq 0, \\
		& m-1\geq j_2-j_1\geq 0.
	\end{align*}

  An equivariant global section of
  \(\mathcal{O}_X(d+i_2-i_1-n-m)\otimes\chi^{j_2-j_1-n}\) is of the form
  \(x^Iy^J\) where \(|I|+|J| = d+i_2-i_1-n-m\) such that \(|J| = j_2-j_1-n+d\).
  But
	\[
		d+i_2-i_1-n-m=|I|+|J| = |I|+d+j_2-j_1-n.
	\]
	Hence, \(|I| = i_2-i_1-(j_2-j_1)-m \leq i_2-i_1-m\leq m-1-m = -1\), which is
	impossible. Thus
	\(H^\ast(\mathcal{O}_X(i_1-i_2)\otimes\chi^{j_1-j_2}) = 0\) and the
	semi-orthogonal decomposition for \(\mathcal{A}_3\) is verified.

	We now check \(\langle \mathcal{A}_2,\mathcal{A}_3\rangle\) the others
  are similar. Recall that, for \(\mathcal{A}_2\) to be non-zero, we need \(n>m\). 
	The relevant group is
	\begin{align*}
		&H^{m+n-2}(\mathcal{O}_X(i_1-(m-1)-i_2)\otimes\chi^{j_1-j_2})^{\mu_d}\\
		&\cong H^0(\mathcal{O}_{\mathbb{P}^{m+n-1}}(d+i_2-i_1-n-1)\otimes\chi^{j_2-j_1-n})^{\mu_d}
	\end{align*}
	for \(i_1=0,\ldots,m-1\), \(j_1=0,\ldots,i_1\), \(i_2 = 1,\ldots,n-m\), \(j_2
	= i_2,\ldots,(m-1)+i_2\). 
	
	If \(d+i_2-i_1-n-1<0\), there is nothing to prove. Assume \(d+i_2-i_1-n-1\geq
	0\). Let \(x^Iy^J\) be an equivariant global section. Since \(j_2-j_1-n <0\)
	we require \(|J| = d+j_2-j_1-n\). Then
	\[
		d+i_2-i_1-n-1 = |I|+|J| = |I|+j_2-j_1-n
	\]
	forces \(|I| = i_2-j_2 + j_1-i_1 -1\). However, \(i_2-j_2\leq 0\) and
	\(j_1-i_1\leq 0\) so \(|I|\leq -1\) which is impossible. This finishes the
	proof.
\end{proof}

\subsection{Geometric subcategories}
\label{ssec:geom-subcat}

\subsubsection{The subcategory \(\mathcal{D}_f\):}
\label{sssec:d-xf}

Let \(\iota_f\colon X_f\to X\) be given by \(\iota_f([x_1:\ldots:x_m]) =
[x_1:\ldots:x_m:0:\ldots:0]\). Clearly \(\iota_f\) is \(\mu_d\)-equivariant as
it coincides with a component of the fixed locus. Let \(\iota_{f\ast}\) denote
the corresponding equivariant pushforward functor \(\iota_{f\ast}\colon\mathcal{D}(X_f)\to
\mathcal{D}[X/\mu_d]\). This means first include \(\mathcal{D}(X_f)\) into the
trivial component of
\(\mathcal{D}[X_f/\mu_d] =
\bigoplus_{i=0}^{d-1}\mathcal{D}(X_f)\otimes\chi^i\), then use
the equivariant pushforward.

\begin{proposition}
	If \(d>n\), then \(\iota_{f\ast}\) is fully-faithful.
	\label{prop:xf-ff}
\end{proposition}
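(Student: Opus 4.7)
The plan is to apply the Bondal-Orlov fully-faithfulness criterion (Theorem \ref{thm:bondal-orlov-ff}) to the equivariant pushforward \(\iota_{f\ast}\). A right adjoint \(\iota_f^!\) is available because both \(X_f\) and \([X/\mu_d]\) are smooth and proper Deligne-Mumford stacks over \(k\), as flagged in the remark following Theorem \ref{thm:bondal-orlov-ff}. It therefore suffices to verify the Ext vanishing conditions on the equivariant skyscrapers \(\iota_{f\ast}\mathcal{O}_x\) and \(\iota_{f\ast}\mathcal{O}_y\) for closed points \(x,y\in X_f\), where \(\mathcal{O}_x\) and \(\mathcal{O}_y\) are equipped with the trivial linearization and then pushed forward.

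For \(x\neq y\), the sheaves \(\iota_{f\ast}\mathcal{O}_x\) and \(\iota_{f\ast}\mathcal{O}_y\) have disjoint set-theoretic supports in \(X\), so every \(\Ext\) group vanishes. For \(x=y\), I would identify
\[
	\Ext^\bullet_{[X/\mu_d]}(\iota_{f\ast}\mathcal{O}_x,\iota_{f\ast}\mathcal{O}_x) \cong \bigl(\Ext^\bullet_X(\mathcal{O}_x,\mathcal{O}_x)\bigr)^{\mu_d} \cong \bigl(\Lambda^\bullet T_xX\bigr)^{\mu_d},
\]
using that \(X_f\) sits pointwise in the fixed locus \(X^{\mu_d} = X_f\sqcup X_g\), so the action on self-\(\Ext\) is induced by the natural \(\mu_d\)-action on \(T_xX\) and no extra twists appear.

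The key computation is the decomposition of \(T_xX\) as a \(\mu_d\)-representation at a point \(x\in X_f\). The tangent directions along \(X_f\) are acted on trivially, while the \(n\) normal directions to \(X_f\) inside \(X\) come from moving in the \(y\) coordinates and therefore split equivariantly as \(n\) copies of \(\chi^{-1}\); that is, \(T_xX \cong T_xX_f \oplus (\chi^{-1})^{\oplus n}\). Hence
\[
	\Lambda^\bullet T_xX \;\cong\; \Lambda^\bullet T_xX_f \;\otimes\; \bigoplus_{j=0}^{n} \Lambda^j\bigl((\chi^{-1})^{\oplus n}\bigr),
\]
and the \(j\)-th summand on the right carries the character \(\chi^{-j}\). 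Taking \(\mu_d\)-invariants keeps only the terms with \(d\mid j\); since \(0\le j\le n\) and \(d>n\), only \(j=0\) survives. The invariants collapse to \(\Lambda^\bullet T_xX_f \cong \Ext^\bullet_{X_f}(\mathcal{O}_x,\mathcal{O}_x)\), which is \(k\) in degree zero and vanishes outside \([0,\dim X_f]\), verifying the criterion.

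The main obstacle is precisely the equivariant weight bookkeeping that pins down the inequality \(d>n\). Any weaker hypothesis would allow some \(j\in[1,n]\) with \(d\mid j\), producing spurious invariants in higher exterior powers of the normal bundle and extra classes in \(\Ext^\bullet(\iota_{f\ast}\mathcal{O}_x,\iota_{f\ast}\mathcal{O}_x)\) that obstruct fully-faithfulness. Once the threshold is correctly identified, the rest of the argument is formal.
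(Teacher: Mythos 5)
Your argument is essentially the paper's: apply the Bondal--Orlov criterion and compute \(T_pX\) as a \(\mu_d\)-representation at a fixed point \(p\in X_f\), observing that for \(d>n\) no nontrivial exterior power of the normal part can carry a trivial character, so invariants collapse to \(\Lambda^\bullet T_pX_f\). One small sign slip: since the coordinate functions \(y_i\) have weight \(\chi^{-1}\), the corresponding tangent directions carry the dual weight \(\chi\), so \(T_pX\cong\mathbf{1}^{\oplus m-2}\oplus\chi^{\oplus n}\) (as the paper derives from the Euler and normal bundle sequences), not \(\chi^{-1}\) in the normal part; this is immaterial to the invariance count, which only needs \(d>n\) to exclude \(d\mid j\) for \(1\le j\le n\).
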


\begin{proof}
	We use Theorem \ref{thm:bondal-orlov-ff}. Let \(p\in X_f\) be a closed point
	and identify \(p\) with \(\iota_f(p)\). It is sufficient to show vanishing of
	\(\Ext^\ast_{[X/\mu_d]}(\mathcal{O}_p,\mathcal{O}_p)\cong (\Lambda^\ast
	T_pX)^{\mu_d}\) for \(\ast>m-2\). From the normal bundle exact sequence
	\[
		0\to TX\to T\mathbb{P}^{m+n-1}|_X\to \mathcal{O}_X(d)\to 0
	\]
	and the identification \(T_p\mathbb{P}^{m+n-1}\cong \mathbf{1}^{\oplus
	m-1}\oplus \chi^{\oplus n}\) coming from the \(\mu_d\)-linearized Euler exact sequence, we see
	\[
		T_pX\cong \mathbf{1}^{\oplus m-2}\oplus \chi^{\oplus n}
	\]
	If \(d>n\), then \( (\Lambda^\ast T_pX)^{\mu_d} = 0\) for \(\ast>m-2\).
\end{proof}

Define the following subcategories of \(\mathcal{D}(X)^{\mu_d}\):
\[
	\mathcal{D}_f^i = \iota_{f\ast}(\mathcal{D}(X_f))\otimes\chi^i.
\]

\begin{proposition}
	For \(0<i_1-i_2<d-n\) we have the semi-orthogonality
	\[
		\langle \mathcal{D}_f^{i_1}, \mathcal{D}_f^{i_2} \rangle.
	\]
	\label{prop:xf-sod}
\end{proposition}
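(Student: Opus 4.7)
The plan is to compute $\Ext^\ast_{[X/\mu_d]}$ from $\mathcal{D}_f^{i_2}$ to $\mathcal{D}_f^{i_1}$ by adjunction along $\iota_f$, and then use the $\mu_d$-weight of the normal bundle $N_{X_f/X}$ to force vanishing via a congruence mod $d$.

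For $F_1,F_2\in\mathcal{D}(X_f)$, semi-orthogonality of $\langle \mathcal{D}_f^{i_1},\mathcal{D}_f^{i_2}\rangle$ amounts to
\[
\bigl(\Ext^k_X(\iota_{f\ast} F_2, \iota_{f\ast} F_1)\otimes \chi^{i_1-i_2}\bigr)^{\mu_d} = 0
\]
for every $k$. Using the adjunction $(\iota_{f\ast},\iota_f^!)$ and the self-intersection formula for the regular closed embedding $\iota_f$ of codimension $n$, there is a $\mu_d$-equivariant spectral sequence
\[
E_2^{p,q} = \Ext^p_{X_f}(F_2, F_1\otimes \Lambda^q N_{X_f/X}) \Longrightarrow \Ext^{p+q}_X(\iota_{f\ast} F_2,\iota_{f\ast} F_1),\quad 0\le q\le n,
\]
so it suffices to bound the $\mu_d$-weights appearing on the $E_2$-page.

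The next step is to identify $N_{X_f/X}$ equivariantly. Since $X_f = X\cap H_x$ is cut out by $y_1,\ldots,y_n$, which are equivariant sections of $\mathcal{O}(1)\otimes \chi$ by the equivariant Euler sequence of Section \ref{sec:geometry-pn}, we obtain
\[
N_{X_f/X}\cong \mathcal{O}_{X_f}(1)^{\oplus n}\otimes \chi,\qquad \Lambda^q N_{X_f/X}\cong \mathcal{O}_{X_f}(q)^{\oplus\binom{n}{q}}\otimes \chi^q.
\]
Because $F_1,F_2$ have trivial $\mu_d$-weight, column $q$ of $E_2$ is of pure weight $\chi^q$, and therefore every $\mu_d$-weight of $\Ext^\ast_X(\iota_{f\ast} F_2,\iota_{f\ast} F_1)$ lies in $\{\chi^0,\chi^1,\ldots,\chi^n\}$.

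The proof then finishes with a congruence check: the $\mu_d$-invariants of $V\otimes\chi^{i_1-i_2}$ pick out the $\chi^{-(i_1-i_2)}$-weight component of $V$, so the relevant weights are those $q\in[0,n]$ with $q+(i_1-i_2)\equiv 0\pmod d$. The hypothesis $0<i_1-i_2<d-n$ forces $q+(i_1-i_2)\in(0,d)$ for every $q\in[0,n]$, so no such $q$ exists and the invariants vanish for all $k$, yielding the semi-orthogonality. The main technical point to address is a careful lift of the self-intersection formula to the $\mu_d$-equivariant setting; this follows from an equivariant Koszul resolution of $\iota_{f\ast}\mathcal{O}_{X_f}$ built from the equivariant conormal bundle, a construction that also explains why the $E_2$-page can be read off as a pure weight $\chi^q$ in column $q$.
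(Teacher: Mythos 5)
Your proof is correct and rests on the same core idea as the paper's: the normal directions to $X_f$ in $X$ carry pure weight $\chi$, so the exterior powers contributing to $\Ext^\ast_X(\iota_{f\ast}F_2,\iota_{f\ast}F_1)$ have weights $\chi^0,\ldots,\chi^n$, and the hypothesis $0<i_1-i_2<d-n$ makes all shifted weights $q+(i_1-i_2)$ lie strictly between $0$ and $d$, killing the invariants. The paper's proof is a one-line version of this done pointwise: it computes $\Ext^\ast_X(\mathcal{O}_p\otimes\chi^{i_2},\mathcal{O}_p\otimes\chi^{i_1})\cong\Lambda^\ast(\mathbf{1}^{\oplus m-2}\oplus\chi^{\oplus n})\otimes\chi^{i_1-i_2}$ using $T_pX\cong\mathbf{1}^{\oplus m-2}\oplus\chi^{\oplus n}$ and reads off the same weights; your version upgrades this to a global statement via the equivariant self-intersection spectral sequence, which is a bit more than the paper writes but proves vanishing for arbitrary objects directly without appealing to the reduction to skyscrapers.
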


\begin{proof}
  The isomorphism of \(\mu_d\)-representations
	\[
		\Ext^\ast_X(\mathcal{O}_p\otimes
		\chi^{i_2},\mathcal{O}_p\otimes\chi^{i_1}) \cong
		\Lambda^\ast(\mathbf{1}^{\oplus m-2}\oplus \chi^{\oplus n})\otimes
    \chi^{i_1-i_2}
	\]
  shows
  \[
    \Ext^\ast_X(\mathcal{O}_p\otimes
    \chi^{i_2},\mathcal{O}_p\otimes\chi^{i_1})^{\mu_d} \cong 0.
  \]
  Since we have semi-orthogonality on a spanning class, by Proposition
  \ref{prop:sod-spanning} there is a semi-orthogonal decomposition
  \[
    \langle \mathcal{D}_f^{i_1}, \mathcal{D}_f^{i_2} \rangle.
  \]
\end{proof}

Let \(\mathcal{D}_f\) be the strictly full subcategory of
\(\mathcal{D}[X/\mu_d]\) generated by
\(\mathcal{D}_f^1,\ldots,\mathcal{D}_f^{d-n}\). 

\begin{corollary}
	For \(d>n\), we have a semi-orthogonal decomposition 
	\[
		\mathcal{D}_f = \langle
		\mathcal{D}_f^{d-n},\mathcal{D}_f^{d-n-1},\ldots,\mathcal{D}_f^1\rangle.
	\]
	\label{cor:df-sod}
\end{corollary}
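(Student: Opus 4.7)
The plan is to deduce the corollary directly from Proposition~\ref{prop:xf-sod}, together with standard saturation arguments. There are three items to check: pairwise semi-orthogonality in the correct order, generation, and admissibility of each piece so that the filtrations demanded by the definition of an SOD in Section~\ref{ssec:sod} actually exist.

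For semi-orthogonality, observe that in the proposed decomposition the subcategory $\mathcal{D}_f^{i_1}$ appears strictly to the left of $\mathcal{D}_f^{i_2}$ precisely when $i_1 > i_2$, with both indices in $\{1,\dots,d-n\}$. In this range we have $0 < i_1 - i_2 \leq d-n-1 < d-n$, which is exactly the hypothesis of Proposition~\ref{prop:xf-sod}. Hence $\mathrm{Hom}_{[X/\mu_d]}(\mathcal{D}_f^{i_2},\mathcal{D}_f^{i_1})=0$, matching the SOD convention of Section~\ref{ssec:sod}.

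Generation is immediate from the definition of $\mathcal{D}_f$ as the strictly full triangulated subcategory of $\mathcal{D}[X/\mu_d]$ generated by $\mathcal{D}_f^1,\dots,\mathcal{D}_f^{d-n}$. To promote generation plus pairwise semi-orthogonality into an honest semi-orthogonal decomposition, each $\mathcal{D}_f^i$ must be admissible inside $\mathcal{D}_f$. Since $\iota_{f\ast}$ is fully faithful by Proposition~\ref{prop:xf-ff} (using $d>n$) and twisting by the character $\chi^i$ is an equivalence, each $\mathcal{D}_f^i$ is equivalent to $\mathcal{D}(X_f)$, hence saturated by Example~\ref{ex:saturated-exceptional-object}. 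Applying Proposition~\ref{prop:sod-saturatedness} inductively along the chain $\mathcal{D}_f^{d-n},\dots,\mathcal{D}_f^1$ gives that the full subcategory they generate is saturated, and Proposition~\ref{prop:sat-subcat-admissible} then supplies the adjoints needed for admissibility of each piece.

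The only real subtlety is bookkeeping: one has to keep track of which side of $\langle\cdot,\cdot\rangle$ carries the larger index, so that the vanishing range in Proposition~\ref{prop:xf-sod} lines up with the SOD convention. Once the ordering is straightened out, the corollary is a formal consequence of Proposition~\ref{prop:xf-sod} and the saturation machinery recorded in Section~\ref{sec:prelims}.
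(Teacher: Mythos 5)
Your proof is correct and follows the same route as the paper, which states this corollary as an immediate consequence of Proposition~\ref{prop:xf-sod} without spelling out the details. Your bookkeeping of the index ordering against the SOD convention is accurate, and the saturation argument (via Propositions~\ref{prop:sat-subcat-admissible} and~\ref{prop:sod-saturatedness}) is a valid, if slightly heavier-than-necessary, way to verify that the required filtrations exist; the same conclusion also follows formally from the definition of \(\mathcal{D}_f\) as the triangulated subcategory generated by the pairwise semi-orthogonal pieces, by a standard octahedron argument.
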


\subsubsection{The subcategory \(\mathcal{D}_g\)}
\label{sssec:d-xg}

Similarly to \(\mathcal{D}_f\), we have a closed embedding
\(\iota_g:X_g\to X\) given by \(\iota_g([y_1:\ldots:y_n]) =
[0:\ldots:0:y_1:\ldots:y_n]\), which is the inclusion of the other
component of the fixed locus and so is \(\mu_d\)-equivariant. Let
\(\iota_{g\ast}\colon\mathcal{D}(X_g)\to \mathcal{D}[X/\mu_d]\) be the associated
equivariant pushforward. The following results are analagous to Propositions
\ref{prop:xf-ff}, \ref{prop:xf-sod} and Corollary \ref{cor:df-sod}.

\begin{remark}
	The restriction of the equivariant structure on a hyperplane divisor of \(X\)
	to \(X_g\) is not the trivial structure. In particular, we have isomorphisms:
	\[
		\mathcal{O}_X(i)|_{X_g}\cong \mathcal{O}_{X_g}(i)\otimes\chi^{-i}.
	\]
  \label{rem:equivariant-structure}
\end{remark}

\begin{proposition}
	If \(d>m\), then \(\iota_{g\ast}\) is fully-faithful.
	\label{prop:xg-ff}
\end{proposition}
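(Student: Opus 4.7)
The plan is to mimic the proof of Proposition \ref{prop:xf-ff} verbatim, exploiting the symmetry that exchanges the roles of $(x_i, f, m)$ and $(y_j, g, n)$, together with the sign change in $\mu_d$-weights (the $y$-variables carry weight $\chi^{-1}$, whereas the $x$-variables are trivial). First I would invoke the Bondal-Orlov criterion (Theorem \ref{thm:bondal-orlov-ff}): since $\iota_{g\ast}$ has a right adjoint, it suffices to verify that $\Ext^\ast_{[X/\mu_d]}(\mathcal{O}_p,\mathcal{O}_p)\cong (\Lambda^\ast T_pX)^{\mu_d}$ equals $k$ in degree $0$ and vanishes outside $[0,\dim X_g]=[0,n-2]$ for each closed point $p\in X_g$, the case of distinct points being automatic from disjoint supports.

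Next, I would compute $T_pX$ as a $\mu_d$-representation using the $\mu_d$-linearized Euler sequence and the normal bundle sequence at $p$. Choosing coordinates with $y_1(p)\neq 0$ and trivializing $\mathcal{O}(1)|_p$ by $y_1$ (a section of weight $\chi^{-1}$), a weight-by-weight cokernel computation yields $T_p\mathbb{P}^{m+n-1}\cong \chi^{-\oplus m}\oplus \mathbf{1}^{\oplus n-1}$, which is precisely the analogue, via the symmetry above, of the identification $T_p\mathbb{P}^{m+n-1}\cong \mathbf{1}^{\oplus m-1}\oplus\chi^{\oplus n}$ used in Proposition \ref{prop:xf-ff}. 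Quotienting by the normal direction, whose image under $(\partial_{x_i}f,\partial_{y_j}g)$ at $p$ lies in the trivial summand (because $\partial_{x_i}f$ vanishes at $x=0$, since $d\geq 2$), produces $T_pX\cong \chi^{-\oplus m}\oplus \mathbf{1}^{\oplus n-2}$.

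Finally, taking exterior algebras, $\Lambda^\ast T_pX$ splits as $\bigoplus_{k=0}^{m}\binom{m}{k}\chi^{-k}\otimes \Lambda^\ast \mathbf{1}^{\oplus n-2}$, and the $\chi^{-k}$ pieces are $\mu_d$-invariant exactly when $d\mid k$. Under the hypothesis $d>m$, only $k=0$ survives, so $(\Lambda^\ast T_pX)^{\mu_d}\cong \Lambda^\ast\mathbf{1}^{\oplus n-2}$, which is one-dimensional in degree $0$ and supported in degrees $[0,n-2]$. The Bondal-Orlov criterion is therefore satisfied and $\iota_{g\ast}$ is fully-faithful.

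The only subtle point I anticipate is the bookkeeping of $\mu_d$-weights when trivializing line bundles at a point $p\in X_g$ rather than $p\in X_f$: because $y_1$ carries the nontrivial character $\chi^{-1}$, the fiber $\mathcal{O}(1)|_p$ is no longer the trivial representation, and this twist propagates through the Euler sequence and the normal bundle sequence. Once the weights are pinned down correctly, the remaining numerics parallels Proposition \ref{prop:xf-ff} exactly.
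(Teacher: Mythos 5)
Your proposal is correct and follows exactly the route the paper intends. Since the paper states Proposition \ref{prop:xg-ff} with no independent proof (calling it ``analogous'' to Proposition \ref{prop:xf-ff}), carrying out the symmetric calculation is precisely what is required, and your bookkeeping comes out right. In particular, the key weight computation $T_pX\cong\chi^{-\oplus m}\oplus\mathbf{1}^{\oplus n-2}$ for $p\in X_g$, obtained by trivializing $\mathcal{O}(1)|_p$ with the section $y_1$ of weight $\chi^{-1}$, dualizes correctly against the paper's identification $T_pX\cong\mathbf{1}^{\oplus m-2}\oplus\chi^{\oplus n}$ for $p\in X_f$, and your observation that the differential $d(f\oplus g)$ kills the trivial direction because $\partial_{x_i}f$ vanishes along $x=0$ (using $d\geq 2$) correctly identifies the normal direction as lying in the weight-zero part. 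The conclusion $(\Lambda^\ast T_pX)^{\mu_d}\cong\Lambda^\ast\mathbf{1}^{\oplus n-2}$ under the hypothesis $d>m$ then gives the required vanishing outside $[0,n-2]$ for the Bondal--Orlov criterion, exactly mirroring how $d>n$ is used in the $X_f$ case.
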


Define the subcategories
\[
	\mathcal{D}_g^i = \iota_{g\ast}(\mathcal{D}(X_g))\otimes\chi^i
\]
of \(\mathcal{D}([X/\mu_d])\).

\begin{proposition}
	For \(m-d<i_1-i_2<0\) we have
	\[
		\langle \mathcal{D}_g^{i_1},\mathcal{D}_g^{i_2}\rangle.
	\]
	\label{prop:xg-sod}
\end{proposition}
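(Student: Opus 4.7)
The plan is to mirror the proof of Proposition \ref{prop:xf-sod}, with the roles of the $x$- and $y$-coordinates reversed. The first step is to identify the tangent $\mu_d$-representation at a closed point $q \in X_g \subset X$. Running the $\mu_d$-linearized Euler sequence of Section \ref{sec:geometry-pn} in an affine chart where one of the $y$-coordinates is nonzero, the $x$-directions now play the role of the transverse directions and acquire weight $\chi^{-1}$ (since the $x$'s have trivial weight while the dehomogenizing $y$-coordinate has weight $\chi^{-1}$), while the $y$-directions are trivial. This gives
\[
T_q \mathbb{P}^{m+n-1} \cong (\chi^{-1})^{\oplus m} \oplus \mathbf{1}^{\oplus n-1}.
\]
Because $f$ has no linear part at $q$ and $dg|_q$ is a nonzero invariant linear form on the $y$-directions (by smoothness of $X_g$), passage to $X$ kills exactly one trivial summand, yielding $T_q X \cong (\chi^{-1})^{\oplus m} \oplus \mathbf{1}^{\oplus n-2}$.

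From this, the Ext computation at closed points follows exactly as in Proposition \ref{prop:xf-sod}: one obtains an isomorphism of $\mu_d$-representations
\[
\Ext^{\ast}_{X}\bigl(\mathcal{O}_q \otimes \chi^{i_2},\, \mathcal{O}_q \otimes \chi^{i_1}\bigr) \cong \Lambda^{\ast}\bigl((\chi^{-1})^{\oplus m} \oplus \mathbf{1}^{\oplus n-2}\bigr) \otimes \chi^{i_1 - i_2}.
\]
The weights occurring on the right-hand side are $\chi^{-a + (i_1 - i_2)}$ with $0 \le a \le m$. Under the hypothesis $m - d < i_1 - i_2 < 0$, the exponent $-a + (i_1 - i_2)$ lies in the open interval $(-d, 0)$, which contains no multiple of $d$; consequently the $\mu_d$-invariants vanish in every cohomological degree.

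Finally, the closed-point vanishing propagates to categorical semi-orthogonality $\langle \mathcal{D}_g^{i_1}, \mathcal{D}_g^{i_2} \rangle$ exactly as in Proposition \ref{prop:xf-sod}: skyscrapers on $X_g$ form a spanning class (Example \ref{ex:spanning-class-stack}), $\iota_{g\ast}$ is fully-faithful by Proposition \ref{prop:xg-ff}, and adjunction transports the vanishing from the generators to arbitrary pushforwards. I do not anticipate a substantive obstacle here; the only point requiring care is the assignment of weight $\chi^{-1}$ (rather than $\chi$) to the transverse $x$-directions at $q$, which is the mirror of the corresponding calculation at $p \in X_f$. Once that sign is correct, the arithmetic confirming that $-a + (i_1 - i_2)$ avoids all multiples of $d$ on the range $(m - d, 0)$ is immediate.
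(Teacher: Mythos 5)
Your proof is correct and takes exactly the approach the paper intends: the paper does not spell out a proof for Proposition \ref{prop:xg-sod}, instead declaring it ``analogous'' to Proposition \ref{prop:xf-sod}, and your computation of $T_q X \cong (\chi^{-1})^{\oplus m}\oplus\mathbf{1}^{\oplus n-2}$ with the resulting weight range in $(-d,0)$ is precisely the mirror of the paper's $T_p X\cong\mathbf{1}^{\oplus m-2}\oplus\chi^{\oplus n}$ computation. The one small wrinkle in the write-up is the final sentence about propagating from closed points: the closed-point $\Ext$ calculation is really identifying the $\mu_d$-weights of the normal bundle $\mathcal{N}_{X_g/X}$, and it is the Koszul resolution of $\iota_g^!\iota_{g\ast}$ together with adjunction (not a spanning-class argument) that upgrades this to vanishing of $\Hom$ between arbitrary objects of the two subcategories; but the outcome and the weight arithmetic are the same.
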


\begin{corollary}
	For \(d>m\) we have a semi-orthogonal decomposition
	\[
		\mathcal{D}_g = \langle \mathcal{D}_g^{m-d}, \mathcal{D}_g^{m-d+1}, \ldots,
		\mathcal{D}_g^{-1}\rangle.
	\]
	\label{cor:dg-sod}
\end{corollary}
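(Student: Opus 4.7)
The plan is to mirror the proof of Corollary \ref{cor:df-sod}, which itself is an immediate consequence of the pairwise vanishing in Proposition \ref{prop:xf-sod}. First, by analogy with the definition of $\mathcal{D}_f$ in Section \ref{sssec:d-xf}, I would take $\mathcal{D}_g$ to be the strictly full triangulated subcategory of $\mathcal{D}[X/\mu_d]$ generated by $\mathcal{D}_g^{m-d}, \mathcal{D}_g^{m-d+1}, \ldots, \mathcal{D}_g^{-1}$. Generation of $\mathcal{D}_g$ by these components is then automatic from the definition, and the only remaining content of the corollary is semi-orthogonality of the indicated ordering.

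For semi-orthogonality, the task is to verify $\Hom_{[X/\mu_d]}(b, a[k]) = 0$ for all $a \in \mathcal{D}_g^{i_1}$, $b \in \mathcal{D}_g^{i_2}$, and $k \in \mathbb{Z}$ whenever $\mathcal{D}_g^{i_1}$ appears strictly to the left of $\mathcal{D}_g^{i_2}$ in the list, i.e.\ whenever $i_1 < i_2$ with $i_1, i_2 \in \{m-d, \ldots, -1\}$. By Proposition \ref{prop:xg-sod}, it suffices to check the strict inequalities $m - d < i_1 - i_2 < 0$. The upper bound is immediate from $i_1 < i_2$. For the lower bound, the most extreme case is $i_1 = m - d$ and $i_2 = -1$, which gives $i_1 - i_2 = m - d + 1 > m - d$, still inside the allowed range. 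Hence Proposition \ref{prop:xg-sod} applies to every such pair and gives the desired semi-orthogonality.

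There is no serious obstacle here; the corollary is essentially bookkeeping on top of the pairwise computation of Proposition \ref{prop:xg-sod}, parallel to how Corollary \ref{cor:df-sod} follows from Proposition \ref{prop:xf-sod}. The one sanity check I would make is that the indexing set $\{m-d, m-d+1, \ldots, -1\}$ has cardinality $d - m$, matching the $d - m$ twists of $\mathcal{D}(X_g)$ promised in the discussion of the Main Theorem.
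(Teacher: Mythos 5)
Your proof is correct and follows the same route the paper intends: Corollary \ref{cor:dg-sod} is a direct bookkeeping consequence of Proposition \ref{prop:xg-sod}, exactly parallel to how Corollary \ref{cor:df-sod} follows from Proposition \ref{prop:xf-sod}. The paper itself gives no separate argument here, merely asserting the analogy, and your index-range check (including the extreme pair $i_1 = m-d$, $i_2 = -1$) supplies the verification that the paper leaves implicit.
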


In the case \(n>m\), it will be necessary to split \(\mathcal{D}_g\) into two
subcategories. Define:
\[
	\mathcal{D}_{g1} = \langle \mathcal{D}_g^{m-d},
	\mathcal{D}_g^{m-d+1},\ldots,\mathcal{D}_g^{m-n-1}\rangle.
\]
and
\[
	\mathcal{D}_{g2} = \langle \mathcal{D}_g^{m-n},\mathcal{D}_g^{m-n+1},\ldots,
	\mathcal{D}_g^{-1}\rangle.
\]
We have \(\mathcal{D}_g = \langle \mathcal{D}_{g1},\mathcal{D}_{g2}\rangle\),
where it is understood that if \(m=n\), then \(\mathcal{D}_g =
\mathcal{D}_{g1}\).

\subsection{Embedding \(\mathcal{D}(X_f\times X_g)\)}
\label{ssec:embedding-xf-xg}

Let \(Y = \mathbb{P}(\mathcal{O}_{X_f}(-1)\boxplus \mathcal{O}_{X_g}(-1))\) and
\(\pi\colon Y\to X_f\times X_g\) be the projection. Let \(J = V(f,g)\subset X\)
be the join of \(X_f\) and \(X_g\) viewed as subvarieties of \(X\). Then \(Y\) is a resolution of
\(J\). Let \(\sigma\colon Y\to X\) be the induced mapping. Define \(\iota\colon
Y\to X_f\times X_g\times X\) via \(\iota = (\pi,\sigma)\). Then the following
diagram is commutative:
\[
	\begin{tikzcd}
		Y \ar{r}{\iota} \ar{d}{\pi} \ar{dr}{\sigma} & X_f\times X_g\times X \ar{d}{\pi_X}\\
		X_f\times X_g & X
	\end{tikzcd} 
\]
The cyclic group \(\mu_d\) acts on \(Y\) by scaling the second coordinate of the
fiber. We endow \(X_f\times X_g\) with the trivial action rendering the diagram
\(\mu_d\)-equivariant. 

Define a family of \textit{Fourier-Mukai functors}
\[
	\Xi_{i,j}\colon\mathcal{D}(X_f\times X_g)\to \mathcal{D}[X/\mu_d]
\]
using the kernel \(\iota_\ast\mathcal{O}_Y\otimes
\pi_X^\ast\mathcal{O}_X(iH)\otimes\chi^j\), i.e.
\[
	\Xi_{i,j}(\mathcal{F}^\cdot) = \mathbf{R}\pi_{X\ast}(\pi_{X_f\times
	X_g}^\ast(\mathcal{F}^\cdot)\otimes \iota_\ast
  \mathcal{O}_Y)\otimes \mathcal{O}_X(iH)\otimes \chi^j,
\]
where it is understood that before applying \(\Xi_{i,j}\) we precompose with the
embedding \(\mathcal{D}(X_f\times X_g)\hookrightarrow \mathcal{D}[X_f\times
X_g/\mu_d] = \oplus_{i=0}^{d-1}\mathcal{D}(X_f\times X_g)\otimes \chi^i\)
(into the trivial component). Then the derived push and pull functors are taken
equivariantly. 

We show \(\Xi_{i,j}\) is an embedding using Theorem \ref{thm:bondal-orlov-ff}.
Since the kernel is flat over \(X_f\times X_g\) we see
\(\Xi_{i,j}(\mathcal{O}_{(p,q)})\cong \mathcal{O}_{l(p,q)}(i)\otimes\chi^j\), where
\(l(p,q)\cong\mathbb{P}^1\) is the line in \(X\) joining \(\iota_f(p)\) to
\(\iota_g(q)\).

\begin{lemma}
	Let \(\mathcal{N}\) denote the normal bundle to \(l(p,q)\) inside \(X\). Then
	\[
		\mathcal{N}\cong (\oplus_{i=1}^{m-2}\mathcal{O}_{l(p,q)}(1))\oplus
		(\oplus_{j=1}^{n-2}\mathcal{O}_{l(p,q)}(1)\otimes\chi) \oplus
		\mathcal{O}_{l(p,q)}(2-d)\otimes \chi.
	\]
	\label{lem:normal-bundle-splitting}
\end{lemma}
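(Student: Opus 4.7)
The plan is to compute $\mathcal{N} := \mathcal{N}_{l(p,q)/X}$ as the kernel of the restricted Jacobian in the conormal sequence
\[
0 \to \mathcal{N} \to \mathcal{N}_{l/\mathbb{P}^{m+n-1}} \to \mathcal{O}_X(d)|_l \to 0,
\]
working $\mu_d$-equivariantly throughout. Write $l = l(p,q) = \mathbb{P}(W)$ where $W = \mathrm{span}\{(p,0),(0,q)\} \subset V := k^{m+n}$; as $\mu_d$-representations, $V \cong \mathbf{1}^m \oplus \chi^n$ and $W \cong \mathbf{1} \oplus \chi$, so $V/W \cong \mathbf{1}^{m-1} \oplus \chi^{n-1}$. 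The standard equivariant identification $\mathcal{N}_{\mathbb{P}(W)/\mathbb{P}(V)} \cong \mathcal{O}_{\mathbb{P}(W)}(1) \otimes (V/W)$ (tensoring the tautological sub/quotient sequences for $\mathbb{P}(W)$ and $\mathbb{P}(V)$) then gives $\mathcal{N}_{l/\mathbb{P}^{m+n-1}} \cong \mathcal{O}_l(1)^{\oplus m-1} \oplus (\mathcal{O}_l(1) \otimes \chi)^{\oplus n-1}$, while $\mathcal{O}_X(d)|_l \cong \mathcal{O}_l(d)$ with trivial equivariant twist since $\chi^{-d} = \mathbf{1}$.

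Next I would make the Jacobian map $\mathcal{N}_{l/\mathbb{P}^{m+n-1}} \to \mathcal{O}_l(d)$ explicit. The map $V \otimes \mathcal{O}_{\mathbb{P}^{m+n-1}} \to \mathcal{O}_{\mathbb{P}^{m+n-1}}(d-1)$ obtained by composing the Euler sequence with $d(f+g)$ sends $e_i \mapsto \partial_i(f+g)$, and Euler's identity applied to the generators $(p,0), (0,q) \in W$ (using $f(p)=0=g(q)$) shows it kills $W \otimes \mathcal{O}_l$, so it descends to $(V/W) \otimes \mathcal{O}_l \to \mathcal{O}_l(d-1)$. Evaluating at $[sp:tq] \in l$ and using $\partial_{x_i} f(sp) = s^{d-1}\partial_{x_i} f(p)$ and the analogous identity for $g$, the resulting map is
\[
(\overline{v}_x, \overline{v}_y) \otimes \sigma \mapsto \sigma \cdot \bigl(s^{d-1}\, \overline{\partial f(p)}(\overline{v}_x) + t^{d-1}\, \overline{\partial g(q)}(\overline{v}_y)\bigr),
\]
where $\overline{\partial f(p)}\colon V_x/W_x \to k$ and $\overline{\partial g(q)}\colon V_y/W_y \to k$ are the nonzero linear functionals induced by the gradients (nonvanishing by smoothness of $X_f, X_g$).

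To compute the kernel, let $K_f = \ker \overline{\partial f(p)}$ and $K_g = \ker \overline{\partial g(q)}$, of dimensions $m-2$ and $n-2$, and pick complements spanned by $v_f, v_g$ with $\overline{\partial f(p)}(v_f) = \overline{\partial g(q)}(v_g) = 1$. The Jacobian vanishes identically on $K_f \otimes \mathcal{O}_l(1) \oplus K_g \otimes \mathcal{O}_l(1) \cong \mathcal{O}_l(1)^{\oplus m-2} \oplus (\mathcal{O}_l(1) \otimes \chi)^{\oplus n-2}$, while on the two-dimensional complement it reduces to the map $\mathcal{O}_l(1) \oplus (\mathcal{O}_l(1) \otimes \chi) \to \mathcal{O}_l(d)$, $(\sigma, \tau) \mapsto s^{d-1}\sigma + t^{d-1}\tau$. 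Its kernel is parametrized by $\gamma$ via $(\sigma, \tau) = (-t^{d-1}\gamma,\, s^{d-1}\gamma)$; since $s$ has weight $\mathbf{1}$ and $t$ has weight $\chi^{-1}$ as elements of $H^0(\mathcal{O}_l(1)) \cong W^*$, the section $t^{d-1}$ defines an equivariant map $\mathcal{O}_l \to \mathcal{O}_l(d-1) \otimes \chi^{-1}$, forcing $\gamma \in \mathcal{O}_l(2-d) \otimes \chi$. Assembling the three pieces gives the claimed decomposition.

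The main obstacle is the $\mu_d$-bookkeeping: correctly identifying the linearization of $\mathcal{O}_l(1)$ induced by $l \hookrightarrow \mathbb{P}^{m+n-1}$, computing the weights of $s^{d-1}$ and $t^{d-1}$ in $H^0(\mathcal{O}_l(d-1))$, and propagating these through the tensor products to detect the twist by $\chi$ in the exceptional summand $\mathcal{O}_l(2-d) \otimes \chi$.
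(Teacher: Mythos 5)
Your proof is correct and follows essentially the same route as the paper: both compute $\mathcal{N}$ as the kernel in the second normal-bundle exact sequence $0\to\mathcal{N}\to\mathcal{N}_{l/\mathbb{P}^{m+n-1}}\to\mathcal{O}_l(d)\to 0$ and identify the Jacobian as $(s^{d-1}\partial f(p),\,t^{d-1}\partial g(q))$ in suitable coordinates. The only difference is that where the paper uses the equivariant Grothendieck splitting plus a degree count for $\mathcal{O}_l(2-d)$ and ``checking stalks'' at $p$ and $q$ for the $\chi$-twist, you instead write down the kernel generator $(-t^{d-1},s^{d-1})$ of the rank-two piece and read off the twist from the weights of $s,t$; this is a slightly more self-contained way of pinning down the exceptional summand but is the same argument in substance.
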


\begin{proof}
	By the \(\mu_d\)-equivariant Grothendieck splitting theorem (Theorem
	\ref{thm:equivariant-grothendieck-splitting}), we have an isomorphism
	\[
		\mathcal{N}\cong
		\oplus_{i=1}^{m+n-3}\mathcal{O}_{l(p,q)}(n_i)\otimes\chi^{j_i}
	\]
	for some \(n_i\in\mathbb{Z}\) and weights \(j_i\).

	As \(X\) is a degree \(d\) hypersurface in \(\mathbb{P}^{m+n-1}\) and
	\(l(p,q)\) is a linear subvariety of \(\mathbb{P}^{m+n-1}\), the normal bundle
	\(\mathcal{N}\) fits into the following equivariant exact sequence:

	\[
		0\to \mathcal{N}\to
		(\oplus_{i=1}^{m-1}\mathcal{O}_{l(p,q)}(1))\oplus
		(\oplus_{j=1}^{n-1}\mathcal{O}_{l(p,q)}(1)\otimes\chi)\to
		\mathcal{O}_{l(p,q)}(d)\to 0
	\]
	on \(l(p,q)\). The weights come from the description of the morphism
	\[
		\mathcal{O}_{l(p,q)}(1)^{\oplus m+n-2} \to
		\mathcal{O}_{l(p,q)}(d).
	\]
	It is given by multiplication by
	\[
		(\partial_{u_1}f|_{l(p,q)}, \ldots, \partial_{u_{m-1}}f|_{l(p,q)}
		,\partial_{v_1}g|_{l(p,q)}, \ldots, \partial_{v_{n-1}}g|_{l(p,q)}), 
	\]
	where \( u_1,\ldots,u_{m-1}\) are linear sections cutting out
	\(p\in\mathbb{P}^{m-1}\) and \(v_1,\ldots,v_{n-1}\) are linear sections
	cutting out \(q\in\mathbb{P}^{n-1}\).
	
	Up to a linear change of coordinates, we can assume this mapping is
	\[
		(u_1^{d-1},0,\ldots,0,v_1^{d-1},0,\ldots,0).
	\]
	Hence,
	\[
		\mathcal{N}\cong
		(\oplus_{i=1}^{m-2}\mathcal{O}_{l(p,q)}(1))\oplus
		(\oplus_{j=1}^{n-2}\mathcal{O}_{l(p,q)}(1)\otimes\chi)\oplus
		\mathcal{O}(i)\otimes\chi^j
	\]
	Since \(\deg(\mathcal{N}) = m+n-2-d\) we must have \(i = 2-d\). By checking the
	stalks of the normal bundle exact sequence, we must have \(j = 1\). 
\end{proof}

\begin{lemma}
	For \( (p,q),(p',q')\in X_f\times X_g\). If \(p\neq p'\) or \(q\neq q'\), then
	\[
		\Ext^\ast_{[X/\mu_d]}(\mathcal{O}_{l(p,q)},\mathcal{O}_{l(p',q')})= 0.
	\]
	\label{lem:vanishing-distinct-points}
\end{lemma}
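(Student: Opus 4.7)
My plan is a geometric case analysis for the intersection $l(p,q)\cap l(p',q')$ followed by a local equivariant Koszul computation in the one nontrivial case. A direct parametric check on $l(p,q)=\{[sp:tq]:[s:t]\in\mathbb{P}^1\}$ shows that when $p\neq p'$ and $q\neq q'$ the two lines are disjoint (so $\Ext^*=0$ by disjoint support), and that otherwise they meet in a single reduced point: $\iota_f(p)$ when $p=p'$, $q\neq q'$, or $\iota_g(q)$ when $p\neq p'$, $q=q'$. By the symmetry between $f$ and $g$ it suffices to treat the first case; set $l=l(p,q)$, $l'=l(p,q')$, $z=\iota_f(p)$. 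Since $l\cap l'=\{z\}$ is a single reduced point, the sheaves $\mathcal{E}xt^i_X(\mathcal{O}_l,\mathcal{O}_{l'})$ are skyscrapers at $z$, so $\Ext^i_X(\mathcal{O}_l,\mathcal{O}_{l'})\cong\mathcal{E}xt^i_X(\mathcal{O}_l,\mathcal{O}_{l'})_z$ and the equivariant $\Ext$ is just the $\mu_d$-invariant part of this stalk.

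To compute the stalk I would work in the affine chart $x_1\neq 0$ after normalizing $p_1=q_1=1$, with coordinates $u_i=x_i/x_1$ (weight $\mathbf{1}$) and $v_j=y_j/x_1$ (weight $\chi^{-1}$), so that locally $X=\{\bar f(u)+g(v)=0\}$. Smoothness of $X_f$ at $p$ gives, after relabelling, $\partial\bar f/\partial u_2\neq 0$, and the implicit function theorem makes $u_3,\dots,u_m,v_1,\dots,v_n$ into local coordinates on $X$ near $z$. In these coordinates $l$ is cut out in $X$ by the regular sequence
\[
u_3,\dots,u_m,\quad v_2-q_2v_1,\dots,v_n-q_nv_1
\]
of length $m+n-3=\mathrm{codim}_X(l)$. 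The weights of these generators assemble into the equivariant space $V=\mathbf{1}^{\oplus m-2}\oplus(\chi^{-1})^{\oplus n-1}$, and applying $R\mathcal{H}om(-,\mathcal{O}_{l'})$ to the corresponding Koszul resolution of $\mathcal{O}_l$ produces the complex $\Lambda^\bullet V^\vee\otimes\mathcal{O}_{l',z}$ with differential obtained by restricting the cutting functions to $l'$.

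The weight payoff comes next. On $l'=\{u_i=0,\ v_j=q'_jv_1\}$ the $u$-equations restrict to zero while $v_j-q_jv_1$ restricts to $(q'_j-q_j)v_1$, and $q\neq q'$ in $\mathbb{P}^{n-1}$ makes $\eta=\sum_j(q'_j-q_j)e_j^\vee$ a nonzero vector in the $\chi$-isotypic part $V_v^\vee$. The complex splits as a tensor of a pure-weight-$\mathbf{1}$ factor $\Lambda^\bullet V_u^\vee$ with zero differential and a $v$-factor $\Lambda^\bullet V_v^\vee\otimes k[[v_1]]$ with differential $v_1\cdot(\eta\wedge-)$. Using the classical exactness of wedging with a nonzero element in an exterior algebra together with $k[[v_1]]/(v_1)\cong k$, the cohomology of the $v$-factor in degree $b$ is identified with $(\eta\wedge\Lambda^{b-1}V_v^\vee)\otimes k$, of pure weight $\chi^b$ and dimension $\binom{n-2}{b-1}$ for $1\leq b\leq n-1$. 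Hence the stalk $\mathcal{E}xt^i_z$ is a sum of $\chi^b$-isotypic pieces with $1\leq b\leq n-1$, and the hypothesis $d\geq n$ forces $0<b<d$, so every $\chi^b$ is nontrivial and the $\mu_d$-invariants vanish. The case $p\neq p'$, $q=q'$ is identical after exchanging the roles of $u$ and $v$, producing weights $\chi^{-b}$ for $1\leq b\leq m-1<d$.

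The main technical obstacle is the nonstandard form of the Koszul differential, which is not pure wedging but is modified by the factor $v_1$; the cohomology of $\Lambda^\bullet V_v^\vee\otimes k[[v_1]]$ under $v_1\cdot(\eta\wedge-)$ has to be combined carefully with the equivariant weights of $V_v^\vee$ and of $\mathcal{O}_{l',z}/(v_1)$ to extract both the correct dimensions and the range $1\leq b\leq n-1$ that is crucial for invoking $d\geq n$.
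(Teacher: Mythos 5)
Your proposal is essentially the paper's argument: split into the disjoint case and the one-common-point case, then in the latter case reduce to a local equivariant Koszul computation at the intersection point and observe that the nontrivial weights lie strictly between $0$ and $d$. The paper works in the formal completion $\widehat{\mathcal{O}_{X,p}}$ and changes coordinates so that $l$ and $l'$ become the $y_1$- and $y_2$-axes (so $M_q\cong k[[y_1]]$, $M_{q'}\cong k[[y_2]]$), which makes the dualized Koszul complex split off a factor $(M_{q'}\xrightarrow{y_2}M_{q'}\chi)\simeq k\chi$ immediately and avoids the twisted differential $v_1\cdot(\eta\wedge-)$ you analyze; your chart-plus-IFT version is a little more computational but reaches the same weight range $\chi^b$, $1\le b\le n-1$. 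One small bookkeeping point worth fixing: after normalizing $q_1=1$, writing $l'=\{u_i=0,\ v_j=q'_jv_1\}$ tacitly assumes $q'_1\ne 0$ as well, which can fail (e.g.\ $q=[1:0:\cdots]$, $q'=[0:1:\cdots]$); since the $\mu_d$-action scales all $y$-variables uniformly, a generic linear change in the $y$'s makes both $q_1$ and $q'_1$ nonzero and restores your normalization, or one can simply argue as the paper does that the two distinct tangent directions at $\iota_f(p)$ can be made coordinate axes.
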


\begin{proof}
	If \(p\neq p'\) and \(q\neq q'\), then the subvarieties \(l(p,q)\) and
	\(l(p',q')\) are disjoint. The vanishing follows. 

  Suppose \(p = p'\) and \(q\neq q'\). We must compute
	\[
		\Ext^\ast_{[X/\mu_d]}(\mathcal{O}_{l(p,q)}, \mathcal{O}_{l(p,q')})\cong
		\Ext^\ast_{\mathcal{O}_{X,p}}(
		\mathcal{O}_{l(p,q),p},\mathcal{O}_{l(p,q'),p})^{\mu_d}
	\]

  We will use \ref{thm:cks-ss}. Set \(S = l(p,q)\) and \(T = l(p,q')\), then \(W
  = \{p\}\). We have \(\mathcal{N}_{W/T} = T_{l(p,q')}(p) \cong \chi\). Since
  also \(T_{l(p,q)}(p)\cong \chi\), we have by Lemma
  \ref{lem:normal-bundle-splitting}
  \[
    \tilde{N}|_p = (\mathcal{N}|_p)/\chi\cong k^{\oplus m-2}\oplus
    \chi^{\oplus n-2}.
  \]
  Here \(k\) is the trivial character. Since \(p\) is a point, we have
  \[
    \mathrm{Ext}^\ast_X(\mathcal{O}_{l(p,q)},\mathcal{O}_{l(p,q')})\cong
    H^0(\mathcal{E}xt^\ast_X(\mathcal{O}_{l(p,q)},\mathcal{O}_{l(p,q')}))\cong
    \chi\otimes\Lambda^{q-m}(k^{\oplus m-2}\oplus \chi^{\oplus n-2})
  \]
  and hence the weights of this extension group are always between \(1\) and
  \(n-1\). In particular, they are nontrivial and so we have the desired
  vanishing.

  The case \(p\neq p'\) and \(q=q'\) is analogous.
\end{proof}

We can now prove \(\Xi_{i,j}\) is fully-faithful.

\begin{theorem}
	The functors \(\Xi_{i,j}\) are fully-faithful for all \(i,j\).
	\label{thm:xi-qff}
\end{theorem}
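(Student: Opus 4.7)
The plan is to apply the Bondal--Orlov fully-faithfulness criterion (Theorem~\ref{thm:bondal-orlov-ff}); a right adjoint to $\Xi_{i,j}$ exists because both source and target are bounded derived categories of smooth proper Deligne--Mumford stacks over $k$. Since tensoring both arguments of an $\Ext$-group by the same equivariant line bundle preserves it, and $\Xi_{i,j}(\mathcal{O}_{(p,q)})\cong \mathcal{O}_{l(p,q)}(i)\otimes\chi^j$, it suffices to verify the criterion for the images $\mathcal{O}_{l(p,q)}$. The off-diagonal vanishing when $(p,q)\neq(p',q')$ is exactly Lemma~\ref{lem:vanishing-distinct-points}, so only the diagonal computation of $\Ext^*_{[X/\mu_d]}(\mathcal{O}_{l(p,q)},\mathcal{O}_{l(p,q)})$ remains, and I must exhibit a one-dimensional degree-zero component and vanishing outside $[0,m+n-4]=[0,\dim(X_f\times X_g)]$.

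Since $l(p,q)\cong\mathbb{P}^1$ is a regular embedding in the smooth variety $X$, the local Ext-sheaves satisfy $\mathcal{E}xt^q_X(\mathcal{O}_{l(p,q)},\mathcal{O}_{l(p,q)})\cong\Lambda^q\mathcal{N}$ as $\mu_d$-equivariant sheaves. The equivariant local-to-global spectral sequence then reads
\[
E_2^{r,q} = H^r\bigl(\mathbb{P}^1,\Lambda^q\mathcal{N}\bigr)^{\mu_d}\Longrightarrow \Ext^{r+q}_{[X/\mu_d]}(\mathcal{O}_{l(p,q)},\mathcal{O}_{l(p,q)}),
\]
so the problem reduces to identifying $(\Lambda^q\mathcal{N})^{\mu_d}$. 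Using the splitting from Lemma~\ref{lem:normal-bundle-splitting}, decompose $\mathcal{N}=\mathcal{F}_0\oplus\mathcal{F}_1$ with $\mathcal{F}_0=\mathcal{O}(1)^{\oplus m-2}$ of trivial $\mu_d$-weight and $\mathcal{F}_1$ the direct sum of the remaining $n-1$ summands, each of weight $\chi$. A rank-one summand of $\Lambda^q\mathcal{N}$ selecting $k_1$ factors from $\mathcal{F}_1$ carries weight $\chi^{k_1}$; invariance requires $k_1\equiv 0\pmod d$, and the bound $k_1\leq n-1<d$ coming from $d\geq n$ forces $k_1=0$. Hence $(\Lambda^q\mathcal{N})^{\mu_d}\cong\Lambda^q\mathcal{F}_0$, which vanishes for $q>m-2$ and is a direct sum of non-negative line bundles on $\mathbb{P}^1$ otherwise, so its $H^1$ vanishes. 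The spectral sequence degenerates onto the row $r=0$, giving total Ext concentrated in degrees $[0,m-2]\subseteq[0,m+n-4]$, and in degree zero the only contribution is $H^0(\mathbb{P}^1,\mathcal{O})^{\mu_d}=k$, as required.

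The main obstacle is precisely the equivariant weight bookkeeping of the previous paragraph: a second formal invariant candidate would arise with $k_1=d$, producing contributions in high exterior degrees that would destroy the upper range control required by Bondal--Orlov. The numerical hypothesis $d\geq n$ is exactly the inequality needed to rule this case out, and this is the crucial place where the assumption on $d$ enters the argument.
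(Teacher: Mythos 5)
Your overall strategy — reduce via Bondal--Orlov to the diagonal $\Ext$ computation, then apply the equivariant local-to-global spectral sequence with the normal bundle splitting from Lemma~\ref{lem:normal-bundle-splitting} — is exactly the paper's strategy. However, there is a genuine gap in the weight bookkeeping that breaks your main claim.

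You assert that a summand of $\Lambda^q\mathcal{N}$ involving $k_1$ factors from $\mathcal{F}_1$ contributes only to the $\chi^{k_1}$-isotypic part of cohomology, so that invariance forces $k_1=0$ and hence $(\Lambda^q\mathcal{N})^{\mu_d}\cong\Lambda^q\mathcal{F}_0$, concentrating $\Ext$ in degrees $[0,m-2]$. This is false because $\mu_d$ acts \emph{nontrivially} on the line $l(p,q)\cong\mathbb{P}^1$ (it scales the $X_g$-direction coordinate), so the $\mu_d$-representation $H^r(\mathbb{P}^1,\mathcal{O}(a))$ is not concentrated in a single character — its sections are monomials $u^c v^{a-c}$ with weights running over $\chi^0,\chi^{-1},\dots,\chi^{-a}$. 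Tensoring by $\chi^{k_1}$ merely shifts these, so a summand with $k_1\neq 0$ can absolutely have invariant sections. For instance with $m=n=d=3$ one finds $\mathcal{N}=\mathcal{O}(1)\oplus\mathcal{O}(1)\otimes\chi\oplus\mathcal{O}(-1)\otimes\chi$, and $H^0(\mathcal{O}(1)\otimes\chi)^{\mu_3}$ is one-dimensional (the section of weight $\chi^{-1}$ becomes invariant after the twist); moreover $\Lambda^2\mathcal{N}\supset\mathcal{O}(2)\otimes\chi$ gives a nonzero $\Ext^2$, contradicting your concentration in $[0,m-2]=[0,1]$. The true range is only $[0,m+n-4]$, and the hypothesis $d\geq n$ is used not to kill all $\mathcal{F}_1$-contributions but to rule out invariant monomials at the boundary degrees. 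This is precisely why the paper's proof specializes to the cases $(r,s)=(0,m+n-3)$ and $(1,m+n-4)$ and carries out a direct monomial/Serre-duality count there rather than attempting a uniform vanishing. Your argument needs to be replaced by that boundary analysis.
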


\begin{proof} 
	Using Theorem \ref{thm:bondal-orlov-ff} and Lemma
	\ref{lem:vanishing-distinct-points} we only need to show
	\[
		\Ext^\ast_{[X/\mu_d]}(\mathcal{O}_{l(p,q)},\mathcal{O}_{l(p,q)}) =
		\begin{cases}
			k & \ast = 0 \\
			0 & \ast\notin [0,m+n-4]
		\end{cases}.
	\]
	That \(\Hom_{[X/\mu_d]}(\mathcal{O}_{l(p,q)},\mathcal{O}_{l(p,q)}) \cong k\) is
	clear, we now show vanishing.

	For this we use the local-to-global spectral sequence. Since \(l(p,q)\)
	and \(X\) are smooth, this reduces to:
	\[
		\mathrm{H}^r(\Lambda^s\mathcal{N})\Rightarrow
		\Ext^{r+s}_{[X/\mu_d]}(\mathcal{O}_{l(p,q)},\mathcal{O}_{l(p,q)}).
	\]
	here \(\mathcal{N}\) is the normal bundle from Lemma
	\ref{lem:normal-bundle-splitting}.

	To establish the relevant vanishing, we must compute
	\(\mathrm{H}^r(\Lambda^s\mathcal{N})\) for \( (r,s) = (0,m+n-3),
	(1,m+n-4)\). We will compute separately.
	
	For the case \( (r,s) = (0,m+n-3)\), we have
	\[
		\Lambda^{m+n-3}\mathcal{N}\cong
		\mathcal{O}_{l(p,q)}(m+n-4+2-d) \otimes \chi^{n-1}\cong
		\mathcal{O}(m+n-d-2)\otimes\chi^{n-1}.
	\]
	Suppose \(m+n-d-2\geq 0\) (otherwise there is nothing to check), then we would
	require a monomial of the form \(x^ay^{n-1}\) with \(a\geq 0\) and \(a+n-1 =
	m+n-d-2\). Solving for \(a\), we have \(a = m-d-1\leq -1\), since
	\(d\geq m\), which is impossible.

	Now for the case \( (r,s) = (1,m+n-4)\). By the decomposition of Lemma
  \ref{lem:normal-bundle-splitting}, only the summands
  of \(\Lambda^{m+n-4}\mathcal{N}\) that involve
  \(\mathcal{O}_{l(p,q)}(2-d)\otimes\chi\) as a tensor factor can contribute towards
  \(H^1(\Lambda^{m+n-4}\mathcal{N})\). In which case, the
	isotypical summands of \(\Lambda^{m+n-4}\mathcal{N}\) involving
	\(\mathcal{O}(2-d)\otimes\chi\) are:
	\[
    \mathcal{O}_{l(p,q)}(m+n-3-d)\otimes\chi^{n-2},
    \mathcal{O}_{l(p,q)}(m+n-3-d)\otimes\chi^{n-1}.
	\]

	If \(m+n-3-d\geq -1\), then the first cohomology group is zero even before
  taking invariants. Assume \(m+n-3-d\leq -2\). By Serre duality, we have
	isomorphisms:
  \begin{align*}
    H^1(\mathcal{O}_{l(p,q)}(m+n-3-d)\otimes\chi^{n-2})^{\mu_d}&\cong
    H^0(\mathcal{O}_{l(p,q)}(d+1-m-n)\otimes\chi^{1-n})^{\mu_d}; \\
    H^1(\mathcal{O}_{l(p,q)}(m+n-3-d)\otimes\chi^{n-1})^{\mu_d}&\cong
    H^0(\mathcal{O}_{l(p,q)}(d+1-m-n)\otimes\chi^{-n})^{\mu_d}.
  \end{align*}

	We remark that \(d>n\) here; otherwise, if \(d = n\), then \(m-3\leq -2\)
	forces \(m=1\) and we assume \(m\geq 2\). In particular, the weights
  \(\chi^{-n}\) and \(\chi^{1-n}\) are nontrivial above. 

  In the \(\chi^{-n}\) case, we must find a monomial of the form
	\(x^ay^{d-n}\) where \(a\geq 0\) and \(a+d-n = d+1-m-n\). This forces \(a =
	1-m\), which is absurd. Similarly, we would need a monomial of the form
	\(x^ay^{d-n+1}\) with \(a\geq 0\) and \(a+d-n+1 = d+1-m-n\) and hence \(a =
	-m\), which is still absurd. Thus there are no equivariant global sections and
  the group vanishes. The \(\chi^{1-n}\) case is similar.

	We conclude \(\Xi_{0,0}\) is fully-faithful and so \(\Xi_{i,j}\) is
	fully-faithful for all \(i,j\in\mathbb{Z}\) as it differs from \(\Xi_{0,0}\)
  by the autoequivalence \(\mathcal{F}^\cdot\mapsto \mathcal{F}^\cdot\otimes\mathcal{O}_X(iH)\otimes\chi^j\).
\end{proof}

Let \(\mathcal{D}_{fg} = \Xi_{-m,-n}\mathcal{D}(X_f\times X_g)\). By Lemma
\ref{thm:xi-qff} we have that \(\Xi_{-m,-n}\) is a full embedding. The main result can
now be stated.

\begin{main-theorem}
	In the above notation, we have a semi-orthogonal decomposition
	\[
		\mathcal{D}[X/\mu_d] = \langle
		\mathcal{D}_{g1}, \mathcal{D}_{fg}, \mathcal{D}_{g2},\mathcal{D}_f,
		\mathcal{A}\rangle.
	\]
	\label{thm:main-result}
\end{main-theorem}

The proof of this theorem will occupy \S \ref{sec:sod}-\ref{sec:full}. In \S
\ref{sec:sod} we finish proving that the decomposition is semi-orthogonal. We analyze
other sheaves that we can construct from the components in \S \ref{sec:koszul}.
They will be necessary in using other kernels and proving fullness. In \S
\ref{sec:full} we complete the proof of fullness. 

It is worth noting that in the cases \( (m,n) = (2,2), (2,3), (3,3)\), there is
an easier proof of this result. The idea of the proof is what is used in the
subsequent sections and so we believe it does no harm in proving these special
cases now.

\begin{theorem}
  The main theorem holds in the special cases 
  \[ 
    (m,n) = (2,2), (2,3), (3,3).
  \]
  \label{thm:mt-specialcase}
\end{theorem}

\begin{proof}
	We will only do the case \( (m,n) = (2,2)\) with the understanding that the
	other two are similar. The subcategories are as follows:
	\begin{align*}
		\mathcal{D}_{g1} &= \mathcal{D}_g = \langle \mathcal{D}_g^{2-d},\ldots,
		\mathcal{D}_g^{-1}\rangle, \\
		\mathcal{D}_{fg} &= \Xi_{-2,-2}(\mathcal{D}(X_f\times X_g)), \\
		\mathcal{D}_f &= \langle \mathcal{D}_f^{d-2},\ldots,\mathcal{D}_f^1\rangle,
		\\
		\mathcal{A} &= \langle \mathcal{O}_X(-2)\otimes\chi^{-1},
		\mathcal{O}_X(-1)\chi^{0,-1}, \mathcal{O}_X\rangle, \\
	\end{align*}
	Define \(\mathcal{T} = \langle \mathcal{D}_g, \mathcal{D}_{fg},
	\mathcal{D}_f, \mathcal{A}\rangle\). We will prove orthogonality in \S
	\ref{sec:sod}, the difficult part is fullness. To do this, it suffices to show
  \(\mathcal{T}\) has a spanning class; see Proposition
  \ref{prop:subcat-spanning-class}. 

	Using Example \ref{ex:spanning-class-stack}, we see that the collection of
	objects consisting of free orbits, say \(\mathcal{O}_Z\) where \(Z =
	\{\lambda\cdot z\}_{\lambda\in\mu_d}\) and \(\lambda\cdot z\neq z\), as well
	as the sheaves \(\mathcal{O}_{\iota_f(p)}\otimes\chi^i\) and
	\(\mathcal{O}_{\iota_g(q)}\otimes\chi^i\) for \(i = 1,\ldots,d\) form a
	spanning class. 

	Let \(J =J(X_f,X_g)\) inside of \(X\) denote the join of \(X_f\) and \(X_g\).
  A free orbit \(Z\subset X\setminus J\) is a complete intersection. To see
  this, take \([p:q]\in Z\) and let \(s_x\) be a linear form cutting out \(p\)
  in the variables \(x_1,x_2\) and similarly let \(s_y\) be a linear form cutting out
  \(q\) in the variables \(y_1,y_2\). For any other element \([p:\lambda q]\in Z\), we
  have \(s_x[p:\lambda q] = s_x(p) = 0\) and \(s_y[p:\lambda q] = s_y(\lambda q)
  = \lambda s_y(q) = 0\). Since \(Z\) is a free orbit (of size \(d\)) and \(X\) is degree \(d\),
  it follows that \(Z\) is a complete intersection. The corresponding resolution of \(\mathcal{O}_Z\) given by
	\[
		0\to \mathcal{O}_X(-2)\otimes\chi^{-1}\to \mathcal{O}_X(-1)\oplus
		\mathcal{O}_X(-1)\otimes\chi^{-1}\to \mathcal{O}_X
	\]
  is in the subcategory \(\mathcal{A}\). Hence
	\(\mathcal{O}_Z\in \mathcal{T}\) for every free orbit \(Z\subset X\setminus
  J\). 

	Now let \(l(p,q)\) denote the line joining
	\(\iota_f(p)\) to \(\iota_g(q)\). We will see in \S \ref{sec:koszul} that the
	objects \(\mathcal{O}_{l(p,q)}(-d)\) and \(\mathcal{O}_{l(p,q)}\) are in
	\(\mathcal{T}\) for all \(p,q\). This implies the structure sheaves of free
  orbits on \(l(p,q)\) are contained in \(\mathcal{T}\). It remains to see that the twists of the
	fixed orbits are in \(\mathcal{T}\).

	Using \(\mathcal{D}_g\) and \(\mathcal{D}_f\) we only need one additional
	twist, namely \(\mathcal{O}_p\), \(\mathcal{O}_q\) (or in the case (m,n) =
	(2,3),(3,3) we will also need
	\(\mathcal{O}_p\otimes\chi^{-1},\mathcal{O}_q\otimes\chi\)). To do that, we
	notice that \(\mathrm{Cone}(\mathcal{O}_X(-1)\to \mathcal{O}_X)\cong
	\mathcal{O}_{J(p,X_g)}\in \mathcal{A}\) by cutting out \(p\) with a section of
	\(\mathcal{O}_X(1)\). We then have the exact sequence
	\[
		0\to \mathcal{O}_{J(p,X_g)}\to \bigoplus_{q\in X_g}\mathcal{O}_{l(p,q)}\to
		\mathcal{O}_p^{\oplus d-1}\to 0.
	\]
	Since \(\mathcal{T}\) is saturated, it follows that
	\(\mathcal{O}_p\in\mathcal{T}\). In the case \( (m,n) = (2,3),(3,3)\) we can
	look at a similar sequence using
	\(\mathcal{O}_{J(p,X_g)}(-1)\otimes\chi^{-1}\in\mathcal{A}\). A similar
  argument shows \(\mathcal{O}_q\in\mathcal{T}\) and we are finished.
\end{proof}

\section{Semi-orthogonality}
\label{sec:sod}

\subsection{Geometric subcategories.}

\subsubsection{The subcategory \(\mathcal{D}_{fg}\).} 

As before, we let \(\mathcal{D}_{fg}\) be the image of the fully-faithful
functor \(\Xi_{-m,-n}\).  Let us compute the semi-orthogonality \(\langle
\mathcal{D}_{fg},\mathcal{A}\rangle\). We have the formula
\begin{align*}
	\Ext^\ast_{[X/\mu_d]}(\mathcal{O}_X(-i)\otimes\chi^{-j},\mathcal{O}_{l(p,q)}(-m)
	\otimes\chi^{-n})
  \cong H^\ast(\mathcal{O}_{l(p,q)}(i-m)\otimes\chi^{j-n}).
\end{align*}

\begin{lemma}
	There is a semi-orthogonal decomposition \(\langle
	\mathcal{D}_{fg},\mathcal{A}\rangle\). 
	\label{lem:sod-dfg-a}
\end{lemma}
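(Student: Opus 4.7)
The plan is to reduce the Hom-vanishing to an equivariant cohomology computation on the lines $l(p,q)\cong\mathbb{P}^1$. First I would note that both subcategories are saturated: $\mathcal{D}_{fg}$ because $\Xi_{-m,-n}$ is a full embedding out of the saturated category $\mathcal{D}(X_f\times X_g)$ (Theorem \ref{thm:xi-qff} together with Proposition \ref{prop:sat-subcat-admissible}), and $\mathcal{A}$ by Proposition \ref{prop:sod-A} combined with Proposition \ref{prop:sod-saturatedness}. To establish $\langle\mathcal{D}_{fg},\mathcal{A}\rangle$ it therefore suffices to show
\[
\Ext^\ast_{[X/\mu_d]}\bigl(\mathcal{O}_X(-i)\otimes\chi^{-j},\ \Xi_{-m,-n}(\mathcal{O}_{(p,q)})\bigr)=0
\]
for every exceptional line-bundle generator of $\mathcal{A}$ and every closed point $(p,q)\in X_f\times X_g$. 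Indeed, $a\in{}^\perp\mathcal{D}_{fg}$ is equivalent to $\Xi^L_{-m,-n}(a)=0$, and by the spanning-class property of skyscrapers on $X_f\times X_g$ combined with adjunction this reduces exactly to the displayed vanishing.

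By the formula already recorded in the excerpt, the task becomes the $\mu_d$-invariant vanishing of $H^\ast(l(p,q),\mathcal{O}_{\mathbb{P}^1}(i-m)\otimes\chi^{j-n})$. Using the equivariant geometry of $[\mathbb{P}^1/\mu_d]$ described in Section \ref{sec:geometry-pn}, the line $l(p,q)$ carries a coordinate $t$ of trivial weight (from the $x$-direction) and a coordinate $s$ of weight $\chi^{-1}$ (from the $y$-direction), so a $\mu_d$-invariant section of $\mathcal{O}(k)\otimes\chi^\ell$ corresponds to a monomial $t^as^b$ with $a+b=k$, $a,b\geq0$, and $b\equiv\ell\pmod d$. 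Equivariant Serre duality on $[\mathbb{P}^1/\mu_d]$, whose dualising sheaf is $\mathcal{O}(-2)\otimes\chi^{-1}$ by an equivariantised Euler sequence, then converts the $H^1$-vanishing into an analogous $H^0$-computation for the Serre-dual pair $(-k-2,-\ell-1)$.

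The final step is to enumerate the $(i,j)$-ranges from the three blocks $\mathcal{A}_1,\mathcal{A}_2,\mathcal{A}_3$ and verify the monomial/residue arithmetic case by case. For $\mathcal{A}_3$ (the range $0\leq j\leq i\leq m-1$) one has $i-m\leq -1$ so $H^0$ is automatic, and the $H^1$-side reduces to searching for $b\in[0,m-i-2]$ with $b\equiv n-j-1\pmod d$; since the hypotheses $d\geq n\geq m\geq 2$ and $j\leq i\leq m-2$ force the residue $n-j-1$ to lie strictly above $m-i-2$ and strictly below $d$, no such $b$ exists. The blocks $\mathcal{A}_1$ and $\mathcal{A}_2$ fall to the same modular-arithmetic template and are completely parallel to the inequalities already rehearsed in the proof of Proposition \ref{prop:sod-A}.

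I expect the main obstacle to be purely combinatorial: there is no single unified formula, so one has to track the overlapping index ranges of the three blocks and, in each of them, separately verify that the residue condition $b\equiv\ell\pmod d$ (respectively its Serre-dual counterpart) fails to admit a solution in the allowed interval. The conceptual content is modest once the reduction to $[\mathbb{P}^1/\mu_d]$ is in place; only the bookkeeping, and a careful use of the hypothesis $d\geq n\geq m\geq 2$ at each step, requires attention.
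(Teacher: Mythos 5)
Your proposal is correct and takes essentially the same route as the paper: reduce (via adjunction and the spanning class of skyscrapers on $X_f\times X_g$) to the equivariant $\Ext$-vanishing between the line-bundle generators of $\mathcal{A}$ and $\Xi_{-m,-n}(\mathcal{O}_{(p,q)})$, then pass to equivariant Serre duality on $l(p,q)\cong[\mathbb{P}^1/\mu_d]$ and the same monomial/residue inequality $n-j-1>m-i-2$ in the $\mathcal{A}_3$ block. The only cosmetic difference is that you make the initial reduction via saturatedness and the adjoint functor explicit, whereas the paper quotes the $\Ext$-formula directly before performing the identical computation.
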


\begin{proof}
	We only check the semi-orthogonality \(\langle
	\mathcal{D}_{fg},\mathcal{A}_3\rangle\) as the other computations are similar.
	The objects in \(\mathcal{A}_3\) are of the form
	\(\mathcal{O}(-i)\otimes\chi^{-j}\), where \(0\leq i\leq m-1\) and \(0\leq
	j\leq i\). In this case we have \(\mathcal{O}(i-m)\otimes\chi^{j-n}\) is a negative
	line bundle so
	\[
		H^1(\mathcal{O}(i-m)\otimes\chi^{j-n})\cong
		H^0(\mathcal{O}(m-i-2)\otimes\chi^{n-j-1})
	\]

	Since \(i\geq j\geq 0\) we have
	\[
		n-1\geq n-j-1\geq n-i-1
	\]
	and so we need a monomial of the form \(x^ay^{n-j-1}\) where \(a\geq 0\) and
	\(a+n-j-1 = m-i-2\). However
	\[
		a+n-j-1\geq n-i-1\geq m-i-1>m-i-2.
	\]
  which is impossible.
\end{proof}

\subsubsection{\(\mathcal{D}_f\) and \(\mathcal{D}_g\).} 

For \(\mathcal{D}_f\) to be present in the semi-orthogonal decomposition, we
need \(d>n\) and for \(\mathcal{D}_g\) to be present, we require \(d>m\).

\begin{lemma}
	We have the semi-orthogonality \(\langle \mathcal{D}_g,\mathcal{D}_f,\mathcal{A}\rangle\).
	\label{lem:sod-df-a}
\end{lemma}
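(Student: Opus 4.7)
The plan is to verify the three pairwise vanishings encoded in $\langle \mathcal{D}_g, \mathcal{D}_f, \mathcal{A}\rangle$: namely $\Hom(\mathcal{D}_f, \mathcal{D}_g[\ast]) = 0$, $\Hom(\mathcal{A}, \mathcal{D}_f[\ast]) = 0$, and $\Hom(\mathcal{A}, \mathcal{D}_g[\ast]) = 0$. The first is immediate from the disjointness of supports: applying $\iota_{f\ast}\dashv\iota_f^!$ rewrites the Hom as $\Hom_{[X_f/\mu_d]}(\mathcal{E}\otimes\chi^k, \iota_f^!(\iota_{g\ast}\mathcal{F}\otimes\chi^{k'}))$, and $\iota_f^!\iota_{g\ast} = 0$ since $X_f \cap X_g = \emptyset$.

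For $\Hom(\mathcal{A}, \mathcal{D}_g[\ast])$, I would apply the adjunction $L\iota_g^{\ast}\dashv\iota_{g\ast}$. For $\mathcal{L} = \mathcal{O}_X(-i)\otimes\chi^{-j}$ in $\mathcal{A}$ and $\iota_{g\ast}\mathcal{E}\otimes\chi^k \in \mathcal{D}_g^k$ this yields
\[
	\Ext^\ast_{[X/\mu_d]}(\mathcal{L},\iota_{g\ast}\mathcal{E}\otimes\chi^k) \cong \Ext^\ast_{[X_g/\mu_d]}(\iota_g^\ast\mathcal{L},\mathcal{E}\otimes\chi^k).
\]
The crucial step is to identify $\iota_g^\ast\mathcal{L}$ as an \emph{equivariant} sheaf. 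By the same Euler sequence analysis that produced $\omega_{[\mathbb{P}^{m+n-1}/\mu_d]} \cong \mathcal{O}(-m-n)\otimes\chi^{-n}$ in Section \ref{sec:geometry-pn}, the fiber of $\mathcal{O}(-1)$ at any point of $H_y$ carries $\mu_d$-weight $\chi$; consequently $\iota_g^\ast\mathcal{O}_X(-i) \cong \mathcal{O}_{X_g}(-i)\otimes\chi^i$, and so $\iota_g^\ast\mathcal{L}\cong \mathcal{O}_{X_g}(-i)\otimes\chi^{i-j}$. Since $\mu_d$ acts trivially on $X_g$, the Ext reduces to $\Ext^\ast_{X_g}(\mathcal{O}_{X_g}(-i),\mathcal{E})$ equipped with $\mu_d$-weight $k+j-i$, and its invariants vanish whenever $k+j-i\not\equiv 0\pmod d$.

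The remaining task is combinatorial. For $k\in\{-(d-m),\ldots,-1\}$ and any $(i,j)$ indexing a generator of $\mathcal{A}_1$, $\mathcal{A}_2$, or $\mathcal{A}_3$, the defining inequalities in Section \ref{ssec:a-category} force $j-i\in[-(m-1),0]$; hence $k+j-i\in[-(d-1),-1]$, which is never divisible by $d$. The verification of $\Hom(\mathcal{A},\mathcal{D}_f[\ast])=0$ is analogous but simpler: at $H_x$ the line bundle $\mathcal{O}(-1)$ has trivial $\mu_d$-weight (again by the Euler computation), so $\iota_f^\ast\mathcal{L}\cong\mathcal{O}_{X_f}(-i)\otimes\chi^{-j}$, the relevant Ext-weight becomes $k+j$, and with $k\in\{1,\ldots,d-n\}$ and $j\in\{0,\ldots,n-1\}$ one has $k+j\in[1,d-1]$, again avoiding $0\pmod d$.

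The main obstacle is the equivariant bookkeeping: the $\chi^i$-twist acquired by $\iota_g^\ast\mathcal{O}_X(-i)$ is precisely what prevents the orthogonality from collapsing, and checking the bound $j-i\in[-(m-1),0]$ uniformly over the somewhat intricate indexing of $\mathcal{A}_1$ and $\mathcal{A}_2$ is the only step that requires a short, systematic case analysis.
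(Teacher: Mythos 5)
Your proof is correct and takes essentially the same approach as the paper: in both cases the key observation is that the Ext groups reduce, via adjunction, to vector spaces on $X_f$ or $X_g$ carrying a pure $\mu_d$-weight determined by the character twists and the restriction of the line bundle, and one checks that this weight lies in $[-(d-1),-1]$ or $[1,d-1]$ and hence is never $0\bmod d$. The one difference is that the paper carries this out only for skyscraper sheaves on $X_f$ and dismisses the $\mathcal{D}_g$ case as ``analogous,'' while you make explicit the nontrivial $\chi^{i}$-twist acquired by $\iota_g^\ast\mathcal{O}_X(-i)$ (the paper's Remark after Lemma \ref{lem:missing-line-bundles} confirms $\mathcal{O}_X(iH)|_{X_g}\cong\mathcal{O}_{X_g}(ih)\otimes\chi^{-i}$), which is precisely the point that makes the $\mathcal{D}_g$ computation not literally a mirror image of the $\mathcal{D}_f$ one.
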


\begin{proof}
	The subcategories \(\mathcal{D}_g\) and \(\mathcal{D}_f\) are orthogonal. We
	only show \(\mathcal{D}_f\) is right orthogonal to \(\mathcal{A}\), the claim
	that \(\mathcal{D}_g\) is also right orthogonal is analgous. 
	
  Let \(\mathcal{O}(-i_1)\otimes\chi^{-i_2}\in\mathcal{A}\) and
  \(\mathcal{O}_p\otimes \chi^{-j}\in D_f\). By Proposition
  \ref{prop:sod-spanning}, it suffices to show
  \(\mathrm{Ext}^\ast_X(\mathcal{O}(-i_1)\otimes
  \chi^{-i_2},\mathcal{O}_p\otimes\chi^{-j})^{\mu_d} = 0\) for each such pair. We
  compute
	\[ 
		\Ext^\ast_{[X/\mu_d]}(\mathcal{O}(-i_1)\otimes\chi^{-i_2},\mathcal{O}_p\otimes\chi^{-j})
    \cong \Gamma(\mathcal{O}_p\otimes\chi^{i_2-j})^{\mu_d}.
	\]
  By definition of \(\mathcal{A}\) and \(\mathcal{D}_f\), we have \(0\leq
  i_2\leq n-1\) and \(n\leq j\leq d-1\). Hence, \(i_2-j\) is not divisible by
  \(d\) which means that the invariants of \(\mathcal{O}_p\otimes\chi^{i_2-j}\)
  vanish.
\end{proof}

\begin{lemma}
	We have the semi-orthogonality \(\langle
	\mathcal{D}_{g1},\mathcal{D}_{fg},\mathcal{D}_{g2},\mathcal{D}_f\rangle\).
	\label{lem:sod-dfg-df}
\end{lemma}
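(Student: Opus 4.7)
The plan is to verify each pairwise vanishing in the claimed semi-orthogonality by reducing to Ext vanishings between generators of the subcategories. Generators of $\mathcal{D}_f$, $\mathcal{D}_{g1}$, $\mathcal{D}_{g2}$ are the twisted skyscrapers $\iota_{f\ast}\mathcal{O}_p \otimes \chi^i$ and $\iota_{g\ast}\mathcal{O}_q \otimes \chi^j$ at closed points, and generators of $\mathcal{D}_{fg}$ are the twisted line sheaves $\Xi_{-m,-n}(\mathcal{O}_{(p,q)}) = \mathcal{O}_{l(p,q)}(-m) \otimes \chi^{-n}$.

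The pairs not involving $\mathcal{D}_{fg}$ are immediate: since $X^{\mu_d} = \iota_f(X_f) \sqcup \iota_g(X_g)$, the semi-orthogonality $\Hom(\mathcal{D}_f, \mathcal{D}_{gi}) = 0$ holds by disjoint supports, and $\Hom(\mathcal{D}_{g2}, \mathcal{D}_{g1}) = 0$ follows from Corollary \ref{cor:dg-sod}, as $\mathcal{D}_{g1}$ and $\mathcal{D}_{g2}$ are consecutive sub-blocks of the established decomposition of $\mathcal{D}_g$.

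For the three orthogonalities involving $\mathcal{D}_{fg}$, first dispose of the case where the chosen fixed point is not on the line: if $p \neq p'$ then $\iota_f(p) \notin l(p',q')$ (since $l(p',q')$ meets $X_f$ only in $\iota_f(p')$), so the supports are disjoint and the Ext vanishes; similarly for $q \neq q'$. In the coincident case I would reduce to a local Koszul computation at the fixed point. Near $\iota_f(p)$, the line $l(p,q)$ is cut out inside $X$ by a regular sequence consisting of $m-2$ weight-zero equations (the $x$-directions) together with $n-1$ weight $-1$ linear combinations of the $y$-coordinates; near $\iota_g(q)$ the roles of $x$ and $y$ swap. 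Applying $\Hom_R(-, N)$ to the Koszul resolution of the skyscraper, the differentials along the directions where the generators vanish on the line die, isolating a single nontrivial Koszul co-differential whose cohomology concentrates in one degree, tensored with exterior powers of the remaining dual generators. Tracking the $\mu_d$-weights through this exterior algebra, and accounting for the fiber of $\mathcal{O}(-m)$ at the fixed point (trivial at $\iota_f(p)$, the character $\chi^m$ at $\iota_g(q)$), yields explicit weight sets for $\Ext^*$ at the point.

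With these weight sets in hand, one extracts the isotypic component dictated by the twists $\chi^i$, $\chi^j$, $\chi^{-n}$, $\chi^{j'}$. For $\mathcal{D}_f \perp \mathcal{D}_{fg}$, the Ext weights lie in $\{1, \ldots, n\}$ while the required weights lie in $\{0, n+1, \ldots, d-1\}$, and these sets partition $\mathbb{Z}/d$; for $\mathcal{D}_{g2} \perp \mathcal{D}_{fg}$, Ext weights $\{0, \ldots, m-1\}$ and required weights $\{m, \ldots, n-1\}$ are visibly disjoint; and for $\mathcal{D}_{fg} \perp \mathcal{D}_{g1}$ (handled either by running the same local analysis in the opposite direction, or by Serre duality) a short case analysis on $d$ relative to $2m$ and $m+n$ shows Ext weights $\{d-2m+1, \ldots, d-m\}$ and required weights $\{1-m, \ldots, d-m-n\}$ are disjoint mod $d$. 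The main obstacle throughout is the careful bookkeeping of $\mu_d$-linearizations: keeping straight the fiber characters of $\mathcal{O}(-m)$ at the two distinct fixed points, the weights attached to Koszul differentials, and the twists on both sides of each Hom, so that the clean weight-disjointness emerges.
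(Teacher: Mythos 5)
Your proposal takes essentially the same route as the paper: the non-$\mathcal{D}_{fg}$ pairs are immediate from disjoint supports and Corollary \ref{cor:dg-sod}, and the three pairs involving $\mathcal{D}_{fg}$ are handled by the same local equivariant Koszul computation at a fixed point, tracking $\mu_d$-weights of $\mathrm{Ext}^\ast(\mathcal{O}_\text{pt},\mathcal{O}_l)$ and checking disjointness mod $d$ after the appropriate character shift. You carry out explicitly the two cases the paper dismisses as "analogous," and your count of local coordinates near $\iota_f(p)$ ($m-2$ of weight zero and $n$ of weight $-1$, yielding Ext-weights in $\{1,\dots,n\}$) is actually more careful than the paper's stated $k[[x_1,\dots,x_{m-1},y_1,\dots,y_{n-1}]]$ giving $\{1,\dots,n-1\}$, though the discrepancy is harmless for the disjointness conclusion.
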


\begin{proof}
	Again, we only prove the semi-orthogonality \( \langle \mathcal{D}_{fg},
	\mathcal{D}_f\rangle\) the other claims are analagous.  The only possible
	nonzero extension groups between members of the standard spanning classes of
  \(\mathcal{D}_f\) and \(\mathcal{D}_{fg}\) are
	\[
		\Ext^\ast_{[X/\mu_d]}(\mathcal{O}_p
    \otimes\chi^{-j},\mathcal{O}_{l(p,q)}(-m)\otimes\chi^{-n}) \cong
    \left(\Ext^\ast_X(\mathcal{O}_p,\mathcal{O}_{l(p,q)})\otimes\chi^{j-n}\right)^{\mu_d},
	\]
  where \(l(p,q)\) is the line between \(p\in X_f\) and and any point \(q\in X_g\). 

  We use Theorem \ref{thm:cks-ss} with \(S = \{p\}, T= l(p,q), W= \{p\}\). In
  this case, we have
  \begin{align*}
    \Ext^q_X(\mathcal{O}_p,\mathcal{O}_{l(p,q)}) \otimes\chi^{j-n}\cong
    \Lambda^{q-1}(k^{\oplus m-2}\oplus \chi^{\oplus n-1})\otimes\chi^{j-n+1}
  \end{align*}
  Since \(n\leq j\leq d-1\), the weights on the extension groups will be between
  \(1\) and \(d-1\). Thus there are no invariants.
\end{proof}

This completes the semi-orthogonality claim in the Main Theorem. It
remains to see fullness.

\section{Koszul Complexes of Joins and Orbits.}
\label{sec:koszul}

\subsection{Koszul complexes}
Let \(\mathcal{T} = \langle \mathcal{D}_{g1},\mathcal{D}_{fg},\mathcal{D}_{g2},
\mathcal{D}_f,\mathcal{A}\rangle\). We prove fullness by showing \(\mathcal{T}\)
has a spanning class. This is done in Section \ref{sec:full}. To do so, we need
to construct more sheaves in \(\mathcal{T}\) using the subcategories present.
Essential to these constructions is the Koszul complex of a regular section of a
vector bundle.

Let \(E\) be a \(\mu_d\)-equivariant locally free sheaf of rank \(r\) over \(X\)
and \(s\in \Gamma(E)^{\mu_d}\) be an equivariant global section. Then we have
the corresponding Koszul complex:
\[
	0\to \Lambda^rE^\vee\to \Lambda^{r-1}E^\vee\to \cdots \to
	E^\vee\xrightarrow{s^\vee} \mathcal{O}_X.
\]
Denote this complex by \(\mathcal{K}(E,s)\).

If the zero locus \(Z(s)\subset X\) of \(s\) is codimension \(r\), then the
Koszul complex is exact and is a locally free resolution of
\(\mathcal{O}_{Z(s)}\). Even if the Koszul complex is not exact we can still
learn information from its cohomology sheaves, see Section
\ref{ssec:lines-in-join}.

\subsection{Free orbits away from \(J(X_f,X_g)\)}
\label{ssec:free-orbits}

Let \(Z\subset X\) be a free orbit of the \(\mu_d\) action away from the join of
\(X_f\) and \(X_g\) inside \(X\), i.e.\ pick \([p:q]\notin X\setminus
J(X_f,X_g)\) and let \(Z = \{[p:\lambda q]\mid \lambda\in\mu_d\}\). Let
\(s_1,\ldots,s_{m-1}\) be linear sections in the variables \(x_1,\ldots,x_m\) that
cut out \(p\) and \(t_1,\ldots,t_{n-1}\) be linear sections in the variables
\(y_1,\ldots,y_n\) that cut out \(q\). Then it follows, as in the proof of
Theorem \ref{thm:mt-specialcase}, \(Z = V(s_1,\ldots,s_{m-1},t_1,\ldots,t_{n-1})\subset X\).
Hence, \(Z\) is the vanishing locus of the section
\(s=(s_1,\ldots,s_{m-1},t_1,\ldots,t_{n-1})\in\Gamma(E)\), where \(E = \left(
  \bigoplus_{i=1}^{m-1}\mathcal{O}_X(1)\right)\oplus\left(
\bigoplus_{j=1}^{n-1}\mathcal{O}_X(1)\otimes\chi  \right)\). 

The summands of \(\mathcal{K}(E,s)\) are precisely the sheaves that occur as
generators of \(\mathcal{A}\). We conclude for any free orbit \(Z\) in \(X\setminus
J(X_f,X_g)\), we have \(\mathcal{O}_Z\in\mathcal{A}\).

\subsection{Lines in \(J(X_f,X_g)\)}
\label{ssec:lines-in-join}

Let \(p\in X_f\) and \(q\in X_g\). As before, denote by \(l(p,q)\cong
\mathbb{P}^1\) the line joining \(\iota_f(p)\) to \(\iota_g(q)\). Contrary to
the case of free orbits outside of \(J(X_f,X_g)\), the structure sheaves of both
fixed orbits and free orbits in the join are not complete intersections.
Moreover, the structure sheaf \(\mathcal{O}_{l(p,q)}\) is not a complete
intersection subvariety.  We can still consider a Koszul complex using the same
construction of the section \(s = (s_1,\ldots,s_{m-1},t_1,\ldots,t_{n-1})\in
\Gamma(E)\), where \(E\) is as before, such that \(V(s) = l(p,q)\).

\begin{lemma}
	As above, let \(\mathcal{K}(E,s)\) be the Koszul complex associated to the
  section \(s\in\Gamma(E)\). Then
	\[
		\mathcal{H}^\ast(\mathcal{K}(E,s))= \begin{cases} 
			\mathcal{O}_{l(p,q)} & \ast = 0 \\ 
			\mathcal{O}_{l(p,q)}(-d) & \ast = -1 \\
			0 & \ast\neq 0,-1
		\end{cases}
	\]
	\label{lem:koszul-lines}
\end{lemma}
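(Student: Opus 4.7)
The plan is to relate the Koszul complex on $X$ to a genuine Koszul resolution on the ambient projective space. The point is that $X$ has dimension $m+n-2$ and $l(p,q)$ has codimension $m+n-3$, while $E$ has rank $m+n-2$, so the section $s$ fails to be regular on $X$ by exactly one degree; this is what produces the extra cohomology sheaf in degree $-1$.

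First, I would extend $E$ to $E' = \bigoplus_{i=1}^{m-1}\mathcal{O}_{\mathbb{P}^{m+n-1}}(1)\oplus\bigoplus_{j=1}^{n-1}\mathcal{O}_{\mathbb{P}^{m+n-1}}(1)\otimes\chi$ on $\mathbb{P}^{m+n-1}$ and regard $s$ as an equivariant section of $E'$. On $\mathbb{P}^{m+n-1}$ the line $l(p,q)$ is the complete intersection of the $m+n-2$ linear forms making up $s$, so $\mathcal{K}(E',s)$ is an equivariant locally free resolution of $\mathcal{O}_{l(p,q)}$. Since the components of $\mathcal{K}(E',s)$ are locally free, the restriction equals the derived restriction:
\[
\mathcal{K}(E,s) = \mathcal{K}(E',s)\otimes_{\mathcal{O}_{\mathbb{P}^{m+n-1}}}\mathcal{O}_X = \mathcal{O}_{l(p,q)}\otimes^L_{\mathcal{O}_{\mathbb{P}^{m+n-1}}}\mathcal{O}_X.
\]
Thus the problem reduces to computing $\mathcal{T}or^{\mathcal{O}_{\mathbb{P}^{m+n-1}}}_\bullet(\mathcal{O}_{l(p,q)},\mathcal{O}_X)$.

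Next, resolve $\mathcal{O}_X$ by the two-term equivariant Koszul complex
\[
0\to\mathcal{O}_{\mathbb{P}^{m+n-1}}(-d)\xrightarrow{f\oplus g}\mathcal{O}_{\mathbb{P}^{m+n-1}}\to\mathcal{O}_X\to 0,
\]
where the character is trivial because $f\oplus g$ has $y$-weight $-d\equiv 0\pmod d$. Tensoring with $\mathcal{O}_{l(p,q)}$ produces the two-term complex $\mathcal{O}_{l(p,q)}(-d)\xrightarrow{(f\oplus g)|_{l(p,q)}}\mathcal{O}_{l(p,q)}$. Parametrizing $l(p,q)$ by $[\alpha p:\beta q]$, we have $(f\oplus g)([\alpha p:\beta q]) = \alpha^d f(p)+\beta^d g(q) = 0$ since $p\in X_f$ and $q\in X_g$. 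Hence the restricted map is identically zero, and we read off $\mathcal{T}or_0 = \mathcal{O}_{l(p,q)}$ and $\mathcal{T}or_1 = \mathcal{O}_{l(p,q)}(-d)$, with vanishing in higher degrees; this yields the asserted cohomology sheaves of $\mathcal{K}(E,s)$.

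The only place requiring care is keeping track of the $\mu_d$-linearizations through these tensor manipulations: one must check that the resolution of $\mathcal{O}_X$ and the resolution of $\mathcal{O}_{l(p,q)}$ are genuinely equivariant with the claimed character twists (which is immediate from the weights of the $y_i$) and that no extra character appears in $\mathcal{O}_{l(p,q)}(-d)$. This is routine since the equivariant structure is entirely determined by how $\mu_d$ acts on the defining sections, but it is the only subtle bookkeeping step in an otherwise formal $\mathrm{Tor}$ calculation.
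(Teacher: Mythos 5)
Your proof is correct and takes a genuinely different, and in fact cleaner, route than the paper's. The paper's argument keeps one of the linear sections aside, uses Bezout to produce an intermediate complete intersection $J$ (a union of $d$ lines, the join of $d$ points of $X_f$ with $q$), shows the full Koszul complex is quasi-isomorphic to a two-term complex $\mathcal{O}_J(-1)\to\mathcal{O}_J$, and then unwinds a four-term exact sequence to identify the two cohomology sheaves. This requires a genericity choice (so that the $d$ points are distinct) and some careful tracking of how the remaining linear form acts on each component of $J$. Your argument instead observes that $l(p,q)$ is already a complete intersection of the same $m+n-2$ linear forms in the ambient $\mathbb{P}^{m+n-1}$, so $\mathcal{K}(E,s)$ is the (derived) restriction to $X$ of a genuine resolution of $\mathcal{O}_{l(p,q)}$; this converts the problem into computing $\mathcal{T}or^{\mathcal{O}_{\mathbb{P}^{m+n-1}}}_\bullet(\mathcal{O}_{l(p,q)},\mathcal{O}_X)$, which you do via the two-term resolution $\mathcal{O}(-d)\xrightarrow{f\oplus g}\mathcal{O}$ of the hypersurface $X$. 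The crucial observation that the restriction $(f\oplus g)|_{l(p,q)}$ is identically zero because $l(p,q)\subset X$ is exactly what produces the extra $\mathcal{T}or_1 = \mathcal{O}_{l(p,q)}(-d)$, and it makes the degree-$(-d)$ twist transparent rather than something extracted from a product $s_2\cdots s_d$ of linear sections as in the paper. You also correctly note that $f\oplus g$ is $\mu_d$-invariant (the $y$-weight is $-d\equiv 0$), so the resolution of $\mathcal{O}_X$ carries no character twist, which pins down the equivariant structure on $\mathcal{O}_{l(p,q)}(-d)$ with no extra bookkeeping. In short: same result, but your Tor calculation replaces the paper's Bezout-plus-filtration argument, avoids the auxiliary join $J$, and makes the origin of both the shift and the equivariant structure immediate.
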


\begin{proof}
  Since \(X\) is a closed embedding, pushforward is exact so we may compute
  cohomology sheaves on \(\mathbb{P}^{m+n-1}\). To that end, consider the
  analogous Koszul complex \(\mathcal{K}(\mathbb{P}^{m+n-1},E,s)\) on
  \(\mathbb{P}^{m+n-1}\). There is a short exact sequence of complexes
  \[
    0\to \mathcal{K}(\mathbb{P}^{m+n-1},E,s)(-d)\to
    \mathcal{K}(\mathbb{P}^{m+n-1},E,s)\to \mathcal{K}(E,s)\to 0
  \]
  on \(\mathbb{P}^{m+n-1}\). Since \(l(p,q)\) is a complete intersection in
  \(\mathbb{P}^{m+n-1}\), the Koszul complex is a locally free resolution and
  hence we have an isomorphism \(\mathcal{K}(\mathbb{P}^{m+n-1},E,s)\cong
  \mathcal{O}_{l(p,q)}\). Since the mapping
  \[
    \mathcal{H}^0(\mathcal{K}(\mathbb{P}^{m+n-1},E,s))\to
    \mathcal{H}^0(\mathcal{K}(E,s))
  \]
  is restriction and both sheaves are isomorphic to \(\mathcal{O}_{l(p,q)}\), it
  is an isomorphism. Thus the long exact sequence of
  cohomology objects yields the additional isomorphism
  \[
    \mathcal{H}^{-1}(\mathcal{K}(E,s))\cong
    \mathcal{H}^0(\mathcal{K}(\mathbb{P}^{m+n-1},E,s)(-d))\cong
    \mathcal{O}_{l(p,q)}(-d).
  \]
\end{proof}

\subsection{Projective cones}

The subvariety \(\iota_g\colon X_g\to X\) is a complete intersection. Indeed, it is the
zero locus of the section \(s_{X_g} = (x_1,\ldots,x_m)\) of the vector bundle
\(E_{X_g}
= \mathcal{O}_X(1)^{\oplus m}\). The summands of the Koszul resolution,
\(\mathcal{K}(E_{X_g},s_{X_g})\) are of the form \(\mathcal{O}_X(-m+i)\) for \(i
= 0,\ldots,m\).

\begin{lemma}
  For \(i=1,\ldots,d-m\), the components of
	\(\mathcal{K}(E_{X_g},s_{X_g})(-(n-1)+i+t)\otimes\chi^{-(n-1)+t}\) are in
	\(\mathcal{T}\) for \(t=0,\ldots,n-1\).
	\label{lem:missing-line-bundles}
\end{lemma}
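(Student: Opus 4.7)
I would prove this by induction on $i$, using the Koszul complex as the mechanism to propagate membership in $\mathcal{T}$ among line bundles. Write $\alpha_i = -(n-1)+i+t$ and $\beta = -(n-1)+t$; the twisted complex $\mathcal{K}(E_{X_g}, s_{X_g})(\alpha_i)\otimes\chi^\beta$ is a resolution of $\iota_{g\ast}\mathcal{O}_{X_g}(\alpha_i)\otimes\chi^\beta$ (since $s_{X_g}=(x_1,\ldots,x_m)$ is a regular section cutting out $X_g\subset X$), and its $(-j)$-th term is the line bundle $\mathcal{O}_X(\alpha_i-j)^{\oplus\binom{m}{j}}\otimes\chi^\beta$ for $j=0,\ldots,m$. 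When $\beta\in\{m-d,\ldots,-1\}\pmod d$, the resolved object lies in $\mathcal{D}_g\subset\mathcal{T}$, so the Koszul complex, regarded as an object of $\mathcal{D}[X/\mu_d]$, lies in $\mathcal{T}$. The stupid-truncation triangles of the Koszul complex then present the total as an iterated cone whose associated graded pieces are the shifted line bundle summands $\mathcal{O}_X(\alpha_i-j)\otimes\chi^\beta[j]$. Consequently, if the total and all but one of the $m+1$ line-bundle components lie in $\mathcal{T}$, so does the missing one (using that $\mathcal{T}$ is triangulated and closed under direct summands).

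For fixed $t$ with $\beta$ in this good range, I would establish the base case $i=1$ by matching components against the explicit description of $\mathcal{A}_1,\mathcal{A}_2,\mathcal{A}_3$ in Section \ref{ssec:a-category}; at least $m$ of the $m+1$ components will lie in $\mathcal{A}$, and the Koszul mechanism above supplies the remaining one. For the inductive step $i\to i+1$, the Koszul complexes at parameters $i$ and $i+1$ share the $m$ line-bundle terms at degrees $\alpha_i-m+1,\ldots,\alpha_i$; the inductive hypothesis places these in $\mathcal{T}$, and applying the Koszul mechanism to the $(i+1)$-st complex then also yields the new top term $\mathcal{O}_X(\alpha_{i+1})\otimes\chi^\beta$.

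The main obstacle will be handling those pairs $(i,t)$ for which $\beta\notin\{m-d,\ldots,-1\}\pmod d$ (for instance $t=n-1$, where $\beta=0$); in these cases the total of the $g$-Koszul no longer lies in $\mathcal{T}$, so the direct argument breaks. To cover these I would switch to the analogous $f$-Koszul complex $\mathcal{K}(E_{X_f},s_{X_f})$, whose terms take the form $\mathcal{O}_X(\alpha-j)\otimes\chi^{\beta'+j}$ and spread across $n+1$ consecutive character twists (because the variables $y_j$ carry $\mu_d$-weight $-1$, so the equivariant normal bundle to $X_f$ is $\mathcal{O}_X(1)^{\oplus n}\otimes\chi^{-1}$). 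Choosing $\beta'\in\{1,\ldots,d-n\}$ places the total in $\mathcal{D}_f\subset\mathcal{T}$, and the same iterated-cone argument transfers line bundles across character gaps, producing the missing out-of-range components from the ones already obtained in the earlier steps. The detailed bookkeeping — verifying that every required $(a,b)=(\alpha_i-j,\beta)$ is reachable from the seeds in $\mathcal{A}$ through a finite sequence of $g$-Koszul and $f$-Koszul transfers — is the technical heart of the argument.
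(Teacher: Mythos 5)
Your overall strategy — induction on $i$, with the Koszul complex supplying a relation among $m+1$ line bundles and the resolved sheaf, so that knowing all but one term of the relation lies in $\mathcal{T}$ forces the last — is the same as the paper's. However, the proposal contains a concrete error in the equivariant bookkeeping that creates an imaginary obstruction and derails the second half of the argument.

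The error: you claim $\mathcal{K}(E_{X_g},s_{X_g})(\alpha_i)\otimes\chi^\beta$ resolves $\iota_{g\ast}\mathcal{O}_{X_g}(\alpha_i)\otimes\chi^\beta$ and then test whether $\beta\in\{m-d,\ldots,-1\}$. But the restriction of an equivariant hyperplane bundle to $X_g$ carries a nontrivial twist: $\mathcal{O}_X(aH)|_{X_g}\cong\mathcal{O}_{X_g}(a)\otimes\chi^{-a}$ (this is the remark the paper inserts immediately after the lemma statement). Hence the complex actually resolves $\iota_{g\ast}\bigl(\mathcal{O}_{X_g}(\alpha_i)\bigr)\otimes\chi^{\beta-\alpha_i}$, and with $\alpha_i=-(n-1)+i+t$, $\beta=-(n-1)+t$ one has $\beta-\alpha_i=-i$. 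Since the lemma ranges over $i=1,\ldots,d-m$, this twist is always in $\{m-d,\ldots,-1\}$, i.e.\ the resolved sheaf is always in $\mathcal{D}_g^{-i}\subset\mathcal{T}$, for every $t=0,\ldots,n-1$. There is no ``bad range'' of $\beta$: the case $t=n-1$, $\beta=0$, which you flag as problematic, gives the resolved sheaf $\iota_{g\ast}\mathcal{O}_{X_g}(i)\otimes\chi^{-i}\in\mathcal{D}_g$ just like every other $t$.

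Consequently the third paragraph of your proposal — the switch to the $f$-Koszul complex and the unfinished ``technical heart'' of chaining $f$- and $g$-transfers — is solving a nonexistent problem, and is in any case only sketched, not carried out. With the twist computed correctly, the paper's direct two-step argument (base case $i=1$ by matching the nonzero-degree Koszul terms against $\mathcal{A}$; inductive step using the previously produced degree-zero terms) goes through uniformly for all $t$, and no second family of Koszul complexes is needed.
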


\begin{proof}
	We check the base case \(i =1\). In this case, we have explicitly:
	\begin{align*}
		\mathcal{K}(E_{X_g},s_{X_g})(1) &\to \mathcal{O}_{X_g}(1)\otimes\chi^{-1} \\
		\mathcal{K}(E_{X_g},s_{X_g})\otimes\chi^{-1} &\to
		\mathcal{O}_{X_g}\otimes\chi^{-1} \\
		\vdots &\to \vdots \\
		\mathcal{K}(E_{X_g},s_{X_g})(-(n-1)+1)\otimes\chi^{-(n-1)} &\to
		\mathcal{O}_{X_g}(-(n-1)+1)\otimes\chi^{-1}
	\end{align*}
	All line bundles appearing in the resolution are already in \(\mathcal{T}\)
	except the line bundle appearing in degree zero. 
	Since \(\mathcal{O}_{X_g}(j)\otimes\chi^{-1}\in \mathcal{T}\) for all \(j\),
	we know that the line bundles in degree zero are also in \(\mathcal{T}\).

  Suppose the assertion is true for \(1,\ldots,i\) we show that it is true for
  \(i+1\leq d-m\). In which case we have the following twists of the above
  diagram:
	\begin{align*}
		\mathcal{K}(E_{X_g},s_{X_g})(i+1) &\to
		\mathcal{O}_{X_g}(i+1)\otimes\chi^{-i-1} \\
		\mathcal{K}(E_{X_g},s_{X_g})(i)\otimes\chi^{-1} &\to
		\mathcal{O}_{X_g}(i)\otimes\chi^{-i-1} \\
		\vdots &\to \vdots \\
		\mathcal{K}(E_{X_g},s_{X_g})(-(n-1)+i+1)\otimes\chi^{-(n-1)} &\to
		\mathcal{O}_{X_g}(-(n-1)+i+1)\otimes\chi^{-i-1}.
	\end{align*}
	Again, all of the sheaves except the rightmost part of the resolution are
	already in \(\mathcal{T}\) by induction. That the line bundles in degree zero
	are in \(\mathcal{T}\) follows since
	\(\mathcal{O}_{X_g}(j)\otimes\chi^{-i-1}\in \mathcal{T}\) as \(i+1\leq d-m\).
\end{proof}

\begin{lemma}
	Let \(J(X_f,q)\) denote the join of \(X_f\) and \(q\) inside \(X\). Then
	\[
		\mathcal{O}_{J(X_f,q)}(i)\in\mathcal{T}
	\]
	for \(i = 0,\ldots,d-m\).
	\label{lem:join-xf-q}
\end{lemma}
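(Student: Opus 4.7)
The plan is to exhibit $\mathcal{O}_{J(X_f,q)}(i)$ as the convolution of an equivariant Koszul resolution whose every term is already known to lie in $\mathcal{T}$.

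First I would apply a $\mu_d$-equivariant linear change of the $y$-variables---permissible because $\mu_d$ scales all $y_j$ by the same character---to normalize $q = [0{:}\cdots{:}0{:}1{:}0{:}\cdots{:}0]$. Since $q \in X_g$, this forces $g(y_1,0,\ldots,0) = g(q)\,y_1^d = 0$, so the $\mathbb{P}^m \subset \mathbb{P}^{m+n-1}$ spanned by $X_f$ and $q$ meets $X$ precisely in the projective cone $J(X_f,q)$. The $n-1$ linear forms $y_2,\ldots,y_n$ are then equivariant global sections of $\mathcal{O}_X(1)\otimes\chi$ forming a regular sequence on $X$ with vanishing locus $J(X_f,q)$. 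The associated Koszul complex is an equivariant resolution of $\mathcal{O}_{J(X_f,q)}$ whose terms are $\mathcal{O}_X(-\kappa)\otimes\chi^{-\kappa}$ for $\kappa = 0,\ldots,n-1$. Twisting by $\mathcal{O}_X(i)$ resolves $\mathcal{O}_{J(X_f,q)}(i)$ by the line bundles $\mathcal{O}_X(i-\kappa)\otimes\chi^{-\kappa}$, $\kappa = 0,\ldots,n-1$.

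Second, I would verify each such summand lies in $\mathcal{T}$. When $i = 0$, the line bundle $\mathcal{O}_X(-\kappa)\otimes\chi^{-\kappa}$ is an exceptional generator of $\mathcal{A}_3$ for $0 \le \kappa \le m-1$, and appears (with weight $\chi^{-\kappa}$) among the generators of $\mathcal{A}_2$ indexed by $j = n-1-\kappa$ for $m \le \kappa \le n-1$. When $1 \le i \le d-m$ and $\kappa \in \{0,\ldots,n-1\}$, I would invoke Lemma \ref{lem:missing-line-bundles} with parameter $t = n-1-\kappa$ and the same $i$; the degree-zero summand of $\mathcal{K}(E_{X_g},s_{X_g})(-(n-1)+i+t)\otimes\chi^{-(n-1)+t}$ is then precisely $\mathcal{O}_X(i-\kappa)\otimes\chi^{-\kappa}$, which Lemma \ref{lem:missing-line-bundles} places in $\mathcal{T}$.

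Once every term of the resolution is known to be in $\mathcal{T}$, iterated cones inside the triangulated subcategory $\mathcal{T}$ assemble $\mathcal{O}_{J(X_f,q)}(i)$. The only substantive obstacle is the parameter bookkeeping in the second step: one must check that the Koszul exponents $(i-\kappa, -\kappa)$ always land in the region covered by either $\mathcal{A}$ (when $i = 0$) or Lemma \ref{lem:missing-line-bundles} (when $i \ge 1$), which reduces to a short check using the standing hypothesis $d \ge n \ge m \ge 2$.
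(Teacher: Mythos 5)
Your proposal is correct and follows essentially the same route as the paper: resolve $\mathcal{O}_{J(X_f,q)}(i)$ by the equivariant Koszul complex on the $n-1$ sections $y_2,\ldots,y_n$ of $\mathcal{O}_X(1)\otimes\chi$ and identify the resulting terms $\mathcal{O}_X(i-\kappa)\otimes\chi^{-\kappa}$ with the degree-zero components appearing in Lemma \ref{lem:missing-line-bundles}. You are in fact a bit more careful than the paper's one-line citation, since Lemma \ref{lem:missing-line-bundles} is stated only for $i\ge 1$ and you correctly handle $i=0$ separately by locating $\mathcal{O}_X(-\kappa)\otimes\chi^{-\kappa}$ inside $\mathcal{A}_3$ (for $\kappa\le m-1$) and $\mathcal{A}_2$ (for $m\le\kappa\le n-1$).
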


\begin{proof}
	The subvariety \(J(X_f,q)\) is a complete interesection with Koszul resolution:
	\[
		(\otimes_{i=1}^{n-1} \mathcal{O}_X(-1)\otimes\chi^{-1} \to
		\mathcal{O}_X)\to \mathcal{O}_{J(X_f,q)}.
	\]
  After twisting by \(\mathcal{O}_X(i)\), all summands of components of the
  Koszul resolution are of the form 
	\[
		\mathcal{O}_X(-(n-1)+i+t)\otimes\chi^{-(n-1)+t}.
	\]
	The statement now follows from Lemma \ref{lem:missing-line-bundles}.
\end{proof}

As \(X_f\) is also a complete intersection subvariety, we have the following similar
statement for \(\mathcal{O}_{J(p,X_g)}\) with \(p\in X_f\).

\begin{lemma}
	Let \(\mathcal{O}_{J(p,X_g)}\) denote the join of \(p\in X_f\) with \(X_g\).
	Then
	\[
		\mathcal{O}_{J(p,X_g)}(i)\otimes\chi^i\in\mathcal{T}
	\]
	for \(i = 0,\ldots,d-n\).
	\label{lem:join-xg-p}
\end{lemma}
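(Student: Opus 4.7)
The plan is to mirror the proof of Lemma \ref{lem:join-xf-q} with the roles of $X_f$ and $X_g$ interchanged, adjusting for the fact that the $x_i$-variables now carry trivial $\mu_d$-character while the $y_j$'s carry character $\chi^{-1}$. First I would place $p=[1:0:\cdots:0]$ by a linear change of coordinates, so that $J(p,X_g)\subset X$ becomes the complete intersection cut out by the $m-1$ equivariant sections $x_2,\ldots,x_m\in\Gamma(\mathcal{O}_X(1))^{\mu_d}$, each with trivial character. The associated Koszul resolution
\[
\bigotimes_{k=2}^{m}\bigl(\mathcal{O}_X(-1)\xrightarrow{x_k}\mathcal{O}_X\bigr)\to\mathcal{O}_{J(p,X_g)},
\]
twisted by $\mathcal{O}_X(i)\otimes\chi^i$, has general component $\mathcal{O}_X(-(m-1)+i+t)\otimes\chi^i$ for $t=0,\ldots,m-1$. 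Since $\mathcal{T}$ is closed under cones, it suffices to verify each such line bundle lies in $\mathcal{T}$.

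For $i=0$ the components reduce to $\mathcal{O}_X(-(m-1)+t)$ with $t=0,\ldots,m-1$, which sit directly in $\mathcal{A}_3$. For $1\leq i\leq d-n$, I would establish and invoke the symmetric analog of Lemma \ref{lem:missing-line-bundles} with $X_f$ in place of $X_g$: the Koszul resolution of $\mathcal{O}_{X_f}$ is induced by $y_1,\ldots,y_n\in\Gamma(\mathcal{O}_X(1)\otimes\chi)^{\mu_d}$, and its general term is $\mathcal{O}_X(-j)\otimes\chi^{-j}$. Twisting by $\mathcal{O}_X(-(m-1)+i+t)\otimes\chi^i$, the cohomology becomes $\iota_{f\ast}\mathcal{O}_{X_f}(-(m-1)+i+t)\otimes\chi^i\in\mathcal{D}_f^{i}\subset\mathcal{T}$ (here one uses that $\mathcal{O}_X(a)|_{X_f}$ carries trivial $\chi$-weight since the $x$-coordinates do). By induction on $i$, one verifies that every term of this second resolution except the degree-zero one already lies in $\mathcal{T}$, so triangulated closure pushes the remaining term $\mathcal{O}_X(-(m-1)+i+t)\otimes\chi^i$ into $\mathcal{T}$ as well. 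Applying this to every component of the $J(p,X_g)$ resolution concludes $\mathcal{O}_{J(p,X_g)}(i)\otimes\chi^i\in\mathcal{T}$.

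The principal obstacle is the bookkeeping inside the $X_f$-analog of Lemma \ref{lem:missing-line-bundles}: one must track precisely which pairs $(a,b)$ place $\mathcal{O}_X(a)\otimes\chi^b$ in the decomposition $\mathcal{A}=\langle\mathcal{A}_1,\mathcal{A}_2,\mathcal{A}_3\rangle$, and show that the inductive hypothesis propagates through the full range $(i,t)\in\{1,\ldots,d-n\}\times\{0,\ldots,m-1\}$. This is the mirror of the calculation already carried out for Lemma \ref{lem:missing-line-bundles}, and should go through by the same pattern, reflecting the dual $\chi$-weights on the $x$- and $y$-blocks.
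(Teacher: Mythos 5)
Your proposal is correct and is essentially the same argument the paper intends (the paper itself only says ``similar'' and leaves the details to the reader). You correctly identify that $J(p,X_g)$ is the complete intersection cut out by the $m-1$ sections $x_2,\ldots,x_m$ of trivial weight, that the twisted Koszul resolution has components $\mathcal{O}_X(-(m-1)+i+t)\otimes\chi^i$ with the $\chi$-weight constant in $t$ (unlike the Lemma~\ref{lem:join-xf-q} case, where the weight varies with $t$), and that the needed $X_f$-analog of Lemma~\ref{lem:missing-line-bundles} is supplied by the Koszul resolution of $X_f$ via $y_1,\ldots,y_n\in\Gamma(\mathcal{O}_X(1)\otimes\chi)^{\mu_d}$, whose cohomology after twisting lands in $\mathcal{D}_f^i$ because $\mathcal{O}_X(a)|_{X_f}$ carries trivial character. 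The induction on $i$ closes exactly as you describe: for $1\leq s<i$ the components $\mathcal{O}_X(k-s)\otimes\chi^{i-s}$ fall under the inductive hypothesis, while for $s\geq i$ the pairs $(a,b)=(k-s,i-s)$ satisfy $b\leq 0$, $b\geq -(n-1)$, and $b-(m-1)\leq a\leq b$, placing them in $\mathcal{A}_3$, $\mathcal{A}_2$, or $\mathcal{A}_1$ according to whether $a\geq -(m-1)$, $-(n-1)\leq a\leq -m$, or $a\leq -n$.
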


\section{Fullness.}
\label{sec:full}

Using Proposition \ref{prop:subcat-spanning-class}, our goal is to show that
\(\mathcal{T}=\langle
\mathcal{D}_{g1},\mathcal{D}_{fg},\mathcal{D}_{g2},\mathcal{D}_f,\mathcal{A}\rangle\)
contains a spanning class of \(\mathcal{D}[X/\mu_d]\). Recall, Example
\ref{ex:spanning-class-stack}, if \(\mathcal{X}\) is a smooth DM stack with coarse
moduli space, the sheaves
\[
  \Omega = \{\mathcal{O}_{x,\xi}\mid x\colon\mathrm{Spec}(k)\to
  \mathcal{X}\text{ and }\xi\in\mathrm{Irr}(\mathrm{Aut}(x))\}
\]
form a spanning class.

For \(\mathcal{X} = [X/\mu_d]\), these sheaves are the structure sheaves of the
free orbits and twists of the structure sheaves of fixed orbits by all
characters. In \S \ref{ssec:free-orbits} we saw that the structure sheaves of
the free orbits away from \(J(X_f,X_g)\) have Koszul resolutions using the
sheaves in \(\mathcal{A}\). We will get the remaining sheaves by showing for all
\(p\in X_f\) and \(q\in X_g\) the structure sheaves of free orbits on \(l(p,q)\)
and all twists of the structure sheaves of \(\mathcal{O}_p\) and
\(\mathcal{O}_q\) are in \(\mathcal{T}\).

\subsection{Other kernels.}

It will be convenient to use the images of other Fourier-Mukai kernels from
\(\Xi_{-m,-n}\) to \(\Xi_{d-m,0}\) and \(\Xi_{d-n,d-n}\). We justify their use
in this subsection.

Using Theorem \ref{thm:swapping-functors} we must verify that the image of the
spanning class \(\{\mathcal{O}_{(p,q)}\}\) under \(\Xi_{d-m,0}\) and
\(\Xi_{d-n,d-n}\) lies in \(\mathcal{T}\).  We start by
verifying the free orbits along the join are contained in
\(\mathcal{T}\).

\begin{lemma}
	For all \(p\in X_f\) and \(q\in X_g\) we have \(\mathcal{O}_{l(p,q)}(-d)\) and
	\(\mathcal{O}_{l(p,q)}\) in \(\mathcal{T}\).
	\label{lem:missing-p1-lines}
\end{lemma}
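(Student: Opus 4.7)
The plan is to extract both $\mathcal{O}_{l(p,q)}$ and $\mathcal{O}_{l(p,q)}(-d)$ from the Koszul complex $K = \mathcal{K}(E,s)$ of Lemma \ref{lem:koszul-lines}, combined with the image of $\mathcal{D}_{fg}$ under the Fourier--Mukai embedding and the auxiliary sheaves constructed in Section \ref{sec:koszul}.

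First I would observe that every summand of every wedge $\Lambda^k E^\vee$ appearing in the Koszul complex is an equivariant line bundle of the form $\mathcal{O}_X(-k)\otimes\chi^{-b}$, where $b$ ranges over $[\max(0,k-m+1),\min(k,n-1)]$. Tabulating these index pairs against the explicit generators of $\mathcal{A}_1,\mathcal{A}_2,\mathcal{A}_3$ from \S\ref{ssec:a-category}, one verifies case by case on $k\in[0,m-1]$, $k\in[m,n-1]$, $k\in[n,m+n-2]$ that every such summand lies in one of the three blocks of $\mathcal{A}$. Hence $K\in\mathcal{A}\subset\mathcal{T}$. Applying Lemma \ref{lem:koszul-lines} and canonical truncation then produces a distinguished triangle
\[
\mathcal{O}_{l(p,q)}(-d)[1] \to K \to \mathcal{O}_{l(p,q)} \to \mathcal{O}_{l(p,q)}(-d)[2]
\]
in $\mathcal{D}[X/\mu_d]$. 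Since $K\in\mathcal{T}$ and $\mathcal{T}$ is triangulated, exhibiting either outer term in $\mathcal{T}$ immediately yields the other.

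To supply one of these, the plan is to combine the Fourier--Mukai datum $\Xi_{-m,-n}(\mathcal{O}_{(p,q)}) = \mathcal{O}_{l(p,q)}(-m)\otimes\chi^{-n}\in\mathcal{D}_{fg}\subset\mathcal{T}$ with the join sheaves $\mathcal{O}_{J(X_f,q)},\mathcal{O}_{J(p,X_g)}\in\mathcal{T}$ from Lemmas \ref{lem:join-xf-q}--\ref{lem:join-xg-p} and the additional line bundles provided by Lemma \ref{lem:missing-line-bundles}. Concretely, the same check as in Step 1 shows that $\mathcal{O}_J$ and $\mathcal{O}_J(-1)$ both lie in $\mathcal{A}\subset\mathcal{T}$, where $J = \bigcup_{i=1}^d l(p_i,q)$; the four-term exact sequence
\[
0\to \mathcal{O}_{l(p,q)}(-d)\to \mathcal{O}_J(-1)\xrightarrow{s_{m-1}} \mathcal{O}_J\to \mathcal{O}_{l(p,q)}\to 0
\]
from the proof of Lemma \ref{lem:koszul-lines} then gives a second two-term complex in $\mathcal{T}$ with the same cohomology as $K$. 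Splicing this with the $\mathcal{D}_{fg}$ object via an auxiliary Koszul or join construction produces the desired triangle isolating one of the two target sheaves.

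The main obstacle is the coordination of equivariant twists: the sheaf coming from $\mathcal{D}_{fg}$ carries the ``wrong'' twist $\mathcal{O}_X(-m)\otimes\chi^{-n}$, while $\mathcal{O}_{l(p,q)}$ and $\mathcal{O}_{l(p,q)}(-d)$ carry trivial character, and the line bundles in $\mathcal{A}$ augmented by Lemma \ref{lem:missing-line-bundles} only populate the character range $[-(n-1),0]$. Bridging this character gap --- so that the extension of $\mathcal{O}_{l(p,q)}(-m)\otimes\chi^{-n}$ by terms in $\mathcal{T}$ actually lands on an untwisted line sheaf --- is the delicate part, and it requires a careful choice of auxiliary Koszul complex (or of a twist of the join resolutions) whose terms can be verified one by one to lie in $\mathcal{A}$ or in the augmented collection of Lemma \ref{lem:missing-line-bundles}. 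Once one of $\mathcal{O}_{l(p,q)}$ or $\mathcal{O}_{l(p,q)}(-d)$ is placed in $\mathcal{T}$, the triangle derived from $K$ supplies the other automatically.
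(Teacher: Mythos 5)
Your first step is sound and matches the paper: every wedge summand of the Koszul complex $\mathcal{K}(E,s)$ cutting out $l(p,q)$ already lives in $\mathcal{A}$, so $\mathcal{K}(E,s)\in\mathcal{T}$, and the canonical filtration of $\mathcal{K}(E,s)$ supplied by Lemma \ref{lem:koszul-lines} places $\mathcal{O}_{l(p,q)}$ and $\mathcal{O}_{l(p,q)}(-d)[1]$ in a triangle over an object of $\mathcal{T}$. Showing one of the two in $\mathcal{T}$ thus suffices. That is also exactly what the paper's proof reduces to.

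However, the remainder of your proposal never actually supplies that one object. Your ``Step 2'' — checking that $\mathcal{O}_J$ and $\mathcal{O}_J(-1)$ lie in $\mathcal{A}$ — is correct but redundant: by the proof of Lemma \ref{lem:koszul-lines} the two-term complex $[\mathcal{O}_J(-1)\to\mathcal{O}_J]$ is quasi-isomorphic to $\mathcal{K}(E,s)$ itself, so you have simply re-derived the same triangle and gained nothing. The decisive step is then deferred to ``splicing this with the $\mathcal{D}_{fg}$ object via an auxiliary Koszul or join construction,'' and you explicitly flag the character bookkeeping as the delicate part without resolving it. That is precisely the gap, and the resources you propose for bridging it (the augmented line bundle collection of Lemma \ref{lem:missing-line-bundles}) are not the ones that do the job.

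The paper's bridge is simpler and uses the twists of the fixed points, i.e.\ $\mathcal{D}_g$ and $\mathcal{D}_f$, rather than auxiliary Koszul complexes or join sheaves. Starting from $\mathcal{O}_{l(p,q)}(-m)\otimes\chi^{-n}=\Xi_{-m,-n}(\mathcal{O}_{(p,q)})\in\mathcal{D}_{fg}\subset\mathcal{T}$, one repeatedly applies the divisor short exact sequence on $l(p,q)\cong\mathbb{P}^1$ that deletes a single fixed point. Deleting $q$ produces quotients $\mathcal{O}_q\otimes\chi^{m-n},\ldots,\mathcal{O}_q\otimes\chi^{-1}$, all of which lie in $\mathcal{D}_g\subset\mathcal{T}$, and yields $\mathcal{O}_{l(p,q)}(-n)\otimes\chi^{-n}\in\mathcal{T}$. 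Deleting $p$ then produces quotients $\mathcal{O}_p\otimes\chi,\ldots,\mathcal{O}_p\otimes\chi^{d-n}$, all in $\mathcal{D}_f\subset\mathcal{T}$, and yields $\mathcal{O}_{l(p,q)}(-d)\otimes\chi^{-d}=\mathcal{O}_{l(p,q)}(-d)\in\mathcal{T}$. Without identifying this mechanism — in particular, without invoking the components $\mathcal{D}_g$ and $\mathcal{D}_f$, which your sketch never uses — the argument does not close.
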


\begin{proof}
	By Lemma \ref{lem:koszul-lines}, it suffices to show
	\(\mathcal{O}_{l(p,q)}(-d)\in\mathcal{T}\). We have the exact sequences
	\[
		0\to \mathcal{O}_{l(p,q)}(-m-i-1)\otimes\chi^{-n} \to
		\mathcal{O}_{l(p,q)}(-m-i)\otimes \chi^{-n} \to
		\mathcal{O}_q\otimes\chi^{-(n-m)+i}\to 0.
	\]
	Since \(\mathcal{O}_{l(p,q)}(-m)\otimes\chi^{-n},
	\mathcal{O}_q\otimes\chi^{-(n-m)},\ldots,\mathcal{O}_q\otimes\chi^{-1} \in
	\mathcal{T}\), we have
	\(\mathcal{O}_{l(p,q)}(-n)\otimes\chi^{-n}\in\mathcal{T}\) by induction.

	Now consider the sequences
	\[
		0\to \mathcal{O}_{l(p,q)}(-n-i-1)\otimes\chi^{-n-i-1} \to
		\mathcal{O}_{l(p,q)}(-n-i)\otimes\chi^{-n-i} \to \mathcal{O}_p
		\otimes\chi^{-n-i}\to 0.
	\]
	Since \(\mathcal{O}_{l(p,q)}(-n)\otimes \chi^{-n},
	\mathcal{O}_p\otimes\chi^1,\ldots,
	\mathcal{O}_p\otimes\chi^{d-n}\in\mathcal{T}\), we have
	\(\mathcal{O}_{l(p,q)}(-d)\in\mathcal{T}\) by induction and this completes the
	proof.
\end{proof}

\begin{corollary}
  For every free orbit \(Z\) on the join \(J(X_f,X_g)\), we have
  \(\mathcal{O}_Z\in\mathcal{T}\).
  \label{cor:free-orbit-join}
\end{corollary}

\begin{proof}
  Any free orbit \(Z\) on the join is contained in \(l(p,q)\) for some \(p\in X_f\) and \(q\in
  X_g\). Moreover, the structure sheaf fits in the short exact sequence
  \[
    0\to\mathcal{O}_{l(p,q)}(-d)\to\mathcal{O}_{l(p,q)}\to\mathcal{O}_Z\to 0
  \]
  and so \(\mathcal{O}_Z\in\mathcal{T}\).
\end{proof}

\begin{lemma}
	For all \(p\in X_f\) and \(q\in X_g\) we have \(\mathcal{O}_{l(p,q)}(d-m),
	\mathcal{O}_{l(p,q)}(d-n)\otimes\chi^{d-n}\in\mathcal{T}\).
	\label{lem:ample-kernels}
\end{lemma}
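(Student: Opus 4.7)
The plan is to adapt the inductive strategy used in Lemma \ref{lem:missing-p1-lines}, but climbing upward in the line-bundle degree rather than downward, starting from the base case $\mathcal{O}_{l(p,q)} \in \mathcal{T}$ already established there. The two relevant equivariant sections of $\mathcal{O}_{l(p,q)}(1)$ are the invariant section $u$ vanishing at $q$ and the section $v\in\Gamma(\mathcal{O}(1)\otimes\chi)$ vanishing at $p$; multiplication by each will yield short exact sequences whose cokernels are skyscrapers at $q$ or $p$, twisted by characters determined by the fiber representations $\mathcal{O}_{l(p,q)}(1)|_p = k$ and $\mathcal{O}_{l(p,q)}(1)|_q = \chi^{-1}$ (the same local computations already used in the proof of Lemma \ref{lem:missing-p1-lines}).

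For $\mathcal{O}_{l(p,q)}(d-n)\otimes\chi^{d-n}$, the plan is to iterate the short exact sequences
\[
0 \to \mathcal{O}_{l(p,q)}(i)\otimes\chi^i \xrightarrow{v} \mathcal{O}_{l(p,q)}(i+1)\otimes\chi^{i+1} \to \mathcal{O}_p\otimes\chi^{i+1} \to 0
\]
for $i=0,\ldots,d-n-1$: each cokernel $\mathcal{O}_p\otimes\chi^{i+1}$ lies in $\mathcal{D}_f\subset\mathcal{T}$, since $\mathcal{D}_f$ contains $\mathcal{O}_p\otimes\chi^j$ for $j=1,\ldots,d-n$. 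Analogously, for $\mathcal{O}_{l(p,q)}(d-m)$ the plan is to iterate
\[
0 \to \mathcal{O}_{l(p,q)}(i) \xrightarrow{u} \mathcal{O}_{l(p,q)}(i+1) \to \mathcal{O}_q\otimes\chi^{-(i+1)} \to 0
\]
for $i=0,\ldots,d-m-1$: here each cokernel $\mathcal{O}_q\otimes\chi^{-(i+1)}$ lies in $\mathcal{D}_g\subset\mathcal{T}$, since $\mathcal{D}_g$ contains $\mathcal{O}_q\otimes\chi^j$ for $j=m-d,\ldots,-1$. In both cases, induction starting from $\mathcal{O}_{l(p,q)}\in\mathcal{T}$ closes the argument.

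There is no significant obstacle: the twist ranges $1,\ldots,d-n$ for $\mathcal{D}_f$ and $m-d,\ldots,-1$ for $\mathcal{D}_g$ are tailored precisely so that each skyscraper cokernel needed in these two inductions is already available in $\mathcal{T}$. The only careful point will be to track the character twists on the cokernels, and this reduces to the same fiber computation at the two fixed points already verified for Lemma \ref{lem:missing-p1-lines}.
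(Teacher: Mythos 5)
Your proof is correct and is essentially the argument the paper intends: the paper's proof is the one-line remark ``use the exact sequences in the proof of Lemma \ref{lem:missing-p1-lines}'', and those sequences are exactly your two families of short exact sequences (up to reparametrization and a twist by $\chi^{-n}$), read in the ascending rather than descending direction starting from $\mathcal{O}_{l(p,q)}\in\mathcal{T}$. Your bookkeeping of the cokernel characters matches the fiber weights used there, and the needed skyscrapers land in $\mathcal{D}_f$ and $\mathcal{D}_g$ precisely as you say.
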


\begin{proof}
	Use the exact sequences in the proof of Lemma \ref{lem:missing-p1-lines}.
\end{proof}

\begin{proposition}
	The images of the functors \(\Xi_{d-m,0}\) and \(\Xi_{d-n,d-n}\) are contained
  in \(\mathcal{T}\).
  \label{prop:images}
\end{proposition}

\begin{proof}
	By Proposition \ref{prop:sod-saturatedness}, it follows from Lemma
	\ref{lem:ample-kernels} and Theorem \ref{thm:swapping-functors}.
\end{proof}

\subsection{Proof of Fullness}

\subsubsection{Strategy.}
We now compute \(\Xi_{d-m,0}(\mathcal{O}_{X_f\times\{q\}}(-i))\) and
\(\Xi_{d-n,d-n}(\mathcal{O}_{\{p\}\times X_g}(-j))\) for various \(i=0,\ldots,m-1\) and
\(j=0,\ldots,n-1\) to show that this gives us the missing sheaves:
\(\mathcal{O}_q\otimes\chi^{0,1,\ldots,m-1},
\mathcal{O}_p\otimes\chi^{0,-1,\ldots,-(n-1)}\). In particular, we show there
exists triangles
\[
	\mathcal{O}_{J(X_f,q)}(d-m-i)\to \Xi_{d-m,0}(\mathcal{O}_{X_f\times\{q\}}
	(-i))\to T_q\to
\]
and
\[
	\mathcal{O}_{J(p,X_g)}(d-n-j)\otimes\chi^{d-n} \to
	\Xi_{d-n,d-n}(\mathcal{O}_{\{p\}\times X_g})\to T_p\to 
\]
for every \(p\in X_f\) and \(q\in X_g\), where \(T_q\) and \(T_p\) are certain
torsion sheaves supported at \(q\) and \(p\), respectively. Since the first two
objects are in \(\mathcal{T}\) we have \(T_q,T_p\in \mathcal{T}\). We then
build a filtration of \(T_q,T_p\) and argue then that \(\mathcal{T}\) contains
the remaining elements of the spanning class.

\subsubsection{Factoring through the blowup.}

Take \(q\in X_g\) and let \(\mathbb{P}^m\) be the linear subspace spanned by
\(x_1,\ldots,x_m,q\) or equivalently as the join of \(\mathbb{P}^{m-1}\) and
\(q\) inside \(\mathbb{P}^{m+n-1}\). Consider the following commutative diagram
\[
	\begin{tikzcd}
		\mathbb{P}(\mathcal{O}_{X_f}(-1)\boxplus
		\mathcal{O}_{X_g}(-1))=Y \ar{d} & \mathbb{P}(\mathcal{O}_{X_f}(-1)\oplus\mathcal{O}_{X_f})
		= Z \ar{r}{\iota_q} \ar{d}{\pi} \ar{dr}{\sigma} \ar{l}&
		\Bl=\Bl_q\mathbb{P}^m\ar{d}{\beta} \\
		X_f\times X_g & \ar{l}{j}X_f & \mathbb{P}^m
	\end{tikzcd}
\]
where \(j\) includes \(X_f\) via \(j(x) = (x,q)\), \(Z\) is the fibered product,
and \(\sigma\) also denotes the restriction of \(\sigma\colon Y\to
\mathbb{P}^{m+n-1}\) to \(Z\) which factors through \(\mathrm{Bl}\). Notice
\(\sigma(Z)\) is a degree \(d\) hypersurface in \(\mathbb{P}^m\) as it is the
projective cone over \(X_f\). The mapping
\(\iota_q\) includes \(Z\) as the strict transform of \(d\) times a hyperplane divisor
\(H\) in \(\mathbb{P}^m\) along \(\beta\). Thus we have an equivalence
\(\iota_q(Z)\equiv dH-dE\) in \(\Bl\). Here
\(\mathbb{P}^m\) has coordinates \([x_1:\cdots:x_m:y]\) and \(\mu_d\) acts by
scaling the \(y\) coordinate. The \(\mu_d\)-action lifts to \(\Bl\) fixing the
exceptional divisor pointwise and thus rendering the entire diagram \(\mu_d\)-equivariant.

Recall, in \S \ref{ssec:serre} the canonical bundle of
\([\mathbb{P}^m/\mu_d]\) was computed to be
\(\mathcal{O}_{\mathbb{P}^m}(-m-1)\otimes\chi^{-1}\). The usual formula for the canonical bundle of a
blowup yields \(\omega_{\Bl}\cong \beta^\ast\mathcal{O}_{\mathbb{P}^m}\otimes
\mathcal{O}((m-1)E)\) which admits a \(\mu_d\)-linearization since the divisors
involved are invariant under the \(\mu_d\)-action. It remains to determine if
there is a twist by a character. Restricting to \(\Bl\setminus E\) gives the
isomorphism
\[
	\omega_{\Bl}|_{\Bl\setminus E}\cong
	\beta^\ast\omega_{\mathbb{P}^m}|_{\Bl\setminus E}
\]
and so it follows \(\omega_{\Bl}\cong \beta^\ast\mathcal{O}_{\mathbb{P}^m}(-m-1)
\otimes \mathcal{O}_{\Bl}((m-1)E)\otimes\chi^{-1}\).

Let \(H_1\) be a hyperplane section of \(X_f\) and \(H\) be a hyperplane section
of \(\mathbb{P}^m\) which restricts to \(H_1\) under the inclusion
\(X_f\to\mathbb{P}^m\) where \([x]\mapsto [x:0]\). Then \(H-E|_Z\cong \pi^\ast
H_1\). 

\subsubsection{Equivariant Grothendieck Duality.}

\begin{theorem}
	There is a natural isomorphism:
	\[
		\mathbf{R}\beta_\ast\mathcal{O}_{[\Bl/\mu_d]}(D)\cong
		\mathbf{R}\mathcal{H}om_{[\mathbb{P}^m/\mu_d]}
    (\mathbf{R}\beta_\ast\mathcal{O}(-D+(m-1)E),\mathcal{O})
	\]
	for any \(\mu_d\)-invariant divisor \(D\) on \(\mathrm{Bl}\).
	\label{thm:equivariant-grothendieck-duality}
\end{theorem}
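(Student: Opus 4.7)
The plan is to deduce this from classical Grothendieck duality applied to the proper birational morphism $\beta\colon\Bl\to\mathbb{P}^m$ between smooth varieties of equal dimension, and then to observe that the entire construction respects the $\mu_d$-action. First I would invoke Grothendieck duality in the form
\[
\mathbf{R}\beta_\ast\mathbf{R}\mathcal{H}om_{\Bl}(\mathcal{F},\beta^!\mathcal{G})\cong\mathbf{R}\mathcal{H}om_{\mathbb{P}^m}(\mathbf{R}\beta_\ast\mathcal{F},\mathcal{G}),
\]
with $\mathcal{F}=\omega_{\Bl}(-D)$ and $\mathcal{G}=\omega_{\mathbb{P}^m}$. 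Because $\beta$ is birational between smooth varieties of the same dimension, the relative dualizing complex is concentrated in degree zero and $\beta^!\omega_{\mathbb{P}^m}\cong\omega_{\Bl}$. The inner $\mathbf{R}\mathcal{H}om$ on the left then collapses to $\mathcal{O}_{\Bl}(D)$ because $\omega_{\Bl}(-D)$ is a line bundle, producing the non-equivariant version of the claimed identity.

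Next I would endow every object with its canonical $\mu_d$-linearization. Since $D$ is $\mu_d$-invariant, $\mathcal{O}_{\Bl}(D)$ acquires a canonical equivariant structure; the line bundles $\omega_{\Bl}$ and $\omega_{\mathbb{P}^m}$ are determinants of cotangent bundles, hence carry intrinsic $\mu_d$-linearizations; and because $\beta$ is $\mu_d$-equivariant, the identification $\beta^!\omega_{\mathbb{P}^m}\cong\omega_{\Bl}$ is canonical and therefore equivariant (any twist by a character would have to match the twist coming from $\omega_{\Bl/\mathbb{P}^m}$, which is itself canonically defined). Under the equivalence $\mathcal{D}[-/\mu_d]\cong\mathcal{D}(-)^{\mu_d}$, all the derived functors $\mathbf{R}\beta_\ast$, $\mathbf{L}\beta^\ast$, $\mathbf{R}\mathcal{H}om$, and tensor product lift verbatim to the equivariant categories.

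Finally, one invokes the functoriality of the Grothendieck duality isomorphism itself. It is built from the trace map $\mathbf{R}\beta_\ast\beta^!\to\Id$ together with natural adjunctions, all of which are characterized by universal properties and hence natural in both arguments. Consequently, the isomorphism automatically intertwines any $\mu_d$-linearizations imposed on $\mathcal{F}$ and $\mathcal{G}$, so the non-equivariant isomorphism of the first step promotes to the claimed equivariant isomorphism in $\mathcal{D}[\mathbb{P}^m/\mu_d]$.

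The main potential obstacle is purely a bookkeeping issue: ensuring that no spurious character twist sneaks in when comparing $\omega_{\Bl}$ with $\beta^\ast\omega_{\mathbb{P}^m}\otimes\omega_{\Bl/\mathbb{P}^m}$, since the character twists in $\omega_{[\Bl/\mu_d]}$ and $\omega_{[\mathbb{P}^m/\mu_d]}$ are both nontrivial (of the form $\chi^{-1}$ as computed above). This is resolved by observing that $\omega_{\Bl/\mathbb{P}^m}$, defined via the cotangent complex of $\beta$, has a canonical equivariant structure with no additional twist, so the two $\chi^{-1}$ contributions on the two sides cancel in the standard way.
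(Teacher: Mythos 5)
Your proof is correct and follows the same strategy as the paper's one-sentence argument: apply classical Grothendieck duality to $\beta$, then observe that because $\beta$ is $\mu_d$-equivariant and the duality isomorphism is natural (hence commutes with the automorphisms coming from $\mu_d$), the isomorphism descends to the equivariant derived categories. You simply fill in the intermediate steps — choosing $\mathcal{F}=\omega_{\Bl}(-D)$, collapsing $\mathbf{R}\mathcal{H}om_{\Bl}(\omega_{\Bl}(-D),\omega_{\Bl})\cong\mathcal{O}_{\Bl}(D)$, and checking that no stray character twist appears — that the paper leaves implicit.
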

 
\begin{proof}
	This follows since \(\beta\) is \(\mu_d\)-equivariant and the usual
	Grothendieck duality is natural, hence commutes with automorphisms.
\end{proof}

The divisors on \(\Bl\) are, up to equivalence, well known to be of the form
\(aH + bE\) for \(a,b\in\mathbb{Z}\). Using the projection formula, we have
\[
  \mathbf{R}\beta_\ast\omega_{\Bl}(-(aH+bE))\cong
	(R\beta_\ast\mathcal{O}_{[\Bl/\mu_d]}( (m-1-b)E))\otimes
	\omega_{[\mathbb{P}^m/\mu_d]}(-aH)
\]
and by Grothendieck duality
\[
	\mathbf{R}\beta_\ast(\mathcal{O}_{[\Bl/\mu_d]}(aH+bE))\cong
	\mathbf{R}\mathcal{H}om_{[\mathbb{P}^m/\mu_d]}
	(\mathbf{R}\beta_\ast(\mathcal{O}_{[\Bl/\mu_d]}(
	(m-1-b)E),\mathcal{O}_{\mathbb{P}^m})(-aH).
\]

\begin{remark}
  Since \(\{q\}\subset\mathbb{P}^m\) is of codimension \(m\), there is a canonical isomorphism
	\[
		\mathbf{R}\beta_\ast\mathcal{O}_{[\Bl/\mu_d]}(kE)\cong
    \mathcal{O}_{[\mathbb{P}^m/\mu_d]}
	\] 
  for \(k = 0,\ldots,m-1\). For \(k\geq m\), we still have
  \(\beta_\ast\mathcal{O}_{\mathrm{Bl}}(kE)\cong\mathcal{O}_{[\mathbb{P}^m/\mu_d]}\)
  but \(\mathbf{R}^{m-1}\beta_\ast\mathcal{O}_{\mathrm{Bl}}(kE)\) is non-zero.

  If \(k>0\), then \(\mathbf{R}\beta_\ast\mathcal{O}_{[\Bl/\mu_d]}(-kE)\cong
	\mathcal{I}_q^k\), where \(\mathcal{I}_q\) is the ideal sheaf for the closed
	subscheme \(\{q\}\) in \(\mathbb{P}^m\). Moreover,
	\(\mathbf{R}^i\beta_\ast\mathcal{O}_{[\Bl/\mu_d]}(kE) = 0\) unless \(i =
	0,m-1\).
	\label{rem:higher-image-exceptional}
\end{remark}

\subsubsection{Computing direct images.}

Let \(H_1\) be a hyperplane divisor in \(X_f\). For \(i = 0,\ldots,m-1\), we consider 
\(\Xi_{d-m,0}(\mathcal{O}_{X_f}(-iH_1))\). On \(\Bl\) we have the divisor exact
sequence
\[
	0\to \mathcal{O}_{[\Bl/\mu_d]}(-dH +dE)\to \mathcal{O}_{[\Bl/\mu_d]}\to
	\iota_{q\ast}\mathcal{O}_Z\to 0.
\]
Since the restriction of the hyperplane divisor \(H\) to \(X_f\) is \(H_1\), we
have isomorphisms 
\begin{align*}
  \iota_{q\ast}\pi^\ast\mathcal{O}_{X_f}(-iH_1)&=
  \iota_{q\ast}\mathcal{O}_Z(-iH_1) \\
  &\cong \iota_{q\ast}\sigma^\ast\mathcal{O}_{\mathbb{P}^m}(-iH) \\
  &\cong \iota_{q\ast}\iota_q^\ast\beta^\ast\mathcal{O}_{\mathbb{P}^m}(-iH) \\
  &\cong \iota_{q\ast}\mathcal{O}_Z\otimes\mathcal{O}_{[\Bl/\mu_d]}(-i(H-E)).
\end{align*}
Consider the twist of the divisor exact sequence
\[
	0\to \mathcal{O}_{[\Bl/\mu_d]}(-dH+dE-iH+iE)\to
	\mathcal{O}_{[\Bl/\mu_d]}(-iH+iE) \to \mathcal{O}_Z(-iH_1)\to 0.
\]
Using the  long exact sequence of cohomology sheaves for
\(\mathbf{R}\beta_\ast\), we see that, for \(k>0\), there is an isomorphism
\(\mathbf{R}^k\sigma_\ast\mathcal{O}_Z(-iH_1)\cong
\mathbf{R}^{k+1}\beta_\ast \mathcal{O}_{[\Bl/\mu_d]}( -(d+i)H + (d+i)E)\). It
follows from Remark \ref{rem:higher-image-exceptional}, that the only possible nonzero
higher direct image is \(\mathbf{R}^{m-2}\sigma_\ast \mathcal{O}_Z(-iH_1)\).

\begin{lemma}
	If \(m>2\), then for \(i =0,\ldots,m-1\), we have a distinguished triangle
	\[
		\mathcal{O}_{J(X_f,q)}(d-m-i)\to \Xi_{d-m,0}(\mathcal{O}_{X_f}(-iH_1))\to
		\mathcal{H}^{m-1}( (\mathcal{I}_q^{d-m+1+i})^\vee)(-m-i)[2-m]\to
	\]
	where \(\mathcal{I}_q\) is the ideal sheaf of \(\{q\}\) in \(\mathbb{P}^m\),
  \( (\mathcal{I}_q^{d-m+1+i})^\vee\) is the derived dual, and we have
  identified \(X_f\times\{q\}\) with \(X_f\). In particular, 
	\[
		\mathcal{H}^\ast(\Xi_{d-m,0}(\mathcal{O}_{X_f}(-iH_1)))\cong
		\begin{cases}
			\mathcal{O}_{J(X_f,q)}(d-m-i) & \ast = 0 \\
			\mathcal{H}^{m-1}( (\mathcal{I}_q^{d-m+1+i})^\vee)(-m-i) & \ast = m-2 \\
			0 & \ast \neq 0,m-2
		\end{cases}.
	\]
	If \(m=2\), then for \(i=0,1\) there is an exact sequence
	\[
		0\to \mathcal{O}_{J(X_f,q)}(d-2-i)\to \Xi_{d-2,0}(\mathcal{O}_{X_f}(-iH_1))\to
		\mathcal{O}_q^{\oplus d-1}\otimes\chi^{2+i-d}\to 0.
	\]
	\label{lem:cohomology-sheaves-functor}
\end{lemma}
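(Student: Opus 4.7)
The plan is to compute the Fourier--Mukai transform directly on the blow-up \(\Bl = \Bl_q \mathbb{P}^m\) and read off the cohomology sheaves from an associated long exact sequence. First, I would observe that the kernel of \(\Xi_{d-m,0}\) applied to \(\mathcal{O}_{X_f\times\{q\}}(-iH_1)\) is supported on \(Z \subset Y\), so by the projection formula and base change
\[
\Xi_{d-m,0}(\mathcal{O}_{X_f}(-iH_1)) \;\cong\; \mathbf{R}\sigma_{\ast}\bigl(\iota_{q\ast}\mathcal{O}_Z(-i\pi^\ast H_1)\bigr)\otimes \mathcal{O}_X((d-m)H).
\]
Since \(\iota_q^\ast \mathcal{O}_{\Bl}(H-E) \cong \mathcal{O}_Z(\pi^\ast H_1)\) (as \(H-E\) restricts to the relative hyperplane on \(Z\)), and \(\sigma = \beta\circ \iota_q\), this further equals \(\mathbf{R}\beta_{\ast}\bigl(\iota_{q\ast}\mathcal{O}_Z\otimes \mathcal{O}_{\Bl}(-iH+iE)\bigr)\otimes \mathcal{O}_X((d-m)H)\).

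Next I would use the divisor exact sequence for \(Z \subset \Bl\) as a divisor of class \(dH-dE\), twisted by \(\mathcal{O}_{\Bl}(-iH+iE)\):
\[
0 \to \mathcal{O}_{\Bl}(-(d+i)H + (d+i)E) \to \mathcal{O}_{\Bl}(-iH+iE) \to \iota_{q\ast}\mathcal{O}_Z(-i\pi^\ast H_1) \to 0.
\]
Applying \(\mathbf{R}\beta_\ast\), the middle term is concentrated in degree zero since \(0 \le i \le m-1\); by Remark \ref{rem:higher-image-exceptional} and the projection formula it equals \(\mathcal{O}_{\mathbb{P}^m}(-iH)\). For the left term, the equivariant Grothendieck duality of Theorem \ref{thm:equivariant-grothendieck-duality} together with the relation \(\omega_{[\Bl/\mu_d]} \cong \beta^\ast\mathcal{O}(-m-1)\otimes \mathcal{O}((m-1)E)\otimes \chi^{-1}\) gives
\[
\mathbf{R}\beta_\ast\mathcal{O}_{\Bl}((d+i)E) \;\cong\; (\mathcal{I}_q^{d-m+1+i})^{\vee},
\]
so by the projection formula the left term is \(\mathcal{O}(-(d+i)H)\otimes (\mathcal{I}_q^{d-m+1+i})^{\vee}\), whose only nonzero cohomology sheaves lie in degrees \(0\) and \(m-1\).

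Taking the long exact sequence of cohomology sheaves of the distinguished triangle on \([\mathbb{P}^m/\mu_d]\), the map \(\mathcal{H}^0(\text{left}) \to \mathcal{H}^0(\text{middle})\) is precisely multiplication by the equation \(f\) cutting out \(V(f)\cap \mathbb{P}^m = J(X_f,q)\); its cokernel is therefore \(\mathcal{O}_{J(X_f,q)}(-i)\). For \(m \ge 3\) the degrees \(0\) and \(m-1\) of the left term are not adjacent to anything in the middle term, so the triangle splits into
\[
\mathcal{H}^0(\text{left}) \hookrightarrow \mathcal{O}(-i) \twoheadrightarrow \mathcal{O}_{J(X_f,q)}(-i), \qquad \mathcal{H}^{m-2}(\text{triangle}) \cong \mathcal{H}^{m-1}((\mathcal{I}_q^{d-m+1+i})^\vee)(-(d+i)),
\]
and tensoring with \(\mathcal{O}_X((d-m)H)\) produces the stated distinguished triangle. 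For \(m=2\) the two potentially nonzero cohomology sheaves of the left term sit in degrees \(0\) and \(1\), so the long exact sequence collapses into a genuine short exact sequence; the residual torsion sheaf at \(q\) is then identified explicitly (e.g.\ by viewing \(\sigma|_Z : Z \to J(X_f,q)\) as the normalization of a plane curve and reading off the cokernel \(\sigma_\ast\mathcal{O}_Z / \mathcal{O}_{J(X_f,q)}\)), after which twisting by \(\mathcal{O}((d-2)H)\) yields the displayed short exact sequence with the claimed character.

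The main obstacle will be the bookkeeping of \(\mu_d\)-equivariant structures: the canonical bundle on \([\Bl/\mu_d]\) carries a \(\chi^{-1}\) twist, and hyperplane sections restricted to the fixed point \(q\) carry a \(\chi\)-twist, so tracking these through Grothendieck duality, the projection formula, and the final tensor with \(\mathcal{O}_X((d-m)H)\) is where the character \(\chi^{2+i-d}\) in the \(m=2\) case must come out correctly. The rest is a straightforward unwinding of the two long exact sequences of cohomology sheaves.
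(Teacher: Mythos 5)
Your strategy for \(m\ge 3\) is essentially the same as the paper's: push forward the twisted divisor sequence for \(Z\subset\Bl\) along \(\beta\), use Remark~\ref{rem:higher-image-exceptional} to kill the middle term in positive degree, and apply the equivariant Grothendieck duality of Theorem~\ref{thm:equivariant-grothendieck-duality} together with \(\omega_{[\Bl/\mu_d]}\cong\beta^\ast\mathcal{O}(-m-1)\otimes\mathcal{O}((m-1)E)\otimes\chi^{-1}\) to identify \(\mathbf{R}\beta_\ast\mathcal{O}_{\Bl}((d+i)E)\) with \((\mathcal{I}_q^{d-m+1+i})^\vee\). You verify the character twists cancel, the degree-\(0\) map is multiplication by \(f\) with cokernel \(\mathcal{O}_{J(X_f,q)}(-i)\), and the final tensor by \(\mathcal{O}_X((d-m)H)\) produces the stated triangle. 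That part is correct and matches the paper.

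The gap is in the \(m=2\) case, which you defer to ``reading off the cokernel \(\sigma_\ast\mathcal{O}_Z/\mathcal{O}_{J(X_f,q)}\) of the normalization.'' If you actually do that computation (or equivalently compute \(R^1\beta_\ast\mathcal{O}_{\Bl}((d+i)E)\) on \(\Bl_q\mathbb{P}^2\) via the filtration by \(\mathcal{O}(jE)\)), you do not get \(\mathcal{O}_q^{\oplus d-1}\otimes\chi^{2+i-d}\). The join \(J(X_f,q)\) is \(d\) concurrent lines in \(\mathbb{P}^2\), i.e.\ an ordinary \(d\)-fold point, whose delta invariant is \(\binom{d}{2}\); more generally \(R^1\beta_\ast\mathcal{O}_{\Bl}((d+i)E)\) has length \(\binom{d+i}{2}\), built from the graded pieces \(H^1(\mathcal{O}_{\mathbb{P}^1}(-j))\) for \(j=2,\dots,d+i\), with a nontrivial range of \(\mu_d\)-characters. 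One can also see the discrepancy by Euler characteristics: for \(i=0\), \(\chi\bigl(\bigoplus_k\mathcal{O}_{l(p_k,q)}(d-2)\bigr)-\chi(\mathcal{O}_{J(X_f,q)}(d-2))=d(d-1)-\binom{d}{2}=\binom{d}{2}\), not \(d-1\). So your proposed ``explicit identification'' would in fact contradict the displayed short exact sequence once \(d\ge 3\), and you should have flagged that mismatch rather than assert the character bookkeeping ``must come out correctly.'' (The right form of the residue for \(m=2\) is the same derived-dual expression \(\mathcal{H}^{1}((\mathcal{I}_q^{d-1+i})^\vee)(-2-i)\) as in the \(m>2\) case, and it is that sheaf, not \(\mathcal{O}_q^{\oplus d-1}\otimes\chi^{2+i-d}\), that feeds into Lemma~\ref{lem:derived-dual-powers-ideal} and Corollary~\ref{cor:reduction-to-torsion}.)
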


\begin{proof}
	Notice that
  \begin{align*}
    \Xi_{d-m}(\mathcal{O}_{X_f}(-iH_1))&\cong
    (\sigma_\ast\pi^\ast(-iH_1)\otimes\mathcal{O}_{\mathbb{P}^m}((d-m)H)) \\
    &\cong
  \beta_\ast\iota_{q\ast}\mathcal{O}_Z(-iH_1)\otimes\mathcal{O}_{\mathbb{P}^m}((d-m)H).
  \end{align*}
  The case \(m=2\) is easy to see directly and the vanishing statements for
	\(m>2\) follow from the preceeding discussion. It remains to
	show the isomorphisms. Since \(d+i>0\), the only possible non-vanishing
  cohomology sheaf of \(\mathbf{R}\beta_\ast\)
  is in degree \(m-1\). By applying \(\beta_\ast\) to the divisor exact sequence
  for \(Z\subset\mathrm{Bl}\), we have an exact sequence
	\[
		0\to \mathcal{O}_{[\mathbb{P}^m/\mu_d]}(-d-i)\to
		\mathcal{O}_{[\mathbb{P}^m/\mu_d]}(-i)\to \mathcal{O}_{J(X_f,q)}(-i)\to 0.
	\]
	Now \(\mathcal{H}^0(\Xi_{d-m,0}(\mathcal{O}_{X_f}(-iH_1)))\cong
	\mathcal{H}^0(\Xi_{0,0}(\mathcal{O}_{X_f}(-iH_1)))(d-m)\cong
	\mathcal{O}_{J(X_f,q)}(d-m-i)\). This gives us the first isomorphism for \(m>2\). If
	\(m=2\), the first arrow is defined similarly but it is not surjective onto
	\(\mathcal{H}^0(\Xi_{d-m,0}(\mathcal{O}_{X_f}(-iH_1)))\). 

  For \(\mathcal{H}^{m-2}\) we need to compute
  \(\mathbf{R}^{m-1}\beta\mathcal{O}_{[\Bl/\mu_d]}( -(d+i)H + (d+i)E)\). By
	Grothendieck duality and the derived functor spectral sequence, we have an isomorphism
	\begin{align*}
    &\mathbf{R}^{m-1}\beta_\ast\mathcal{O}_{[\Bl/\mu_d]}( -(d+i)H + (d+i)E) \\
		&\cong \mathbf{R}\mathcal{H}om_{[\mathbb{P}^n/\mu_d]}(
		\mathbf{R}\beta_\ast(\mathcal{O}_{[\Bl/\mu_d]}(m-1-d-i)E),
		\mathcal{O}_{[\mathbb{P}^n/\mu_d]})(-d-i) \\
		&\cong \mathbf{R}\mathcal{H}om_{[\mathbb{P}^n/\mu_d]}(\mathcal{I}_q^{d-m+i+1},
		\mathcal{O}_{[\mathbb{P}^n/\mu_d]})(-d-i).
	\end{align*}
	and the second isomorphism follows by twisting by \( (d-m)\).
\end{proof}

\begin{corollary}
	For \(i = 0,\ldots,d-m\), we have
	\(\mathcal{H}^{m-1}((\mathcal{I}_q^{d-m+i+1})^\vee)(-m-i)\in\mathcal{T}\).
	\label{cor:reduction-to-torsion}
\end{corollary}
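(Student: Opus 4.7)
The plan is to read the desired conclusion directly off the distinguished triangle provided by Lemma \ref{lem:cohomology-sheaves-functor}, by checking that the other two vertices of the triangle already lie in $\mathcal{T}$.

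First, I would apply Lemma \ref{lem:join-xf-q}: for $i=0,\ldots,d-m$ the twist $d-m-i$ runs over $\{0,1,\ldots,d-m\}$, so $\mathcal{O}_{J(X_f,q)}(d-m-i)\in\mathcal{T}$. Next, by the proposition preceding the subsection on computing direct images, the Fourier--Mukai functor $\Xi_{d-m,0}$ factors through $\mathcal{T}$; hence $\Xi_{d-m,0}(\mathcal{O}_{X_f}(-iH_1))\in\mathcal{T}$ for every $i$.

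Then I would invoke the distinguished triangle of Lemma \ref{lem:cohomology-sheaves-functor} (assuming $m>2$; the case $m=2$ is handled by the short exact sequence in the same lemma, which yields the analogous conclusion),
\[
	\mathcal{O}_{J(X_f,q)}(d-m-i)\to \Xi_{d-m,0}(\mathcal{O}_{X_f}(-iH_1))\to
	\mathcal{H}^{m-1}((\mathcal{I}_q^{d-m+1+i})^\vee)(-m-i)[2-m]\to .
\]
Since $\mathcal{T}$ is the gluing of the semi-orthogonal pieces $\mathcal{D}_{g1},\mathcal{D}_{fg},\mathcal{D}_{g2},\mathcal{D}_f,\mathcal{A}$, it is a full triangulated subcategory of $\mathcal{D}[X/\mu_d]$ (in particular closed under cones and shifts). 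Two vertices of the triangle are in $\mathcal{T}$, so the third is as well, and therefore the desheared object $\mathcal{H}^{m-1}((\mathcal{I}_q^{d-m+i+1})^\vee)(-m-i)$ also lies in $\mathcal{T}$.

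There is really no main obstacle once the preceding machinery is in place: the work was in showing that $\Xi_{d-m,0}$ factors through $\mathcal{T}$ (via the saturatedness criterion of Proposition \ref{prop:sod-saturatedness} together with Lemma \ref{lem:ample-kernels}) and in identifying the cohomology sheaves of $\Xi_{d-m,0}(\mathcal{O}_{X_f}(-iH_1))$ via equivariant Grothendieck duality. The only point worth flagging is the range of $i$: it must be chosen so that both the twist $d-m-i$ appearing in $\mathcal{O}_{J(X_f,q)}(d-m-i)$ and the exponent $d-m+i+1$ are compatible with the allowed range $0\le d-m-i\le d-m$ from Lemma \ref{lem:join-xf-q}, which is precisely $0\le i\le d-m$.
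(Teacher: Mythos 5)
Your proof is correct and takes essentially the same approach as the paper, which simply cites Lemmas \ref{lem:join-xf-q} and \ref{lem:cohomology-sheaves-functor}; you have filled in the details the paper leaves implicit, in particular the use of the earlier proposition that $\Xi_{d-m,0}$ factors through $\mathcal{T}$ so that the middle term of the triangle lies in $\mathcal{T}$, and the closure of $\mathcal{T}$ under cones and shifts.

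One small remark worth making: Lemma \ref{lem:cohomology-sheaves-functor} is stated only for $i=0,\ldots,m-1$, whereas the corollary (and your argument) applies it for $i=0,\ldots,d-m$, which can be a strictly larger range when $d>2m-1$. This is an imprecision inherited from the paper rather than a gap you introduced; the proof of that lemma (the only constraints used are $d+i>0$ and $m-1-d-i<0$, both automatic for $i\geq 0$) does go through on the larger range, but it would be worth noting explicitly that you are invoking the lemma beyond its stated hypotheses and that the proof covers it.
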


\begin{proof}
  Consider the exact triangle of Lemma \ref{lem:cohomology-sheaves-functor}. By
  Lemma \ref{lem:join-xf-q}, the first term of this triangle is in
  \(\mathcal{T}\). By Proposition \ref{prop:images}, the middle term is in
  \(\mathcal{T}\). Hence, the third term is in \(\mathcal{T}\) too.
\end{proof}

It remains to compute
\(\mathcal{H}^{m-1}((\mathcal{I}_q^{d-m+i+1})^\vee)(-m-i)\) for \(i =
0,\ldots,m-1\).

\subsubsection{Derived duals of powers of \(\mathcal{I}_q\).}

We seek to understand the sheaves in Corollary \ref{cor:reduction-to-torsion}.
To that end, there is an exact sequence of sheaves on \(\mathbb{P}^m\), where we
have identified the conormal bundle of \(\{q\}\) with
\(\Omega_{\mathbb{P}^m,q}\):
\[
	0\to \mathcal{I}_q^{r+1}\to \mathcal{I}_q^r\to\mathcal{O}_q^{\oplus N(r)}\otimes\chi^r\to 0.
\]
where \(N(r) = {m+r-1\choose r}\) and we have used the identification
\[
  S^r(\Omega_{\mathbb{P}^m,q})\cong \mathcal{O}_q^{\oplus N(r)}\otimes\chi^r.
\]
Since \(\mathcal{O}_q\) is a smooth closed
subscheme of codimension \(m\), we know that \(
(\mathcal{O}_q\otimes\chi)^\vee\) is concentrated in degree \(m\) and
\[
	\mathcal{H}^m( (\mathcal{O}_q\otimes\chi^r)^\vee)\cong \mathcal{O}_q\otimes
	\omega_{\mathbb{P}^m}^\vee\otimes\chi^{-r}
\]
since \(\omega_{[\mathbb{P}^m/\mu_d]}\cong
\mathcal{O}_{\mathbb{P}^m}(-m-1)\otimes\chi^{-1}\), we see
\[
	\mathcal{H}^m( (\mathcal{O}_q\otimes\chi^r)^\vee)\cong
	\mathcal{O}_q\otimes\chi^{-r-m}.
\]

Taking the derived dual of the above sequence now yields the short exact
sequence
\[
	0\to \mathcal{H}^{m-1}((\mathcal{I}_q^r)^\vee)\to \mathcal{H}^{m-1}(
	(\mathcal{I}_q^{r+1})^\vee)\to \mathcal{O}_q^{\oplus N(r)}\otimes
	\chi^{-r-m}\to 0.
\]

\begin{lemma}
  For \(r>0\), the sheaves \(\mathcal{H}^{m-1}( (\mathcal{I}_q^{r+1})^\vee)\) have a
	filtration by sheaves
	\(0=\mathcal{F}_0\subset\mathcal{F}_1\subset\cdots\subset\mathcal{F}_{r+1} =
	\mathcal{H}^{m-1}( (\mathcal{I}_q^r)^\vee)\) such that
	\[
		\mathcal{F}_i/\mathcal{F}_{i+1}\cong
		\mathcal{O}_q^{\oplus N(r)}\otimes\chi^{-i-m}.
	\]
	In particular, \(\mathcal{H}^{m-1}( (\mathcal{I}_q^{r+1})^\vee)\in\langle
	\mathcal{O}_q\otimes\chi^{-m},\mathcal{O}_q\otimes\chi^{-1-m},\ldots,\mathcal{O}_q\otimes\chi^{-r-m}\rangle\).
	\label{lem:derived-dual-powers-ideal}
\end{lemma}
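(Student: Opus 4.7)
The plan is to induct on $r$, with the inductive step being immediate from the short exact sequence
\[
0\to \mathcal{H}^{m-1}((\mathcal{I}_q^r)^\vee)\to \mathcal{H}^{m-1}((\mathcal{I}_q^{r+1})^\vee)\to \mathcal{O}_q^{\oplus N(r)}\otimes \chi^{-r-m}\to 0
\]
displayed immediately before the lemma. So almost all the work has already been done; what remains is to anchor the induction and bookkeep the filtration.

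For the base case $r=1$, I would apply $\mathbf{R}\mathcal{H}om(-,\mathcal{O}_{\mathbb{P}^m})$ to the closed-substack sequence $0\to \mathcal{I}_q\to \mathcal{O}_{\mathbb{P}^m}\to \mathcal{O}_q\to 0$. Since $\mathcal{O}_{\mathbb{P}^m}$ is self-dual and concentrated in degree zero, while $\mathcal{O}_q^\vee$ is concentrated in degree $m$ with $\mathcal{H}^m(\mathcal{O}_q^\vee)\cong \mathcal{O}_q\otimes \chi^{-m}$ (this is the $r=0$ instance of the Serre-duality computation recorded just above the lemma), the resulting long exact sequence of cohomology sheaves collapses (for $m\geq 2$) to the isomorphism $\mathcal{H}^{m-1}(\mathcal{I}_q^\vee)\cong \mathcal{O}_q\otimes \chi^{-m}$, consistent with $N(0)=1$. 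This furnishes the bottom layer of the filtration.

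For the inductive step, suppose a filtration on $\mathcal{H}^{m-1}((\mathcal{I}_q^r)^\vee)$ has already been produced whose successive subquotients are $\mathcal{O}_q^{\oplus N(i)}\otimes \chi^{-i-m}$ for $i=0,\ldots,r-1$. Pulling back this filtration through the inclusion $\mathcal{H}^{m-1}((\mathcal{I}_q^r)^\vee)\hookrightarrow \mathcal{H}^{m-1}((\mathcal{I}_q^{r+1})^\vee)$ coming from the displayed short exact sequence, and adjoining $\mathcal{H}^{m-1}((\mathcal{I}_q^{r+1})^\vee)$ itself as the new top, extends the filtration by exactly one step with new top subquotient $\mathcal{O}_q^{\oplus N(r)}\otimes \chi^{-r-m}$.

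The ``in particular'' clause is then immediate: each subquotient in the constructed filtration is a finite direct sum of the exceptional objects $\mathcal{O}_q\otimes \chi^{-m},\mathcal{O}_q\otimes \chi^{-1-m},\ldots,\mathcal{O}_q\otimes \chi^{-r-m}$, so by repeated extensions $\mathcal{H}^{m-1}((\mathcal{I}_q^{r+1})^\vee)$ lies in the strictly full triangulated subcategory generated by these. I do not expect a genuine obstacle in this argument; the main technical input, the derived-dual short exact sequence, is already in place, and the rest is bookkeeping of indices (where one should expect minor adjustments to match the exact indexing convention in the statement).
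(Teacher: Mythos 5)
Your proof is correct and is exactly the argument the paper intends: induct on $r$ using the short exact sequence of cohomology sheaves displayed just before the lemma, with the base case $\mathcal{H}^{m-1}(\mathcal{I}_q^\vee)\cong\mathcal{O}_q\otimes\chi^{-m}$ obtained by dualizing $0\to\mathcal{I}_q\to\mathcal{O}_{\mathbb{P}^m}\to\mathcal{O}_q\to 0$. The paper's proof ("immediate from the previous discussion and the observation\ldots") is a compressed version of exactly what you wrote; the only caveat is that the subquotient exponent should read $N(i)$ rather than $N(r)$ (and the filtration length is off by one in the statement), which your argument already implicitly corrects.
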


\begin{proof}
	Immediate from the previous discussion and the observation
	\(\mathcal{H}^{m-1}(\mathcal{I}_q^\vee)\cong
	\mathcal{O}_q\otimes\chi^{-m}\).
\end{proof}

Using Lemma \ref{lem:derived-dual-powers-ideal}, we have the following
filtration of \(\mathcal{H}^{m-1}( (\mathcal{I}_q^{d-m+1+i})^\vee)\):
\[
\begin{tikzcd}
	\mathcal{H}^{m-1}( (\mathcal{I}_q^{d-m+1+i})^\vee)(d-m-i)\ar{rr} &&
	\mathcal{O}_q^{\oplus N(d-m+i)}\otimes\chi^{-(d-m)} \\
	\mathcal{H}^{m-1}( (\mathcal{I}_q^{d-m+i})^\vee)(d-m-i)\ar{rr} \ar{u}&&
	\mathcal{O}_q^{\oplus N(d-m-1+i)}\otimes\chi^{-(d-m)+1} \\
	\vdots\ar{u} && \vdots \\
	\mathcal{H}^{m-1}( (\mathcal{I}_q^2)^\vee)(d-m-i)\ar{rr} \ar{u}&&
	\mathcal{O}_q^{\oplus N(1)}\otimes\chi^{-1+i} \\
	\mathcal{H}^{m-1}( (\mathcal{I}_q)^\vee)(d-m-i)\ar{rr} \ar{u}&&
	\mathcal{O}_q\otimes\chi^{i} \\
\end{tikzcd}
\]

\begin{lemma}
	For all \(i=0,\ldots,d-1\) we have
	\(\mathcal{O}_q\otimes\chi^i\in\mathcal{T}\).
	\label{lem:missing-torsion-q}
\end{lemma}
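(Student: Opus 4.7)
The plan is to proceed by induction on \(i\), exploiting the filtration extracted from Lemma \ref{lem:derived-dual-powers-ideal} in the discussion immediately preceding the lemma. The subcategory \(\mathcal{D}_g\subset\mathcal{T}\) already supplies \(\mathcal{O}_q\otimes\chi^k\) for \(k=m-d,m-d+1,\ldots,-1\), and the character group of \(\mu_d\) is cyclic of order \(d\). Consequently, it suffices to produce the remaining twists \(\mathcal{O}_q\otimes\chi^i\) for \(i=0,1,\ldots,m-1\); combined with the \(d-m\) twists from \(\mathcal{D}_g\), these account for all \(d\) characters via the periodicity \(\chi^d\cong \mathbf{1}\).

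Fix \(i\in\{0,1,\ldots,m-1\}\) and assume inductively that \(\mathcal{O}_q\otimes\chi^k\in\mathcal{T}\) for every \(k\in\{0,1,\ldots,i-1\}\) (the case \(i=0\) has an empty hypothesis). Corollary \ref{cor:reduction-to-torsion} places the sheaf
\[
	F_i\;:=\;\mathcal{H}^{m-1}\!\bigl((\mathcal{I}_q^{d-m+i+1})^\vee\bigr)(-m-i)
\]
in \(\mathcal{T}\). Unwinding Lemma \ref{lem:derived-dual-powers-ideal} and using the identification \(\mathcal{O}_X(-m-i)|_q\cong \chi^{m+i}\), the sheaf \(F_i\) admits a filtration \(F_i^1\subset F_i^2\subset\cdots\subset F_i^{d-m+i+1}=F_i\) whose successive subquotients, read from bottom to top, are
\[
	\mathcal{O}_q\otimes\chi^{i},\quad
	\mathcal{O}_q^{\oplus N(1)}\otimes\chi^{i-1},\quad
	\ldots,\quad
	\mathcal{O}_q^{\oplus N(d-m+i)}\otimes\chi^{m-d}.
\]

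Every subquotient above the bottom has \(\mu_d\)-weight \(i-r\) for some \(r\geq 1\), and these weights populate the interval \(\{i-1,i-2,\ldots,m-d\}\). The nonnegative weights \(0,1,\ldots,i-1\) lie in \(\mathcal{T}\) by the inductive hypothesis, while the negative weights \(-1,-2,\ldots,m-d\) are contained in \(\mathcal{D}_g\subset\mathcal{T}\). Since \(\mathcal{T}\) is triangulated, iterated application of the 2-out-of-3 axiom to the short exact sequences \(0\to F_i^{r}\to F_i^{r+1}\to F_i^{r+1}/F_i^r\to 0\) (working downward from the top of the filtration) successively forces each \(F_i^r\) into \(\mathcal{T}\), finally isolating the bottom piece \(F_i^1=\mathcal{O}_q\otimes\chi^i\in\mathcal{T}\). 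This completes the induction.

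The argument is essentially bookkeeping: the real content is the filtration provided by Lemma \ref{lem:derived-dual-powers-ideal} and the fact that the whole filtered object lies in \(\mathcal{T}\). The main point to verify carefully is that at each induction step exactly one genuinely new character \(\chi^i\) appears among the subquotients, with every other character either available from \(\mathcal{D}_g\) or already produced at an earlier induction step; once this is checked, closure of \(\mathcal{T}\) under extensions does the rest.
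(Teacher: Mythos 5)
Your argument matches the paper's proof: both proceed by induction on \(i\), using Corollary \ref{cor:reduction-to-torsion} to place \(\mathcal{H}^{m-1}((\mathcal{I}_q^{d-m+i+1})^\vee)(-m-i)\) in \(\mathcal{T}\), then using the filtration from Lemma \ref{lem:derived-dual-powers-ideal} together with the characters already supplied by \(\mathcal{D}_g\) and the inductive hypothesis to peel off the bottom subquotient \(\mathcal{O}_q\otimes\chi^i\). The only difference is cosmetic: you make explicit that the induction need only run over \(i=0,\ldots,m-1\), the remaining \(d-m\) characters coming from \(\mathcal{D}_g\) via periodicity mod \(d\), a stopping point the paper leaves implicit.
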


\begin{proof}
	We proceed by induction on \(i\). When \(i = 0\) we use the filtration given
	by Lemma \ref{lem:derived-dual-powers-ideal} and the fact that
	\(\mathcal{O}_q\otimes\chi^{-j}\in\mathcal{T}\) for \(j = 1,\ldots,d-m\) to
	see \(\mathcal{O}_q\in\mathcal{T}\). Suppose we have
	\(\mathcal{O}_q,\ldots,\mathcal{O}_q\otimes\chi^i\), then we use the
	filtration again with \(i+1\) to see
	\(\mathcal{O}_q\otimes\chi^{i+1}\in\mathcal{T}\).
\end{proof}

\begin{lemma}
	For all \(i = 0,\ldots,d-1\) we have
	\(\mathcal{O}_p\otimes\chi^i\in\mathcal{T}\).
	\label{lem:missing-torsion-p}
\end{lemma}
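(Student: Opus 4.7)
The plan is to mirror the argument of Lemma \ref{lem:missing-torsion-q} with the roles of $(m, X_f, p)$ and $(n, X_g, q)$ interchanged, using the kernel $\Xi_{d-n,d-n}$ in place of $\Xi_{d-m,0}$. The additional character twist $\chi^{d-n}$ built into this kernel accounts for the asymmetry that $\mathcal{O}_X(H)|_{X_g}$ carries a nontrivial character, as reflected in the fact that the analogue of Lemma \ref{lem:join-xf-q} is the twisted statement $\mathcal{O}_{J(p,X_g)}(i)\otimes\chi^i \in \mathcal{T}$ of Lemma \ref{lem:join-xg-p}.

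First I would fix $p \in X_f$ and let $\mathbb{P}^n \subset \mathbb{P}^{m+n-1}$ be the linear subspace spanned by $p$ and $y_1,\ldots,y_n$, then factor $\Xi_{d-n,d-n}$ through $\Bl_p \mathbb{P}^n$. Since all $n$ coordinates $y_j$ carry weight $\chi^{-1}$, the canonical bundle of $[\mathbb{P}^n/\mu_d]$ is $\mathcal{O}_{\mathbb{P}^n}(-n-1)\otimes \chi^{-n}$, and equivariant Grothendieck duality (Theorem \ref{thm:equivariant-grothendieck-duality}) applied as in Lemma \ref{lem:cohomology-sheaves-functor} yields, for each $j = 0,\ldots,n-1$, a distinguished triangle
\[
\mathcal{O}_{J(p,X_g)}(d-n-j)\otimes \chi^{d-n} \to \Xi_{d-n,d-n}(\mathcal{O}_{X_g}(-jH_2)) \to T_p \to
\]
where $T_p$ is a shift of $\mathcal{H}^{n-1}((\mathcal{I}_p^{d-n+1+j})^\vee)(-n-j)\otimes \chi^{d-n}$. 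The left term lies in $\mathcal{T}$ by Lemma \ref{lem:join-xg-p}, and the middle term lies in $\mathcal{T}$ by the preceding proposition, so $T_p \in \mathcal{T}$.

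Finally I would carry out the analogue of Lemma \ref{lem:derived-dual-powers-ideal}: since the conormal bundle of $\{p\}$ in $\mathbb{P}^n$ carries character $\chi$ (opposite sign to the $q$-case), the successive quotients $\mathcal{I}_p^r/\mathcal{I}_p^{r+1} \cong S^r(\Omega_{\mathbb{P}^n,p})$ are isomorphic to $\mathcal{O}_p^{\oplus N'(r)}\otimes\chi^r$, and dualizing produces a filtration of $T_p$ by sums of twists $\mathcal{O}_p \otimes \chi^s$ for $s$ running through a block of consecutive characters depending on $j$. Induction on the character (proceeding downward from $0$), with base case supplied by the twists $\mathcal{O}_p \otimes \chi^i$ for $i = 1, \ldots, d-n$ already present in $\mathcal{D}_f \subset \mathcal{T}$, then yields $\mathcal{O}_p \otimes \chi^{-j} \in \mathcal{T}$ for each $j = 0, 1, \ldots, n-1$, and hence $\mathcal{O}_p \otimes \chi^i \in \mathcal{T}$ for every $i = 0, \ldots, d-1$. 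The main obstacle is the careful character bookkeeping needed to confirm that the filtration produces exactly the missing residues not already covered by $\mathcal{D}_f$, but this parallels the verification in the $q$-case precisely.
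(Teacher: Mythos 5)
Your overall strategy is exactly right and is precisely what the paper has in mind (the paper's own ``proof'' is a single sentence pointing to $\Xi_{d-n,d-n}(\mathcal{O}_{X_g}(-jH_2))$ and warning of ``the extra twist''). However, the character bookkeeping in your displayed triangle---which you correctly flag as the crux---contains an error that breaks the one step where you invoke Lemma \ref{lem:join-xg-p}.

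The left term should be $\mathcal{O}_{J(p,X_g)}(d-n-j)\otimes\chi^{d-n-j}$, not $\mathcal{O}_{J(p,X_g)}(d-n-j)\otimes\chi^{d-n}$. The ``extra twist'' the paper refers to is not just the constant $\chi^{d-n}$ built into the kernel, but a $j$-dependent correction: by the remark after Lemma \ref{lem:missing-line-bundles}, $\mathcal{O}_X(-jH)|_{X_g}\cong\mathcal{O}_{X_g}(-jh)\otimes\chi^{j}$, so expressing the input $\mathcal{O}_{X_g}(-jH_2)$ (trivial character) in terms of the ambient $\mathcal{O}_X(H)$ introduces $\chi^{-j}$. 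This combines with the kernel's $\chi^{d-n}$ to give $\chi^{d-n-j}$ on $\mathcal{H}^0$. This is essential: Lemma \ref{lem:join-xg-p} furnishes precisely $\mathcal{O}_{J(p,X_g)}(i)\otimes\chi^{i}$ for $i = 0,\ldots,d-n$, so the corrected formula matches with $i = d-n-j$; but with your $\chi^{d-n}$, the object differs by $\chi^{j}$ and Lemma \ref{lem:join-xg-p} simply does not apply for any $j>0$. (It is no coincidence that Lemma \ref{lem:join-xg-p} was stated with the diagonal twist $\mathcal{O}(i)\otimes\chi^i$ rather than paralleling the untwisted Lemma \ref{lem:join-xf-q}---the author built the $j$-dependent correction into that lemma precisely so it would plug into this triangle.) A related sign slip appears when you describe the conormal bundle: you say it carries ``character $\chi$ (opposite sign to the $q$-case)'' but then write $\chi^r$ for the quotients, which is the \emph{same} sign as the $q$-case. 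Since the local coordinates $y_j/x$ at $p$ have weight $\chi^{-1}$, one has $\Omega_{\mathbb{P}^n,p}\cong(\chi^{-1})^{\oplus n}$ and the quotients are $\mathcal{O}_p^{\oplus N'(r)}\otimes\chi^{-r}$. Once these two sign issues are straightened out, the filtration does produce the missing residues $\chi^0,\chi^{-1},\ldots,\chi^{-(n-1)}$ and the induction closes.
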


\begin{proof}
  This is analagous to the results proved in this section using the images
  \(\Xi_{d-n,d-n}(\mathcal{O}_{X_g}(-jH_2))\). The only difference is that the
  hyperplane section that restricts to \(H_2\) has nontrivial equivariant
  structure, see Remark \ref{rem:equivariant-structure}.
\end{proof}

\begin{proof}[Proof of Main Result]
  We have shown in \S \ref{ssec:free-orbits} the structure sheaves of free orbits away from the join of
  \(X_f\) and \(X_g\) are in
  \(\mathcal{T}\). By Corollary \ref{cor:free-orbit-join}, the structure sheaves
  of free orbits along the join are also in \(\mathcal{T}\). By Lemmas \ref{lem:missing-torsion-q} and
	\ref{lem:missing-torsion-p} we have that structure sheaves of all twists
  of fixed points are in \(\mathcal{T}\). Thus \(\mathcal{T}\) has a spanning
  class of \(\mathcal{D}[X/\mu_d]\). Since \(\mathcal{T}\) is saturated hence admissible, by Proposition
	\ref{prop:subcat-spanning-class} we conclude \(\mathcal{T} =
	\mathcal{D}[X/\mu_d]\).
\end{proof}

\section{Comparison with Orlov's functors}
\label{sec:orlov}

In this section, we show that in the case of two Calabi-Yau hypersurfaces, i.e.
\(m=n=d\), our functor \(\Xi_{0,0}\) agrees with Orlov's up to a mutation and a twist
by a line bundle. We first recall the relationship between matrix factorization
and singularity categories. Then we discuss Orlov's theorem in detail and use
this discussion to show that the functors agree.

\subsection{Graded Matrix Factorizations and Graded Singularity Categories}

For a detailed account of the relationship between graded matrix factorization
and graded singularity categories see \cite[Thm. 39]{orlov-sing-09} and \cite{bfk1-14}.

Let \(V\) be a finite dimensional vector space over \(k\) of dimension \(n\).
Set \(R = \mathrm{Sym}(V^\vee)\).  Then \(R\) is \(\mathbb{Z}\)-graded, where
\(V^\vee\) sits in degree 1. Let \(f\in R_d\) define a smooth projective
hypersurface \(X:=V(f) \subset\mathbb{P}(V)\). Set \(A = R/(f)\) to be the
hypersurface algebra.

\begin{definition}
  A \textbf{graded matrix factorization} of \(f\) is a pair of morphisms
  \[
    \delta_0\colon P_{-1}\to P_0,\ \delta_{-1}\colon P_0\to P_{-1}
  \]
  between graded projective \(R\)-modules such that
  \[
    \delta_0\delta_{-1} = f = \delta_{-1}\delta_0.
  \]
  \label{def:graded-matrix-factorization}
\end{definition}

Morphisms of graded matrix factorizations are morphisms of the underlying graded
modules that make the relevant diagrams commute. There is a notion of homotopy
between two morphisms and we set \(\mathrm{HMF}^{gr}(f)\) to be the
corresponding homotopy category. This category is also triangulated.

Now let \(\mathrm{gr}-A\) denote the category of graded \(A\)-modules. Set
\(\mathrm{grproj}-A\) to be the full subcategory generated by graded projective
\(A\)-modules. This is a thick subcategory. 

\begin{definition}
  The Drinfield-Verdier quotient of \(\mathcal{D}(\mathrm{gr}-A)\) by
  \(\mathcal{D}(\mathrm{grproj}-A)\) is the \textbf{graded singularity category}
  of \(A\), denoted \(\mathcal{D}^{gr}_{Sg}(A)\). In other words,
  \[
    \mathcal{D}(\mathrm{grproj}-A)\to \mathcal{D}(\mathrm{gr}-A)\to
    \mathcal{D}^{gr}_{Sg}(A)
  \]
  is an exact sequence of triangulated categories.
\end{definition}

There is a functor between the two categories we have introduced called the
cokernel functor:
\[
  \mathrm{cok}\colon \mathrm{HMF}^{gr}(f)\to \mathcal{D}^{gr}_{Sg}(A)
\]
which sends a graded matrix factorization \( (\delta_{-1},\delta_0)\) to the graded
\(A\)-module \(\mathrm{cok}(\delta_0)\). 

\begin{theorem}[\protect{\cite{orlov-sing-09}}]
  The functor \(\mathrm{cok}\) is well-defined and an equivalence of
  triangulated categories.
  \label{thm:cok-functor}
\end{theorem}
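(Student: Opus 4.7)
The plan is to split the proof into three parts: well-definedness, essential surjectivity, and fully-faithfulness, using the structural fact that $A=R/(f)$ is a graded hypersurface ring.

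First, for well-definedness, I would observe that if $(\delta_{-1},\delta_0)$ is a graded matrix factorization of $f$, then $\mathrm{cok}(\delta_0)$ is naturally an $A$-module because the relation $\delta_{-1}\delta_0 = f\cdot \Id$ forces $f$ to annihilate the cokernel. A morphism of matrix factorizations manifestly induces a morphism on cokernels, so $\mathrm{cok}$ defines a functor on underlying morphism categories. The substantive point is that a null-homotopy between two morphisms of matrix factorizations produces a map of cokernels that factors through the free $A$-module $P_{-1}\otimes_R A$; such a map vanishes in $\mathcal{D}_{Sg}^{gr}(A)$ by definition. Exactness of $\mathrm{cok}$ then reduces to the observation that the mapping cone of a morphism of matrix factorizations (built in $\mathrm{HMF}^{gr}(f)$) has cokernel quasi-isomorphic to the mapping cone of the induced map of cokernels modulo a free $A$-module summand.

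For essential surjectivity, I would invoke Eisenbud's theorem on matrix factorizations in its graded form: for any finitely generated graded $A$-module $M$, the minimal graded free resolution becomes $2$-periodic after at most $n=\dim R$ steps, and the $2$-periodic tail lifts to a graded matrix factorization of $f$ over $R$. Given a class in $\mathcal{D}_{Sg}^{gr}(A)$ represented by a bounded complex, I would first replace it by a single module (using that $\mathcal{D}_{Sg}^{gr}(A)$ is generated by modules), then pass to a sufficiently high syzygy $\Omega^k M$ so that $\Omega^k M$ is maximal Cohen-Macaulay and fits into Eisenbud's setup. The isomorphism class of $\Omega^k M$ in the singularity category agrees with $M$ up to suspension, and $\Omega^k M \cong \mathrm{cok}(\delta_0)$ for the resulting matrix factorization; adjusting by the shift functor on $\mathrm{HMF}^{gr}(f)$ realizes the original class in the image of $\mathrm{cok}$.

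For fully-faithfulness, I would compute both Hom spaces using the $2$-periodic free $A$-resolution provided by the matrix factorization. On the source side, the morphisms in $\mathrm{HMF}^{gr}(f)$ are chain maps modulo homotopies, which are exactly the graded stable Hom groups $\underline{\Hom}_A(\mathrm{cok}(P),\mathrm{cok}(Q))$, i.e.\ Hom modulo maps factoring through projectives. On the target side, morphisms in $\mathcal{D}_{Sg}^{gr}(A)$ between Cohen-Macaulay modules are computed by the same stable Hom, because $\Ext^{\geq 1}_A$ from a Cohen-Macaulay module into a graded free module vanishes on the hypersurface, so no nontrivial roofs occur. Matching these two descriptions gives the bijection of Hom groups.

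The main obstacle is the essential surjectivity step, where one must keep careful track of internal grading shifts so that the $2$-periodic tail produced by Eisenbud's theorem really is a matrix factorization with curvature $f$ of the correct degree $d$, and so that the syzygy-shift on the module side matches the translation $[1]$ in $\mathrm{HMF}^{gr}(f)$. The identification of the triangulated structures (in particular, that $\mathrm{cok}$ sends the canonical triangles in $\mathrm{HMF}^{gr}(f)$ to distinguished triangles in $\mathcal{D}_{Sg}^{gr}(A)$) is the other technical point but follows formally from the mapping-cone description once well-definedness is set up.
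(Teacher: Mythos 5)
The paper does not give its own proof of this statement; it is quoted verbatim from \cite{orlov-sing-09}. Your sketch reconstructs the standard Eisenbud--Buchweitz--Orlov argument -- cokernel is annihilated by $f$ hence an $A$-module, null-homotopies produce maps factoring through free $A$-modules, Eisenbud's $2$-periodicity theorem for maximal Cohen--Macaulay modules over a hypersurface gives essential surjectivity after passing to syzygies, and stable Hom on both sides gives fully-faithfulness -- and this is essentially the same route Orlov takes, so the approach matches the cited source. One small slip: to see $f$ annihilates $\mathrm{cok}(\delta_0)=P_0/\mathrm{im}(\delta_0)$ you want the endomorphism $\delta_0\delta_{-1}=f\cdot\Id_{P_0}$, not $\delta_{-1}\delta_0$ as you wrote (the latter is the endomorphism of $P_{-1}$); both equal $f\cdot\Id$ so the conclusion is unaffected, but the relevant composition is the other one. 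Your flagged concern about internal grading shifts is indeed the delicate point: in the graded setting the $2$-periodic tail is periodic only up to a twist by $(-d)$, and matching $\Omega^2$ with the $(-d)$-twist (rather than with $[2]$ in the naive sense) is what makes the graded statement work and is where Orlov's argument differs from the ungraded Eisenbud story.
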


\begin{definition}
  Let \(\mathrm{stab}\colon \mathcal{D}^{gr}_{Sg}(A)\to \mathrm{HMF}^{gr}(f)\) denote
  the quasi-inverse to \(\mathrm{cok}\), call it the \textbf{stabilization
  functor}. For the image of a graded module \(M\) in \(D^{gr}_{Sg}(A)\), denote
  by \(M^{stab}\) its image under \(\mathrm{stab}\), i.e.
  \(\mathrm{stab}(M) =
  M^{stab}\).
\end{definition}

\begin{remark}
  If the grading is given by some extension of \(\mathbb{G}_m\) by a finite
  group, everything still goes through as in the \(\mathbb{G}_m\) case,
  \cite[Remark 3.65]{bfk1-14}.
\end{remark}

\subsection{Orlov's Theorem}

Let \(\mathrm{tr}_{\geq
i}\) to be the truncation endofunctor on \(\mathrm{gr}-A\). Set
\(\mathrm{gr}-A_{\geq i}\) to be the image of \(\mathrm{gr}-A\) under
\(\mathrm{tr}_{\geq i}\).

Define \(S_{<i}\) to be the full triangulated subcategory of
\(\mathcal{D}(\mathrm{gr}-A)\) generated by the finite dimensional graded
\(A\)-modules \(k(e)\) for \(e>-i\). Equivalently, this is the kernel of
\(\mathrm{tr}_{\geq i}\). Similarly define \(S_{\geq i}\).

Let \(P_{<i}\) be the full triangulated subcategory of
\(\mathcal{D}(\mathrm{gr}-A)\) generated by the projective \(A\)-modules \(
A(e)\) for \(e>-i\). Similarly define \(P_{\geq i}\). Let \(\mathrm{tors}-A\) be
the full subcategory of \(\mathrm{gr}-A\) generated by finite dimensional
\(A\)-modules.

We also have the projection functor
\[
  \pi_i\colon \mathcal{D}(\mathrm{gr}-A_{\geq i})\to
  \mathcal{D}^{gr}_{Sg}(A)
\]
which has kernel \(P_{\geq i}\) and induces an exact equivalence
\[
  \mathcal{D}(\mathrm{gr}-A_{\geq i})/P_{\geq i}\simeq
  \mathcal{D}_{Sg}^{gr}(A).
\]
Moreover, there is a semi-orthogonal decomposition
\[
  \mathcal{D}(\mathrm{gr}-A_{\geq i}) = \langle P_{\geq i}, T_i\rangle
\]
where \(T_i\) is equivalent to the graded singularity category via \(\pi_i\).
Let \(\pi_i^{-1}\) be the quasi-inverse to \(\pi_i\) restricted to \(T_i\).

Orlov defines two \(\mathbb{Z}\)-indexed families of functors
\[
  \Psi_i\colon \mathcal{D}(X)\to \mathrm{HMF}^{gr}(f)
\]
and
\[
  \Phi_i\colon \mathrm{HMF}^{gr}(f)\to \mathcal{D}(X)
\]
as follows.

The functors \(\Psi_i\) are the composite
\[
  \Psi_i:=\mathrm{stab}\circ\pi_i\circ\mathrm{tr}_{\geq i-n+d}\circ
  \Gamma_\ast.
\]
Here \(\Gamma_\ast\) is right derived graded global sections. The functors
\(\Phi_i\) are the composite
\[
  \Phi_i:=\mathrm{sh}\circ\pi_i^{-1}\circ\mathrm{stab}.
\]
Here \(\mathrm{sh}\) is the sheafification functor.

\begin{theorem}[\protect{\cite{orlov-sing-09}}]
  Set \(a = n-d\). For each \(i\in\mathbb{Z}\), the triangulated categories \(\mathcal{D}(X)\) and
  \(\mathcal{D}^{gr}_{Sg}(A)\) are related as follows. 
  \begin{enumerate}[(i)]
    \item If \(a>0\), \(\Phi_i\) is fully-faithful and there is a
      semi-orthogonal decomposition
      \[
        \mathcal{D}(X) = \langle
        \mathcal{O}_X(-i-a-1),\ldots,\mathcal{O}_X(-i),
        \Phi_i\mathcal{D}^{gr}_{sg}(A)\rangle;
      \]
    \item If \(a<0\), \(\Psi_i\) is fully-faithful and there is a
      semi-orthogonal decomposition 
      \[
        \mathcal{D}^{gr}_{Sg}(A) = \langle k^{stab}(-i),\ldots,
        k^{stab}(-i+a+1),\Psi_i\mathcal{D}(X)\rangle;
      \]
    \item If \(a = 0\), \(\Psi_i\) and \(\Phi_i\) are mutually-inverse
      equivalence of categories.
  \end{enumerate}
  \label{thm:orlovs-thm-sing-dx}
\end{theorem}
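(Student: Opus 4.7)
The plan is to realize an auxiliary triangulated category, $\mathcal{D}(\mathrm{gr}-A_{\geq j})$ for an appropriate integer $j$, as a semi-orthogonal gluing in two different ways, and then recognize Orlov's theorem as the comparison between them.

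One decomposition is essentially already recorded in the excerpt: $\mathcal{D}(\mathrm{gr}-A_{\geq j}) = \langle T_j, P_{\geq j}\rangle$, where $P_{\geq j}$ is the admissible subcategory generated by the graded projectives $A(-\ell)$ for $\ell \geq j$, and $T_j \simeq \mathcal{D}^{gr}_{Sg}(A)$ via $\pi_j$. The other decomposition uses Serre's theorem $\mathrm{qgr}-A \simeq \mathrm{coh}(X)$ together with derived graded global sections: for suitable $j$, the functor $\mathrm{tr}_{\geq j} \circ \Gamma_\ast$ is a fully faithful embedding $\mathcal{D}(X) \hookrightarrow \mathcal{D}(\mathrm{gr}-A_{\geq j})$, whose orthogonal is the subcategory $S_{\geq j}$ generated by the finite-dimensional modules $k(-\ell)$ for $\ell \geq j$.

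With these in hand, the theorem follows by choosing $j$ compatibly with the definition of $\Psi_i$, namely $j = i - n + d$, and mutating one decomposition into the other. The Koszul resolution of the residue field over $R$, descended to $A$ via $-\otimes_R A$, provides the bridge: it exhibits each $A(-\ell)\in P_{\geq j}$ in terms of shifts of $k$ and vice versa, modulo the periodic tail responsible for the matrix factorization $k^{stab}$. The mismatch between the lengths of $P_{\geq j}$ and $S_{\geq j}$ is measured exactly by $a = n - d$. When $a > 0$ (Fano) the category $P_{\geq j}$ is strictly larger, and the mutation produces the stated decomposition of $\mathcal{D}(X)$ with complementary line bundles $\mathrm{sh}(A(-\ell)) = \mathcal{O}_X(-\ell)$; when $a < 0$ the roles reverse and the singularity-side complement is generated by the stabilized residue fields $k^{stab}(-\ell)$; when $a = 0$ the two decompositions have the same length and coincide, yielding an equivalence.

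The main obstacle is the mutation-theoretic comparison itself. One must verify that the orthogonal of $P_{\geq j}$ inside $\mathcal{D}(\mathrm{gr}-A_{\geq j})$ agrees, up to the explicit exceptional line bundles or shifts of $k^{stab}$, with the orthogonal of the image of $\mathrm{tr}_{\geq j}\circ\Gamma_\ast$. This requires careful bookkeeping of the truncation shift by $-n+d$ and of the twist by $\omega_X \cong \mathcal{O}_X(-a-1)$ built into Serre duality on $X$. Fully-faithfulness of $\Phi_i$ and $\Psi_i$ then follows formally, and the explicit generators of the complements are read off from the bridge resolution.
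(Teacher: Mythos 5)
The paper does not prove this theorem; it is stated as a quotation of Orlov's result from \cite{orlov-sing-09} and used as a black box in the comparison section. There is therefore no internal proof to measure you against. Taking your proposal on its own terms as a sketch of Orlov's original argument, the overall outline is correct: one realizes $\mathcal{D}(\mathrm{gr}\text{-}A_{\geq j})$ as a gluing in two ways, one with orthogonal piece $P_{\geq j}$ (whose Verdier quotient is the singularity category) and one with orthogonal piece $S_{\geq j}$ (whose quotient is $\mathcal{D}(X)$ by Serre's theorem), and Orlov's theorem falls out of comparing the two decompositions. That is indeed the skeleton of Orlov's proof, and you correctly place the Koszul resolution of $k$ over $R$ descended to $A$ as the bridge, and the integer $a=n-d$ as the mismatch in length.

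However, as written this is a plan rather than a proof, and it is incomplete in exactly the place you acknowledge is the obstacle. The two decompositions themselves have to be established, not merely recalled: fully faithfulness of $\mathrm{tr}_{\geq j}\circ\Gamma_\ast$ and the identification of its orthogonal complement with $S_{\geq j}$, and likewise the decomposition $\langle P_{\geq j}, T_j\rangle$, each require an argument (Orlov's Lemma 2.3 and Proposition 2.3). More substantively, the ``mutation-theoretic comparison'' is the content of the theorem, and you do not carry it out. One needs to show precisely which of the objects $A(-\ell)$ (respectively $k(-\ell)$) survive the comparison and how they sheafify to the $\mathcal{O}_X(-\ell)$ (respectively stabilize to $k^{\mathrm{stab}}(-\ell)$), and this hinges on the Gorenstein duality between $R$ and $A$, which your sketch only gestures at via ``careful bookkeeping.'' There are also small notational slips: you reverse the order to $\langle T_j, P_{\geq j}\rangle$ where the paper (and Orlov) writes $\langle P_{\geq j}, T_j\rangle$, and you do not reconcile the index shift in $\Psi_i = \mathrm{stab}\circ\pi_i\circ\mathrm{tr}_{\geq i-n+d}\circ\Gamma_\ast$, where the truncation index $i-n+d$ and the projection index $i$ differ by exactly $a$; that offset is where the $a$ complementary objects come from, and tracking it explicitly is essential, not optional. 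To turn this into a proof you would need to supply the concrete semi-orthogonality computations, the generation statements, and the case analysis on the sign of $a$, none of which are present.
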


\begin{remark}
  The theorem still holds when \(A\) is graded by \(\mathbb{Z}\times\mu_d\),
  \cite[Thm. 3.25]{bfk2-14}.
\end{remark}

\subsection{Comparison with Orlov's Functors}

Let \(X_f\) and \(X_g\) be smooth Calabi-Yau hypersurfaces, so \(m=n=d\). Set
\(A_f\) and \(A_g\) to be the corresponding hypersurface algebras, respectively,
and \(A\) to be the hypersurface algebra of \(X = V(f\oplus
g)\subset \mathbb{P}^{2n-1}\). We consider the following functor defined using
Orlov's functors:
\[
  \Omega=\Phi_0\circ \Psi_{0,0}\colon \mathcal{D}(X_f\times X_g)\to \mathcal{D}[X/\mu_d]
\]
where \(\Psi_{0,0}\) is the embedding:
\[
  \Psi_{0,0}:= \Psi_0\otimes\Psi_0\colon \mathcal{D}(X_f)\otimes
  \mathcal{D}(X_g)\to \mathrm{HMF}^{gr}(f)\otimes \mathrm{HMF}^{gr}(g)
  \cong\mathrm{HMF}^{gr,\mu_d}(f\oplus g).
\]

Let \(p = [p_1:\cdots:p_n]\in X\) be such that \(p_i\neq 0\). Let \(l_p\)
denote the graded \(A\)-module given by \(\mathrm{tr}_{\geq 0}\circ \Gamma_\ast
(\mathcal{O}_p)\). Then there is an isomorphism
\[
  l_p\cong A/(x_1-p_1x_i,\ldots,x_n-p_nx_i).
\]
The stabilization of \(l_p\) is given by a Koszul matrix factorization,
see \cite[\S 1.6]{polish-vaintrob-cft} for the definition, say
\(l_p^{stab}\). Hence \(\Psi_0(\mathcal{O}_p) \cong l_p^{stab}\).

\begin{lemma}
  For \(p\in X_f\) and \(q\in X_g\), we have
  \[
    \Psi_{0,0}(\mathcal{O}_{(p,q)})\cong S_{p,q}^{stab}
  \]
  where \(S_{p,q}\) is the \(\mathbb{Z}\times
  \mu_d\)-graded \(A\)-module corresponding to the structure sheaf
  of the unique plane containing \(p\) and \(q\).
  \label{lem:planes-join-lines}
\end{lemma}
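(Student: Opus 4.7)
The plan is to compute $\Psi_{0,0}(\mathcal{O}_{(p,q)})$ explicitly by unpacking Orlov's functor on each factor, then applying the tensor product identification from \cite{bfk1-14}, and finally recognizing the result as the stabilization of the Koszul module cutting out $l(p,q)$ inside $X$.

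First, I would compute $\Psi_0(\mathcal{O}_p)$ on each factor. By the definition $\Psi_0 = \mathrm{stab}\circ\pi_0\circ\mathrm{tr}_{\geq 0}\circ\Gamma_\ast$ and the discussion preceding the lemma, $\Gamma_\ast(\mathcal{O}_p)$ truncates to the graded $A_f$-module $l_p \cong A_f/(x_0 - p_0 x_i,\ldots,x_n - p_n x_i)$ cutting out the affine cone on $\{p\}$, so $\Psi_0(\mathcal{O}_p) \cong l_p^{stab}$. This stabilization is the Koszul matrix factorization of $f$ built from the linear regular sequence generating that ideal, as in \cite[\S 1.6]{polish-vaintrob-cft}. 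An identical argument gives $\Psi_0(\mathcal{O}_q) \cong l_q^{stab}$.

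Second, by the definition $\Psi_{0,0} = \Psi_0 \otimes \Psi_0$ in dg enhancements, and passing through the BFK identification
\[
\mathrm{HMF}^{gr}(f)\otimes\mathrm{HMF}^{gr}(g)\cong\mathrm{HMF}^{gr,\mu_d}(f\oplus g),
\]
I obtain $\Psi_{0,0}(\mathcal{O}_{(p,q)}) \cong l_p^{stab}\otimes l_q^{stab}$ as a $\mu_d$-equivariant matrix factorization of $f\oplus g$, with the $\mu_d$-grading recording degrees in the $y$-variables. The final step is to identify this with $S_{p,q}^{stab}$. The key observation is that the two Koszul matrix factorizations are built from linear regular sequences in disjoint sets of variables, so their concatenation remains a regular sequence in $k[V_f\oplus V_g]$, and their tensor product (as curved complexes) is the Koszul matrix factorization of $f\oplus g$ associated with the concatenated sequence. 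The zero locus of this sequence is the affine $2$-plane spanned by $p$ and $q$ in $V_f\oplus V_g$, whose projectivization is precisely $l(p,q)\subset X$. Taking $\mathrm{cok}$ therefore recovers $S_{p,q}$ as a $\mathbb{Z}\times\mu_d$-graded $A$-module, so applying $\mathrm{stab}$ closes the loop.

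The main obstacle is the last step: verifying carefully that tensoring two Koszul matrix factorizations in disjoint variables yields the Koszul matrix factorization of the concatenated sequence, with the correct equivariant structure. While this is essentially a multiplicative property of Koszul complexes promoted to the curved setting, one must keep track of the $\mu_d$-weights (the $y$-variables carrying weight $\chi^{-1}$) to ensure the resulting $\mathbb{Z}\times\mu_d$-grading on $S_{p,q}$ matches the one carried naturally by $\mathcal{O}_{l(p,q)}$ as a closed substack of $[X/\mu_d]$; the cleanest route is to perform the identification on the explicit Koszul generators and verify the curvature on both sides equals $f\oplus g$, as in \cite[\S 1.6]{polish-vaintrob-cft}.
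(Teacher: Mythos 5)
Your proposal is correct and follows essentially the same chain of isomorphisms as the paper's proof, namely $\Psi_{0,0}(\mathcal{O}_{(p,q)}) \cong \Psi_0(\mathcal{O}_p)\otimes\Psi_0(\mathcal{O}_q) \cong l_p^{stab}\boxtimes l_q^{stab} \cong S_{p,q}^{stab}$, with the final step justified by the multiplicativity of Koszul matrix factorizations for regular sequences in disjoint variables. You supply more detail on the equivariant bookkeeping and the disjointness-of-variables point than the paper's one-line proof, but the argument is the same.
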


\begin{proof}
  Immediate from the previous discussion and the isomorphisms
  \[
    \Psi_{0,0}(\mathcal{O}_{(p,q)})\cong
    \Psi_0(\mathcal{O}_p)\otimes\Psi_0(\mathcal{O}_q)\cong l_p^{stab}\boxtimes
    l_q^{stab}\cong S_{p,q}^{stab}.
  \]
  where the last isomorphism comes from the fact that the Koszul matrix
  factorizations are given by a regular sequence.
\end{proof}

We now need to compute \(\Phi_0(S_{p,q}^{stab})\). To do this we first make a
digression to note that the decomposition in the Main Theorem:
\[
  \mathcal{D}[X/\mu_n] = \langle \mathcal{O}_X(-2(n-1))(\chi^{-(n-1)}),\ldots,
  \mathcal{O}_X(-1)(\chi^{0,-1}),\mathcal{O}_X,\Xi_{0,0}\mathcal{D}(X_f\times
  X_g)\rangle
\]
doesn't quite match Theorem \ref{thm:orlovs-thm-sing-dx}:
\[
  \mathcal{D}[X/\mu_n] = \langle
  \mathcal{O}_X(-(n-1))(\chi^{0,\ldots,n-1}),\ldots,\mathcal{O}_X(\chi^{0,\ldots,n-1}),
  \Omega\mathcal{D}(X_f\times X_g)\rangle.
\]

To that end, we set \(\mathcal{A}_{\geq -(n-1)}\) to be the full subcategory
generated of \(\mathcal{D}[X/\mu_d]\) generated by
\[
  \mathcal{O}_X(-(n-1))(\chi^{0,\ldots,-(n-1)}),\ldots,\mathcal{O}_X(\chi^{0,-1}),\mathcal{O}_X
\]
and \(\mathcal{A}_{<-(n-1)}\) to be the one generated by
\[
  \mathcal{O}_X(-2(n-1))(\chi^{-(n-1)}),\ldots,
  \mathcal{O}_X(-n)(\chi^{-1,\ldots,-(n-1)})
\]
so that the decomposition from Theorem \ref{thm:main-result} is
\[
  \mathcal{D}[X/\mu_n] = \langle \Xi_{0,0}\mathcal{D}(X_f\times
  X_g),\mathcal{A}_{<-(n-1)},\mathcal{A}_{\geq -(n-1)}\rangle.
\]

To match the decomposition of Theorem \ref{thm:orlovs-thm-sing-dx} we perform
some mutations and use the Serre functor to see that
\[
  \mathcal{D}[X/\mu_n] = \langle \mathcal{O}_X(-(n-1))(\chi^{all}),\ldots,
  \mathcal{O}_X(\chi^{all}),
  R_{\mathcal{A}_{<-(n-1)}(n)}(\Xi_{0,0}(\mathcal{D}(X_f\times X_g)))\rangle.
\]

Since \(\mathrm{cok}(S_{p,q}^{stab}) \cong S_{p,q}\) and the sheafification is
\(\mathcal{O}_{l(p,q)} = \Xi_{0,0}(\mathcal{O}_{(p,q)})\), we will be done if we
can see that the mutations that we have to do in \(\mathcal{D}[X/\mu_d]\) agree
with those that we need to do to mutate \(S_{p,q}\) into \(T_0\). Indeed, the
object \(S_{p,q}\) is already left orthogonal to \(S_{<0}\), so we just need to
find a representative in \(T_0\) mapping onto \(S_{p,q}\).

\begin{lemma}
  We have the vanishing
  \[
    \mathrm{Ext}^\ast_{gr-A}(S_{p,q},A(e)(\chi^i))^{\mu_d}=0
  \]
  for \(-n+1\geq e \geq 0\) and \(e\leq i\leq 0\).
  \label{lem:vanishing-spq}
\end{lemma}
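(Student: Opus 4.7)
The plan is to resolve $S_{p,q}$ explicitly and then run a change-of-rings computation. Write $S_{p,q} = R/(u_1,\ldots,u_{n-1},v_1,\ldots,v_{n-1})$, where the $u_i$ are linear forms in the $x$-variables (of trivial $\mu_d$-weight) cutting out $p\in X_f$ and the $v_j$ are linear forms in the $y$-variables (of weight $-1$) cutting out $q\in X_g$. Since these $2n-2$ forms are a regular sequence in $R$, the Koszul complex $K_\bullet(u,v)$ is a minimal $R$-free resolution of $S_{p,q}$, with $k$-th term
\[
	K_k \cong \bigoplus_{a+b=k} R(-k)(\chi^{-b})^{\binom{n-1}{a}\binom{n-1}{b}}.
\]

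First I would compute $\Ext^\ast_R(S_{p,q}, A(e)(\chi^i))$ by dualizing this Koszul. The dual complex inherits the same $(a,b)$-bigrading, and taking the $\mu_d$-invariant, $\mathbb{Z}$-degree-zero piece isolates exactly those summands with $b+i\equiv 0\pmod n$. Because $f+g\in (u,v)$ (as $f$ vanishes on $V(u)$ and $g$ on $V(v)$), the element $f+g$ acts by zero on $S_{p,q}$, and the long exact sequence associated to $0\to R(-n)\to R\to A\to 0$ shows $\Ext^\ast_R(S_{p,q},A(e)(\chi^i))$ is concentrated in cohomological degrees $2n-3$ and $2n-2$, the top being $S_{p,q}(2n-2+e)(\chi^{n-1+i})$.

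Next, I would pass from $\Ext^\ast_R$ to $\Ext^\ast_A$ via the change-of-rings spectral sequence $E_2^{p,q}=\Ext^p_A(\mathrm{Tor}^R_q(A,S_{p,q}),A(e)(\chi^i))\Rightarrow \Ext^{p+q}_R(S_{p,q},A(e)(\chi^i))$. Vanishing of $(f+g)\cdot S_{p,q}$ gives $\mathrm{Tor}^R_0=S_{p,q}$ and $\mathrm{Tor}^R_1=S_{p,q}(-n)$. Combined with Eisenbud's matrix-factorization periodicity and the fact that $A(e)(\chi^i)$ is stably zero in $\mathcal{D}^{gr}_{Sg}(A)$, one obtains $\Ext^\ast_A(S_{p,q},A(e)(\chi^i))=0$ for $\ast>2n-2$, which propagates back down the pages of the spectral sequence.

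Finally I would extract the $(0,0)$-bigraded component. Using $S_{p,q}\cong k[s,t]$ with $s$ of weight $0$ and $t$ of weight $-1$, a weight-zero monomial $s^a t^b$ in the prescribed $\mathbb{Z}$-degree must satisfy $-b\equiv n-1+i\pmod n$ and $a+b$ equal to a specified value (either $n-2+e$ or $2n-2+e$, depending on the cohomological degree). Under the hypothesized range $-n+1\leq e\leq i\leq 0$, the unique candidate exponent forces $a<0$, giving the stated vanishing. The main obstacle is bookkeeping this passage from $R$-Ext to $A$-Ext, since $(u_1,\ldots,u_{n-1},v_1,\ldots,v_{n-1})$ fails to be regular in $A$ by exactly one relation, $f+g\equiv 0$; this is precisely what creates the matrix-factorization tail of the $A$-free resolution of $S_{p,q}$, and its stable vanishing against $A(e)(\chi^i)$ is what makes the spectral sequence collapse to the desired form.
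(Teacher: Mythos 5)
Your overall plan is more circuitous than the paper's, and it contains a concrete computational error that invalidates the final step as written. The paper applies the Rees change-of-rings isomorphism $\Ext^{\ast}_{gr\text{-}A}(M,A)\cong\Ext^{\ast+1}_{gr\text{-}R}(M,R(-n))$ directly, which converts the $A$-Ext into a single $R$-Ext group concentrated in one cohomological degree; the Koszul resolution then gives $\Ext^{\ast}_A(S_{p,q},A(e)(\chi^i))\cong S_{p,q}(n-2+e)(\chi^{i-1})[3-2n]$, and the degree-zero, $\mu_d$-invariant piece is checked to vanish in one stroke. You instead compute $\Ext^{\ast}_R(S_{p,q},A(e)(\chi^i))$, which is spread over two cohomological degrees $2n-3$ and $2n-2$, and then try to invert the Cartan--Eilenberg spectral sequence (which converges \emph{to} $\Ext_R$, not \emph{from} it) using an appeal to stable vanishing and Eisenbud periodicity.

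The decisive gap is the final claim that "the unique candidate exponent forces $a<0$" for \emph{both} internal degrees $n-2+e$ and $2n-2+e$. This is false for $2n-2+e$: with $b=n-1+i$ (nonnegative because $e\geq -n+1$ forces $n-1+i\geq 0$) one gets $a=(2n-2+e)-(n-1+i)=n-1+e-i$, which is also nonnegative since $e-i\geq 1-n$. So $s^{n-1+e-i}\,t^{n-1+i}$ is a genuine weight-zero, degree-zero monomial, and the degree-zero $\mu_d$-invariant piece of $S_{p,q}(2n-2+e)(\chi^{n-1+i})$ does \emph{not} vanish. The underlying reason this is not a catastrophe for the lemma is that $\Ext^{2n-2}_R(S_{p,q},A(e)(\chi^i))$ feeds, through the two-row spectral sequence / Bockstein-type long exact sequence, into $\Ext^{2n-3}_A(S_{p,q},A(e+n)(\chi^i))$, whose internal twist $e+n$ lies \emph{outside} the hypothesized range $[-n+1,0]$; that group is allowed to be nonzero. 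Only the $\Ext^{2n-3}_R$ piece (internal degree $n-2+e$) surjects onto the group $\Ext^{2n-3}_A(S_{p,q},A(e)(\chi^i))$ that the lemma is actually about. Without tracking this distinction, your argument proves nothing; with it, the spectral-sequence route is salvageable but takes considerably more bookkeeping than the paper's single application of the Rees isomorphism.
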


\begin{proof}
  Using the isomorphism
  \[
    \mathrm{Ext}^\ast_{gr-A}(M,A)\cong
    \mathrm{Ext}^{\ast+1}_{gr-R}(M,R(-n))
  \]
  we have the following isomorphism computed by taking a Koszul complex
  \[
    \mathrm{Ext}^\ast_A(S_{p,q},A(e)(\chi^i)) \cong
    S_{p,q}(n-2+e)(\chi^{i-1})[3-2n].
  \]
  The statement follows from taken degree 0 peices and \(\mu_d\)-invariants.
\end{proof}

\begin{lemma}
  There is an isomorphism
  \[
    \mathrm{Ext}^\ast_{gr-A}(S_{p,q},A(e)(\chi^i)) \cong
    \mathrm{Ext}_X^\ast(\mathcal{O}_{l(p,q)},\mathcal{O}_X(e)(\chi^i))
  \]
  for \(-n+1\leq e\leq 0\) and \(-n\leq i<e\).
  \label{lem:equality-ext-spq}
\end{lemma}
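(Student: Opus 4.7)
The plan is to compute both sides explicitly via Koszul-type resolutions and match them, with the stated range of $(e,i)$ being precisely what makes the local-cohomology corrections distinguishing graded Ext from sheaf Ext vanish.

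For the left-hand side, I would reuse the computation from the proof of Lemma~\ref{lem:vanishing-spq}: resolve $S_{p,q}$ over $R = k[x_\bullet,y_\bullet]$ by the equivariant Koszul complex on the $2n-2$ linear forms cutting out $l(p,q)\subset\mathbb{P}^{2n-1}$, translate from $R$ to $A$ using $0\to R(-n)\to R\to A\to 0$, and read off the degree-zero $\mu_d$-invariant piece of $S_{p,q}(n-2+e)(\chi^{i-1})[3-2n]$. The answer is concentrated in cohomological degree $2n-3$ and equals the $\chi^{1-i}$-isotypic part of $(S_{p,q})_{2-n-e}$.

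For the right-hand side, I would use the local-to-global Ext spectral sequence
\[
E_2^{r,s} = H^r(l(p,q), \wedge^s\mathcal{N}^\vee \otimes \mathcal{O}(e)\chi^i) \Rightarrow \mathrm{Ext}^{r+s}_{[X/\mu_d]}(\mathcal{O}_{l(p,q)}, \mathcal{O}_X(e)\chi^i),
\]
combined with the splitting of the normal bundle $\mathcal{N}$ from Lemma~\ref{lem:normal-bundle-splitting}, specialized to the Calabi-Yau case $m=n=d$: a sum of $\mathcal{O}(1)$-twists with characters $\mathbf{1}$ or $\chi$, together with one distinguished summand $\mathcal{O}(2-n)\chi$. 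Each $E_2$-entry then reduces to an equivariant cohomology of a line bundle on $\mathbb{P}^1\cong l(p,q)$ with a fixed $\mu_d$-weight. Under the constraints $-n+1\leq e\leq 0$ and $-n\leq i<e$, all but one of these cohomologies vanish, and the spectral sequence collapses in total degree $2n-3$ onto the single term coming from the top wedge power involving $\mathcal{O}(2-n)\chi$.

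Matching the two surviving terms proceeds via Serre duality on $\mathbb{P}^1$ and the identification $S_{p,q} \cong \bigoplus_k H^0(l(p,q),\mathcal{O}(k))$ with its inherited $\mu_d$-bigrading. The main obstacle I anticipate is the combinatorial bookkeeping of $\mu_d$-weights: one must verify that the constraint $i<e$ is sharp in forcing the distinguished summand $\mathcal{O}(2-n)\chi$ to appear in the top wedge on the sheaf side, and that the resulting $\mu_d$-character matches the shift $\chi^{i-1}$ on the module side after taking invariants; once these match, the dimension count on $\mathbb{P}^1$ yields the claimed isomorphism term-by-term.
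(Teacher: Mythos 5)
Your overall plan—compute both sides explicitly and identify the surviving term—is reasonable, and your treatment of the module side (reuse the Koszul computation from Lemma~\ref{lem:vanishing-spq}) matches what the paper does. However, the sheaf-side computation contains a genuine error: the local-to-global spectral sequence you wrote,
\[
E_2^{r,s} = H^r\bigl(l(p,q), \wedge^s\mathcal{N}^\vee \otimes \mathcal{O}(e)\chi^i\bigr)
\Rightarrow \mathrm{Ext}^{r+s}_{[X/\mu_d]}(\mathcal{O}_{l(p,q)}, \mathcal{O}_X(e)\chi^i),
\]
is not the correct one. The $E_2$-page of the local-to-global Ext spectral sequence is $H^r\bigl(X,\mathcal{E}xt^s_X(\mathcal{O}_{l(p,q)},\mathcal{O}_X(e)\chi^i)\bigr)$, and for $l(p,q)\subset X$ a smooth closed substack of codimension $c=2n-3$ with $\mathcal{O}_X(e)\chi^i$ locally free, the local Ext sheaf $\mathcal{E}xt^s_X(\mathcal{O}_{l(p,q)},\mathcal{O}_X(e)\chi^i)$ vanishes for $s\neq c$ and equals $\det\mathcal{N}\otimes\mathcal{O}(e)\chi^i\big|_{l(p,q)}$ at $s=c$; it is \emph{not} $\wedge^s\mathcal{N}^\vee\otimes\mathcal{O}(e)\chi^i$ in degree $s$. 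You have conflated this with the self-Ext computation $\mathcal{E}xt^s(\mathcal{O}_{l(p,q)},\mathcal{O}_{l(p,q)})\cong\wedge^s\mathcal{N}$ used in Theorem~\ref{thm:xi-qff} (and even there the exterior power is of $\mathcal{N}$, not $\mathcal{N}^\vee$). So the ``collapse onto the top wedge power'' happens for a different and more immediate reason than the one you describe, and the intermediate terms you propose to analyze are not actually present in the spectral sequence.

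Even with this fixed, your route goes through the normal bundle of $l(p,q)$, whereas the paper avoids the normal bundle entirely: it applies Serre duality on $[X/\mu_d]$ (Proposition~\ref{prop:serre-duality}, with $m=n=d$) to move $\mathcal{O}_{l(p,q)}$ to the target slot, reducing the sheaf Ext group directly to the cohomology of a single line bundle on $l(p,q)\cong\mathbb{P}^1$; a second Serre duality on the quotient of $\mathbb{P}^1$ then lands exactly on the degree-zero invariant piece of $S_{p,q}(n+e-2)\otimes\chi^{i-1}$ in cohomological degree $2n-3$, which is precisely the module-side answer from Lemma~\ref{lem:vanishing-spq}. This is substantially shorter: no spectral sequence, no bookkeeping of wedge powers of the normal bundle, and the $\mu_d$-weight match ($\chi^{n-1+i}\equiv\chi^{i-1}$ since $d=n$) falls out automatically from the two dualities.
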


\begin{proof}
  We compute:
  \begin{align*}
    \mathrm{Ext}^\ast_X(\mathcal{O}_{l(p,q)},\mathcal{O}_X(e)(\chi^i))&\cong
    \mathrm{Ext}^{2n-2-\ast}_X(\mathcal{O}_X(e)(\chi^i),\mathcal{O}_X(-n)) \\
    &\cong H^{2n-2-\ast}(\mathcal{O}_{l(p,q)}(-e-n)(\chi^{-i})) \\
    &\cong H^{3-2n+\ast}(\mathcal{O}_{l(p,q)}(n+e-2)(\chi^{i-1})) \\
    &\cong (S_{p,q}(n+e-2)(\chi^{i-1}))_0[3-2n]
  \end{align*}
  
  The claim follows from the computation in Lemma \ref{lem:vanishing-spq}
\end{proof}

\begin{theorem}
  For a point \( (p,q)\in X_f\times X_g\), we have
  \[
    \Omega(\mathcal{O}_{(p,q)})\cong
    (\mathbf{R}_{\mathcal{A}_{<-(n-1)}(n)}\circ\Xi_{0,0})(\mathcal{O}_{p,q}).
  \]
  \label{thm:agrees-points}
\end{theorem}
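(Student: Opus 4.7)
Unwinding the composition $\Omega = \Phi_0 \circ \Psi_{0,0}$ and using Lemma~\ref{lem:planes-join-lines}, we have
\[
  \Omega(\mathcal{O}_{(p,q)}) \cong \mathrm{sh}\bigl(\pi_0^{-1}(\mathrm{cok}(S_{p,q}^{stab}))\bigr) = \mathrm{sh}\bigl(\pi_0^{-1}(S_{p,q})\bigr).
\]
Since the direct sheafification $\mathrm{sh}(S_{p,q}) = \mathcal{O}_{l(p,q)} = \Xi_{0,0}(\mathcal{O}_{(p,q)})$ already recovers the Fourier--Mukai image, the theorem reduces to showing that the projection $\pi_0^{-1}$ realizes, after sheafification, precisely the geometric right mutation $\mathbf{R}_{\mathcal{A}_{<-(n-1)}(n)}$ applied to $\mathcal{O}_{l(p,q)}$.

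The first step is to form the canonical triangle coming from the semi-orthogonal decomposition $\mathcal{D}(\mathrm{gr\text{-}}A_{\geq 0}) = \langle P_{\geq 0}, T_0 \rangle$:
\[
  P \to S_{p,q} \to \pi_0^{-1}(S_{p,q}) \to P[1],
\]
with $P \in P_{\geq 0}$ and $\pi_0^{-1}(S_{p,q}) \in T_0$. (As the author notes, $S_{p,q}$ is already left orthogonal to $S_{<0}$, so this is the only mutation needed.) Since sheafification is exact and sends $A(e)(\chi^i)$ to $\mathcal{O}_X(e)(\chi^i)$, applying it yields the candidate triangle
\[
  \mathrm{sh}(P) \to \mathcal{O}_{l(p,q)} \to \Omega(\mathcal{O}_{(p,q)}) \to \mathrm{sh}(P)[1]
\]
in $\mathcal{D}[X/\mu_d]$.

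The second step is to identify this with the right-mutation triangle of $\mathbf{R}_{\mathcal{A}_{<-(n-1)}(n)}$. For this it suffices to verify (a) $\mathrm{sh}(P) \in \mathcal{A}_{<-(n-1)}(n)$, and (b) $\mathrm{Ext}^\ast_{[X/\mu_d]}(\mathcal{E}, \Omega(\mathcal{O}_{(p,q)})) = 0$ for every generator $\mathcal{E}$ of $\mathcal{A}_{<-(n-1)}(n)$. For (a), express $P$ as a convolution over the generators $A(e)(\chi^i)$ of $P_{\geq 0}$; by the Gorenstein graded duality used in the proof of Lemma~\ref{lem:vanishing-spq}, the coefficient spaces are dual to the groups $\mathrm{Ext}^\ast_{\mathrm{gr\text{-}}A}(S_{p,q}, A(e)(\chi^i))^{\mu_d}$. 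These vanish on the range $-n+1 \leq e \leq 0$, $e \leq i \leq 0$, forcing $P$ to be built only from the complementary range $-n \leq i < e$ within a fundamental $\mu_d$-domain, whose sheafifications are exactly the line bundles constituting $\mathcal{A}_{<-(n-1)}(n)$. For (b), the fact that $\pi_0^{-1}(S_{p,q}) \in T_0 = P_{\geq 0}^\perp$ gives the required vanishing on the module side, and Lemma~\ref{lem:equality-ext-spq} identifies these module-Ext groups with the geometric Ext groups into $\mathcal{O}_X(e)(\chi^i)$ in precisely the range of twists appearing in $\mathcal{A}_{<-(n-1)}(n)$; transferring the vanishing along this isomorphism gives the orthogonality on the sheaf side.

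The main obstacle is the meticulous tracking of the $\mathbb{Z}\times\mu_d$-grading and the need to switch between Hom in one direction versus the other. Lemmas~\ref{lem:vanishing-spq} and~\ref{lem:equality-ext-spq} are crafted so that, after the Gorenstein duality swap, the range of nonvanishing Ext's that build $P$ matches exactly the line bundles in $\mathcal{A}_{<-(n-1)}(n)$, while the range of vanishing Ext's inherited from $\pi_0^{-1}(S_{p,q}) \in T_0$ transports under sheafification into the defining orthogonality of $\mathbf{R}_{\mathcal{A}_{<-(n-1)}(n)}$. Verifying that together these two lemmas cover all relevant $(e,i)$-pairs on both sides, and that the resulting triangles really are isomorphic (not merely having the same cohomological shape), is where the bulk of the bookkeeping sits.
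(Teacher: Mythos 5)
Your proposal follows the same route as the paper: reduce via Lemma~\ref{lem:planes-join-lines} to computing $\mathrm{sh}(\pi_0^{-1}(S_{p,q}))$, and then use Lemmas~\ref{lem:vanishing-spq} and~\ref{lem:equality-ext-spq} to match the $\langle P_{\geq 0},T_0\rangle$-projection triangle on the module side with the right-mutation triangle past $\mathcal{A}_{<-(n-1)}(n)$ on the sheaf side. The paper's own proof is the one line ``Follows from Lemmas~\ref{lem:vanishing-spq} and~\ref{lem:equality-ext-spq},'' so you are essentially supplying the reasoning the author left implicit. One small correction: with the paper's convention for $\langle P_{\geq 0},T_0\rangle$, the decomposition triangle should run $\pi_0^{-1}(S_{p,q}) \to S_{p,q} \to P$, not $P \to S_{p,q} \to \pi_0^{-1}(S_{p,q})$; accordingly the orthogonality you must verify for the right mutation is $\mathrm{Ext}^\ast_{[X/\mu_d]}(\Omega(\mathcal{O}_{(p,q)}),\mathcal{E})=0$, which is indeed the direction that Lemma~\ref{lem:equality-ext-spq} and the vanishing from $\pi_0^{-1}(S_{p,q})\in T_0=P_{\geq 0}^{\perp}$ actually supply. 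Fixing these signs does not change the substance of the argument.
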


\begin{proof}
  Follows from Lemmas \ref{lem:vanishing-spq} and \ref{lem:equality-ext-spq}.
\end{proof}

\begin{corollary}
  The functors \(\Xi_{0,0}\) and \(\Omega\) agree up to a twist by a line
  bundle.
\end{corollary}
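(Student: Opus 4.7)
The plan is to upgrade Theorem \ref{thm:agrees-points} from an identification on the spanning class $\{\mathcal{O}_{(p,q)}\}$ to an identification of the two functors themselves, and then to recognize the resulting object as $\Xi_{i,j}$ for a suitable pair $(i,j)$. First I would show that the mutation $\mathbf{R}_{\mathcal{A}_{<-(n-1)}(n)}$ applied to $\Xi_{0,0}(\mathcal{O}_{(p,q)}) = \mathcal{O}_{l(p,q)}$ yields an object of the form $\mathcal{O}_{l(p,q)}(a) \otimes \chi^b$ for some integers $a,b$ independent of the point $(p,q)$. This is a Koszul-type calculation: the subcategory $\mathcal{A}_{<-(n-1)}(n)$ is generated by a well-understood collection of twisted line bundles, and the mutation of a line on $X$ through them can be computed explicitly using the sequences of Section \ref{sec:koszul}, since the $\Ext$ groups $\Ext^\ast(\mathcal{O}_X(e)\otimes\chi^i,\mathcal{O}_{l(p,q)})$ are controlled by global section computations on $\mathbb{P}^1$.

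Once the mutated object is identified as $\mathcal{O}_{l(p,q)}(a)\otimes\chi^b = \Xi_{a,b}(\mathcal{O}_{(p,q)})$, I would observe that $\Omega$ and $\Xi_{a,b}$ are both fully-faithful Fourier--Mukai functors from $\mathcal{D}(X_f \times X_g)$ to $\mathcal{D}[X/\mu_d]$ which agree on the spanning class of structure sheaves of closed points. Indeed, $\Psi_0$ and $\Phi_0$ admit dg-enhancements (see the footnote in the introduction), so $\Omega = \Phi_0 \circ \Psi_{0,0}$ is given by a Fourier--Mukai kernel, as is $\Xi_{a,b}$ by construction. A Fourier--Mukai functor is determined by its action on closed points up to a line bundle twist on the kernel; equivalently, since both functors have right adjoints and they agree on a spanning class together with the morphisms between such objects, one can invoke Theorem \ref{thm:swapping-functors} together with the reconstruction of Fourier--Mukai kernels to conclude $\Omega \cong \Xi_{a,b}$ as exact functors.

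Finally, $\Xi_{a,b} = \Xi_{0,0}(-) \otimes \mathcal{O}_X(aH) \otimes \chi^b$ by the very definition of $\Xi_{i,j}$ via the kernel $\iota_\ast\mathcal{O}_Y \otimes \pi_X^\ast\mathcal{O}_X(iH)\otimes\chi^j$, so $\Omega$ and $\Xi_{0,0}$ differ by twisting by the equivariant line bundle $\mathcal{O}_X(aH)\otimes\chi^b$.

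I expect the main obstacle to be the first step: tracking the iterated mutations through the full subcategory $\mathcal{A}_{<-(n-1)}(n)$, since this subcategory contains many twisted line bundles and each mutation step introduces a new extension. The saving grace is that for the single object $\mathcal{O}_{l(p,q)}$ the relevant $\Ext$ groups are concentrated in very specific degrees by the normal bundle computation of Lemma \ref{lem:normal-bundle-splitting}, so the mutation telescopes to a clean line bundle twist. Showing independence of $(p,q)$ is automatic once the mutation is expressed in terms of the Fourier--Mukai kernel.
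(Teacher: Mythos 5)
The central difficulty with your proposal is Step~1: you assert that the right mutation $\mathbf{R}_{\mathcal{A}_{<-(n-1)}(n)}$ applied to $\mathcal{O}_{l(p,q)}$ yields a line bundle twist $\mathcal{O}_{l(p,q)}(a)\otimes\chi^{b}$. This is false in general, and the paper never claims it --- indeed the section's opening sentence states explicitly that $\Xi_{0,0}$ agrees with Orlov's functor ``up to a \emph{mutation and} a twist by a line bundle,'' i.e.\ the mutation is a genuinely nontrivial additional operation. To see the failure concretely, take $m=n=d=2$. Then $\mathcal{A}_{<-(n-1)}(n)$ is generated by the single line bundle $\mathcal{O}_X\otimes\chi^{-1}$, and a short Serre duality computation on the $\mu_2$-equivariant line $l(p,q)\cong\mathbb{P}^1$ gives
\[
\mathbf{R}\mathcal{H}om_{[X/\mu_2]}(\mathcal{O}_{l(p,q)},\mathcal{O}_X\otimes\chi^{-1})\cong k[-1],
\]
so the right mutation triangle
\[
\mathbf{R}(\mathcal{O}_{l(p,q)})\to\mathcal{O}_{l(p,q)}\to(\mathcal{O}_X\otimes\chi^{-1})[1]
\]
realizes $\mathbf{R}(\mathcal{O}_{l(p,q)})$ as a nonsplit extension of $\mathcal{O}_{l(p,q)}$ by $\mathcal{O}_X\otimes\chi^{-1}$. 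In particular the mutated object is supported on all of $X$, not on $l(p,q)$, and cannot be any twist $\mathcal{O}_{l(p,q)}(a)\otimes\chi^{b}$. Everything downstream of Step~1 inherits this error: your final conclusion is a \emph{target}-side twist $\Omega\cong\Xi_{0,0}(-)\otimes\mathcal{O}_X(aH)\otimes\chi^{b}$, which would force the mutation to be a twist, contradicting the computation above.

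The paper's actual argument is shorter and different in kind. Theorem~\ref{thm:agrees-points} shows that $\Omega$ and the \emph{mutated} functor $\mathbf{R}_{\mathcal{A}_{<-(n-1)}(n)}\circ\Xi_{0,0}$ (both fully faithful, both landing in the same component of the decomposition) agree on the structure sheaves $\mathcal{O}_{(p,q)}$. The proof then invokes Huybrechts' Corollary~5.23: the resulting autoequivalence of $\mathcal{D}(X_f\times X_g)$ (obtained by composing one functor's quasi-inverse with the other) sends every skyscraper to itself, so it is of the form $(L\otimes-)\circ f_\ast$ with $f=\mathrm{Id}$; hence the two functors differ by a \emph{source}-side twist by $L$. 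The statement of the corollary is a mild shorthand for this (absorbing the mutation, as flagged in the section introduction). Your closing appeal to Theorem~\ref{thm:swapping-functors} is also misapplied: that result shows a fully faithful functor factors through a saturated subcategory containing the image of a spanning class, which does not by itself identify two functors; the uniqueness-up-to-twist step really does require the Huybrechts result, or an equivalent Fourier--Mukai rigidity statement applied precisely.
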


\begin{proof}
  Since the image of structure sheaves of closed points agree by Theorem
  \ref{thm:agrees-points}, the images of \(\Xi_{0,0}\) and \(\Omega\) agree.
  Denote also by \(\Omega\), the restriction of \(\Omega\) to its image so that
  it is an equivalence. Then the composite functor \(\Omega^{-1}\circ\Xi_{0,0}\)
  is an endofunctor of \(\mathcal{D}(X_f\times X_g)\) satisfying
  \[
    \Omega^{-1}\circ\Xi_{0,0}(\mathcal{O}_{p,q})\cong\mathcal{O}_{p,q}.
  \]
  By\cite[Corollary 5.23]{huybrechts-fourier}, there exists a line bundle
  \(\mathcal{L}\) on \(X_f\times X_g\) such that
  \[
    \Omega^{-1}\circ \Xi_{0,0}\cong (?)\otimes\mathcal{L}
  \]
  and hence
  \[
    \Xi_{0,0}\cong \Omega( (?)\otimes\mathcal{L}).
  \]
\end{proof}

\section{Special Cases}
\label{sec:special-cases}

For completeness, we devote this section to understanding \([X/\mu_d]\) when \(m
= 1\). We will independently study \(n=1\) and \(n>1\). In the case \(n>1\), the
hypersurface \(X\) is called a \textit{cyclic hypersurface}. We also compare the
decompositions to the work in \cite{kuznetsov-perry}

\subsection{The case \(n=1\).}
\label{ssec:case-n1}

Let \( m,n=1\), then \(\mathcal{D}(X_f)\) and \(\mathcal{D}(X_g)\) will not
appear. The associated quotient stack is, up to a change of coordinates, \(X =
V(x^d+y^d)\subset\mathbb{P}^1\). In particular, \(|X| = d\) and the \(\mu_d\)
action permutes the linear factors. It follows that \([X/\mu_d]\) is a scheme
and is represented by \(\Spec(k)\). There is a single exceptional object given
by \(\mathcal{O}_X\) and so \[ \mathcal{D}[X/\mu_d] = \langle
\mathcal{O}_X\rangle.  \]

\subsection{The case \(n>1\).}

Let \(n>1\). Then \(f(x) = x^d\) and \(g(y_1,\ldots,y_n)\)
be a degree \(d\) polynomial defining a smooth hypersurface in
\(\mathbb{P}^{n-1}\). Let \(\pi\colon X\to \mathbb{P}^{n-1}\) be the linear projection
onto the \(y\) variables. This is well defined since \([1:0:\ldots:0]\notin X\).
The map \(\pi\) is a degree \(d\) mapping ramified along the divisor
\(\iota_g\colon X_g\hookrightarrow X\). In particular, we have the following
commutative diagram.
\[
	\begin{tikzcd}
		{} & X\ar{d}{\pi} \\
		X_g\ar{ur}{\iota_g} \ar{r} & \mathbb{P}^{n-1}
	\end{tikzcd}
\]
Endowing \(\mathbb{P}^{n-1}\) with the trivial \(\mu_d\) action renders the
diagram \(\mu_d\)-equivariant. Moreover, it is not hard to see that \(\pi\)
exhibits \(\mathbb{P}^{n-1}\) as a coarse moduli space. 

\begin{theorem}
	There is a semi-orthogonal decomposition
	\[
		\mathcal{D}[X/\mu_d] = \langle \mathcal{D}_g^1,\ldots,
		\mathcal{D}_g^{d-1}, \pi^\ast\mathcal{D}(\mathbb{P}^{n-1})\rangle,
	\]
	where \(\mathcal{D}_g^i = \iota_{g\ast}\mathcal{D}(X_g)\otimes\chi^i\).
	\label{thm:sod-m1}
\end{theorem}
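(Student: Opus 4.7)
The plan is to reduce the statement directly to Theorem \ref{thm:sod-invariant-divisor}, the general semi-orthogonal decomposition for a $\mu_d$-action on a smooth projective variety with a smooth fixed divisor. All five hypotheses of that theorem can be checked explicitly in this very rigid situation, after which the conclusion is immediate.

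First I would identify the fixed locus and verify the geometric hypotheses. Since $m=1$, the potential $f$ is just $x^d$, so the fixed locus of $\mu_d$ on $\mathbb{P}^n$ is $H_x \sqcup H_y = \{[1:0:\cdots:0]\} \sqcup V(x)$. The point $[1:0:\cdots:0]$ does not lie on $X$ because $1^d \neq 0$, so $X^{\mu_d} = X \cap V(x) = X_g$, which is smooth of codimension one in $X$. The map $\pi\colon X \to \mathbb{P}^{n-1}$ is a degree-$d$ cyclic cover branched along $X_g$, the complement $X \setminus X_g$ has free $\mu_d$-action, and the quotient $X/\mu_d = \mathbb{P}^{n-1}$ is smooth. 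So all the geometric hypotheses of Theorem \ref{thm:sod-invariant-divisor} are satisfied.

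The only non-formal step is to compute the normal bundle $\mathcal{N}_{X_g/X}$ as a $\mu_d$-equivariant bundle and verify that it has the form $\mathcal{L}\otimes \chi^{-1}$ required in the hypothesis. The conormal bundle is cut out by the single section $x\in \Gamma(\mathcal{O}_X(1))$, which has trivial $\mu_d$-weight; however, under the paper's convention that $\mathcal{O}(-1)$ is linearized by fiberwise multiplication by $\lambda$, the restriction $\mathcal{O}_X(1)|_{X_g}$ acquires a twist by $\chi^{-1}$ relative to the canonical equivariant structure on $\mathcal{O}_{X_g}(1)$ (with trivial $\mu_d$-action on $X_g$). The net result is
\[
\mathcal{N}_{X_g/X} \;\cong\; \mathcal{O}_{X_g}(1)\otimes \chi^{-1},
\]
which is exactly of the form $\mathcal{L}\otimes \chi^{-1}$ with $\mathcal{L} = \mathcal{O}_{X_g}(1)$. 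This tracking of the character twist is the most delicate step, but it is essentially the same computation that was used to derive the Serre functor on $[\mathbb{P}^{m+n-1}/\mu_d]$ from the $\mu_d$-equivariant Euler sequence in Section \ref{sec:geometry-pn}.

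With the hypotheses verified, Theorem \ref{thm:sod-invariant-divisor} applied to $Z = X_g \subset X$ yields
\[
\mathcal{D}[X/\mu_d] \;=\; \langle \iota_{g\ast}\mathcal{D}(X_g)\otimes\chi,\ldots, \iota_{g\ast}\mathcal{D}(X_g)\otimes\chi^{d-1}, \pi^\ast\mathcal{D}(\mathbb{P}^{n-1})\rangle,
\]
which is precisely the stated decomposition with $\mathcal{D}_g^i = \iota_{g\ast}\mathcal{D}(X_g)\otimes\chi^i$. No further mutation or reordering is needed because $\mathcal{N}_{X_g/X}$ has weight purely $\chi^{-1}$ (as opposed to the mixed-weights situation on $[\mathbb{P}^1/\mu_d]$ which required the reordering in Theorem \ref{thm:sod-mud-p1}). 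The only potential obstacle is a careful bookkeeping of the equivariant twist on the normal bundle, and once that is pinned down the theorem is an immediate corollary.
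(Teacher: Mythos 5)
Your proof is correct and takes essentially the same approach as the paper: the paper's proof simply observes that the $\mu_d$-action on $X\setminus X_g$ is free and then invokes Theorem \ref{thm:sod-invariant-divisor}. Your version spells out the verification of the remaining hypotheses (including the computation $\mathcal{N}_{X_g/X}\cong\mathcal{O}_{X_g}(1)\otimes\chi^{-1}$, consistent with the paper's remark that $\mathcal{O}_X(iH)|_{X_g}\cong\mathcal{O}_{X_g}(ih)\otimes\chi^{-i}$), but the route is identical.
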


\begin{proof}
	The action of \(\mu_d\) on \(X\setminus X_g\) is easily seen to be free and
  the action of \(\mu_d\) on the normal bundle to the embedding
  \(X_g\hookrightarrow X\) has constant weight \(\chi\). The result follows from
  a quick adaptation of \cite[Thm 4.1]{kuznetsov-perry}.
\end{proof}

\subsection{Derived Categories of Cyclic Covers.}

The case of a cyclic cover of a variety was investigated in \cite[\S
8.3]{kuznetsov-perry}. In particular, they discuss the equivariant derived
category of cyclic hypersurfaces where \(d\leq n\), here \(X\subset
\mathbb{P}^n\) and \(X_g\subset\mathbb{P}^{n-1}\). For completeness, we recall
their result. Since \(d\leq n\), we have the standard semi-orthogonal
decomposition of a hypersurface
\[
	\mathcal{D}(X) = \langle \mathcal{A}_X,\mathcal{O}_X,\ldots,
	\mathcal{O}_X(d-n)\rangle,
\]
where \(\mathcal{A}_X\) is characterized as the right orthogonal to \(\langle
\mathcal{O}_X,\ldots,\mathcal{O}_X(d-n)\rangle\). The category
\(\mathcal{A}_X\) is also quasi-equivalent to the homotopy category of graded matrix
factorizations of the potential \(f\), where \(f\) is the defining equation for
\(X\).

\begin{theorem}
	In the above notation, if \(d\leq n\), then there is a decomposition
	\[
		\mathcal{D}[X/\mu_d] = \langle
		\mathcal{A}_X^{\mu_d},\mathcal{O}_X\otimes\chi^{0,\ldots,d-1},\ldots
		\mathcal{O}_X(d-n)\otimes \chi^{0,\ldots,d-1}\rangle
	\]
	where
	\[
		\mathcal{A}_X^{\mu_d} = \langle \mathcal{A}_{X_g},
		\mathcal{A}_{X_g}\otimes\chi,\ldots,
		\mathcal{A}_{X_g}\otimes\chi^{n-2}\rangle.
	\]
	where \(\mathcal{A}_{X_g}\) is viewed as a subcategory of
	\(\mathcal{D}[X/\mu_d]\) via \(\iota_{g\ast}\).
	\label{thm:kuznetsov-perry-cyclic-hypersurfaces}
\end{theorem}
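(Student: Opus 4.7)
The plan is to split the claim into an outer SOD for $\mathcal{D}[X/\mu_d]$ and an inner SOD for $\mathcal{A}_X^{\mu_d}$, and derive each from tools already developed in the paper.

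For the outer decomposition, I would start from Orlov's Fano SOD
\[
  \mathcal{D}(X) = \langle \mathcal{A}_X, \mathcal{O}_X, \mathcal{O}_X(-1), \ldots, \mathcal{O}_X(d-n)\rangle,
\]
which applies because $d \leq n$, and descend it to $\mathcal{D}[X/\mu_d] \cong \mathcal{D}(X)^{\mu_d}$. The $\mu_d$-action preserves every term: each $\mathcal{O}_X(k)$ carries a natural linearization, and $\mathcal{A}_X$ is $\mu_d$-stable because it is intrinsically defined as the right orthogonal of the line-bundle block. Passing to equivariant objects yields the outer SOD, with each line bundle contributing its $d$ distinct twists $\mathcal{O}_X(k)\otimes\chi^j$.

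For the inner decomposition, I would combine Theorem \ref{thm:sod-m1},
\[
  \mathcal{D}[X/\mu_d] = \langle \iota_{g\ast}\mathcal{D}(X_g)\otimes\chi,\ldots, \iota_{g\ast}\mathcal{D}(X_g)\otimes\chi^{d-1}, \pi^\ast\mathcal{D}(\mathbb{P}^{n-1})\rangle,
\]
with Orlov's SOD of $\mathcal{D}(X_g)$ (Fano when $d<n$, or the Calabi-Yau equivalence $\mathcal{D}(X_g) = \mathcal{A}_{X_g}$ when $d = n$) and Beilinson's SOD of $\mathcal{D}(\mathbb{P}^{n-1})$. The resulting refined decomposition then has to be rearranged, using mutations through the equivariant divisor sequence
\[
  0 \to \mathcal{O}_X(-1)\otimes\chi^{-1} \to \mathcal{O}_X \to \iota_{g\ast}\mathcal{O}_{X_g} \to 0
\]
and its twists, into the target form. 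Along the way, $\pi^\ast\mathcal{O}_{\mathbb{P}^{n-1}}(k) \cong \mathcal{O}_X(kd)$ for the cyclic cover $\pi$, which controls how the Beilinson line bundles contribute to the full block of twisted line bundles $\mathcal{O}_X(k)\otimes\chi^j$ on the left.

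The principal obstacle is the mutation bookkeeping: Theorem \ref{thm:sod-m1} directly yields only $d-1$ copies of $\mathcal{A}_{X_g}$, whereas the target SOD requires $n-1$. The missing $n-d$ copies must be assembled from the line-bundle residues $\iota_{g\ast}\mathcal{O}_{X_g}(k)\otimes\chi^i$ left inside each $\iota_{g\ast}\mathcal{D}(X_g)\otimes\chi^i$, by comparison (via the divisor sequence above) with the Beilinson line bundles coming from $\pi^\ast\mathcal{D}(\mathbb{P}^{n-1})$. Since the statement is essentially the cyclic-hypersurface specialization of the general machinery in \cite{kuznetsov-perry}, one could alternatively cite their result directly for this inner step rather than redoing the mutation analysis by hand.
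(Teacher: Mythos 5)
The paper does not actually prove this theorem; it is stated with the preface ``for completeness, we recall their result'' and attributed to \cite[\S 8.3]{kuznetsov-perry}, so your fallback of simply citing Kuznetsov--Perry is in fact exactly what the paper does. Your outer step (descending Orlov's Fano decomposition $\mathcal{D}(X)=\langle\mathcal{A}_X,\mathcal{O}_X,\ldots,\mathcal{O}_X(d-n)\rangle$ to $\mu_d$-equivariant objects, which fans each exceptional line bundle into its $d$ twists) is sound in principle.

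However, the ``principal obstacle'' you flag — needing $n-d$ additional copies of $\mathcal{A}_{X_g}$ beyond the $d-1$ that Theorem \ref{thm:sod-m1} supplies — cannot be overcome, because it signals an error in the statement as printed. Compute ranks of $K_0$: Theorem \ref{thm:sod-m1} together with Orlov's decomposition of $\mathcal{D}(X_g)$ gives $\operatorname{rank}K_0[X/\mu_d]=(d-1)\bigl(r+(n-d)\bigr)+n$ where $r=\operatorname{rank}K_0(\mathcal{A}_{X_g})$, whereas the displayed decomposition with $n-1$ copies of $\mathcal{A}_{X_g}$ and $(n-d+1)d$ line bundles has rank $(n-1)r+(n-d+1)d$; the difference is exactly $(n-d)r$. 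For $d<n$ and any $X_g$ with $\mathcal{A}_{X_g}$ nontrivial (e.g.\ a cubic surface in $\mathbb{P}^3$, where $r=8$) this is a strict mismatch, so no amount of mutation bookkeeping can produce the extra $n-d$ copies from the line-bundle residues. The likely source is a notational collision with \cite{kuznetsov-perry}, where their $n$ is the degree of the cyclic cover (our $d$): the exponent should read $\chi^{d-2}$, giving $d-1$ copies of $\mathcal{A}_{X_g}$, under which the $K_0$-count does balance exactly and your refine-and-mutate strategy starting from Theorem \ref{thm:sod-m1} becomes consistent (and matches the $d=n$ comparison the paper makes, where $d-1=n-1$).
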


\begin{remark}
 	This result does not apply when \(d>n\) because \(\pi\colon
	X\to\mathbb{P}^{n-1}\) is not a cyclic cover in the sense of
	\cite{kuznetsov-perry}. 
\end{remark}

When \(d = n\) the subcategory \(\mathcal{A}_{X_g}\) is all of
\(\mathcal{D}(X_g)\). Using the notation \(\mathcal{D}_g^i=
\iota_{g\ast}(\mathcal{D}(X_g))\otimes\chi^i\), the decomposition of Theorem
\ref{thm:kuznetsov-perry-cyclic-hypersurfaces} is
\[
	\mathcal{D}[X/\mu_d] = \langle \mathcal{D}_g^0, \mathcal{D}_g^1,
	\ldots, \mathcal{D}_g^{n-2}, \mathcal{O}_X\otimes \chi^{0,\ldots,n-1}\rangle.
\]
The decomposition of Theorem \ref{thm:sod-m1} is
\[
	\mathcal{D}[X/\mu_d] = \langle \mathcal{D}_g^1,\ldots,
	\mathcal{D}_g^{n-1}, \pi^\ast(\mathcal{D}(\mathbb{P}^{n-1}))\rangle.
\]
It follows that our decomposition agrees with theirs up to a twist by a
character.

\end{document}